\theoremstyle{plain}
\newtheorem{theorem}{Theorem}[section]
\newtheorem{lemma}[theorem]{Lemma}
\newtheorem{proposition}[theorem]{Proposition}
\newtheorem{corollary}[theorem]{Corollary}
\theoremstyle{definition}
\title[The profinite completion of multi-EGS groups]{The profinite completion of  multi-EGS groups}
\author[A. Thillaisundaram]{Anitha Thillaisundaram}
\address{Anitha Thillaisundaram: School of Mathematics and Physics, University of Lincoln,
Brayford Pool, Lincoln LN6 7TS, United Kingdom}
\email{anitha.t@cantab.net}
\author[J. Uria-Albizuri]{Jone Uria-Albizuri}
\address{Jone Uria-Albizuri: Department of Mathematics, University of the Basque Country UPV/EHU, 48940 Leioa, Spain}
 \email{jone.uria@ehu.eus}
\keywords{Branch groups, profinite groups, profinite completions, congruence subgroup property}
\subjclass[2010]{Primary  20E08;  Secondary 20E18}
\thanks{The first author acknowledges 
support from EPSRC, grant EP/T005068/1, and from the StayConnected Programme at the Heinrich-Heine-Universit\"{a}t; she thanks Heinrich-Heine-Universit\"{a}t for its hospitality. The second author acknowledges financial support from 
 the Spanish Government, grant MTM2014-53810-C2-2-P, and from the Basque Government, grant IT974-16 and the predoctoral grant PRE-2014-1-347. The second author is also supported by the Basque Government through the BERC 2018-2021 program, and by the Spanish Ministry of Science, Innovation and Universities: BCAM Severo Ochoa accreditation SEV-2017-0718.}
\date{\today}
\begin{document}
\begin{abstract} 
The class of multi-EGS groups is a generalisation of the well-known Grigorchuk-Gupta-Sidki (GGS-)groups.
Here we classify branch multi-EGS groups with the congruence subgroup property and determine the profinite completion of
all branch multi-EGS 
groups. Additionally our results show that branch multi-EGS groups are just infinite.
\end{abstract}

\maketitle

\section{Introduction}

Let $p$ be an odd prime throughout and let $T$ denote the $p$-adic tree. In the 1980s, Grigorchuk \cite{Grigorchuk}, Gupta 
and Sidki \cite{GuptaSidki} constructed subgroups of the automorphism group $\text{Aut}(T)$ that provided further, and more easily describable, examples of infinite
finitely generated torsion groups, cf. the Burnside problem. These so-called Grigorchuk-Gupta-Sidki groups, or GGS-groups for short, were some of the early examples of branch 
groups. The class of branch groups also contains finitely generated groups with other interesting properties, such as having intermediate 
word growth and being amenable but not elementary amenable \cite{growth}. Just infinite branch groups also  form a natural
partition of the class 
of just infinite groups \cite{Wilson}, where a just infinite group is an infinite group with every proper quotient 
being finite. 

Here we consider multi-EGS groups, which form a generalised family of GGS-groups that also contains Pervova's extended 
Gupta-Sidki (EGS) groups \cite{Pervova}. Pervova's EGS-groups were the first examples of finitely generated branch groups without the congruence 
subgroup property,  that is, when the profinite completion of the group differs from its closure in $\text{Aut}(T)$; see Section 2 for
definitions and details.
The multi-EGS groups were first defined in \cite{KT} 
(though there termed generalised multi-edge spinal groups) and a certain subfamily of them was known to have 
profinite completion differing from the closure in the congruence topology (cf. \cite[Thm.~1.4(3)]{KT}). 
In this paper, we classify the multi-EGS groups which
have the congruence subgroup property. Further we determine the profinite completion of the multi-EGS groups without 
the congruence subgroup property.

Briefly speaking, a multi-EGS group 
\begin{equation} \label{equ:def-gen-mult-ed-sp-gp} G = \big\langle \{a
  \} \cup \{ b^{(j)}_i \mid 1 \leq j \leq p, \, 1 \leq i \leq r_j \}
  \big\rangle,
\end{equation}
where  each $r_j\in \{0,1,\ldots, p-1\}$, is an infinite subgroup of the profinite
group $\mathrm{Aut}(T)$ that is generated by
\begin{itemize}
\item a rooted automorphism $a$ of order $p$,  which cyclically permutes the
  vertices $u_1,\ldots,u_p$ at the $1$st level of~$T$, and
\item families $\mathbf{b}^{(j)} = \{b^{(j)}_1, \ldots,
  b^{(j)}_{r_j}\}$, $j \in \{1,\ldots,p\}$, of directed automorphisms
  sharing a common directed path $P_j$ in~$T$.
\end{itemize}
We require the paths $P_1, \ldots, P_p$ to be mutually disjoint.
The restriction  $0 \leq r_j \leq p-1$, for all $j \in
\{1,\ldots,p\}$, is to ensure that none of the generators
are superfluous. As $G$ is infinite, there is at least one $j \in
\{1,\ldots,p\}$ with $r_j \not = 0$. Each non-empty family $\mathbf{b}^{(j)}$, $j \in \{1,\ldots,p\}$, is defined by a set of vectors 
$\mathbf{E}^{(j)}$, as elaborated in Section 2.

A multi-EGS group is  a
finitely generated, residually-(finite $p$) infinite group.  It is a fractal subgroup of $\mathrm{Aut}(T)$, and from \cite{KT} it is known to be just infinite when torsion. A
restricted subclass of multi-EGS groups was identified in \cite{KT} as being branch; see Section 2 for relevant terminology. As we shall see, this paper identifies all multi-EGS groups that are just infinite and, respectively, branch. It turns out that when a multi-EGS group $G$ is branch, it is furthermore regular branch over $G'$ or $\gamma_3(G)$. 

We  classify  multi-EGS groups $G$ that are branch over $G'$ (compare Proposition~\ref{classification}), and we show that a multi-EGS group $G$ is super strongly fractal if and only if it is branch (compare Proposition~\ref{pro:super}).

Let $\mathscr{E}$ denote the subclass of $3$-generator multi-EGS groups 
$\langle a,b^{(j)},  b^{(k)}\rangle$, for some distinct $j, k\in \{1,\ldots,p\}$, with the associated linearly independent symmetric defining vectors $(e_1,\ldots,e_{p-1})$ and $(f_1,\ldots,f_{p-1})$ satisfying the following condition: subject to replacing the generators $b^{(j)},  b^{(k)}$ with suitable powers, we have that
$e_i, f_i\in \{0,1\}$ and $e_i\ne f_i$ for all $1\le i\le p-1$. Our main results are as follows.

\begin{theorem}\label{MainTheorem}
Let
  $G$
  be a multi-EGS group as in (\ref{equ:def-gen-mult-ed-sp-gp}).
  
 \textup{(A)} Suppose that $G\not \in \mathscr{E}$ is regular branch over $G'$. 
  \begin{enumerate}
  \item[(A.1)] Then $G$ has the congruence subgroup property if and only if the defining vectors
  $\mathbf{E}^{(1)}, \ldots, \mathbf{E}^{(p)}$ are linearly independent. 
  \item[(A.2)] The profinite completion $\widehat{G}$ of $G$ is
 \[
 \widehat{G}=\varprojlim_{n\in \mathbb{N}} 
 G/\big(\psi_n^{-1}(G'\times \overset{\,\,\,p^n}{\cdots} \times G')\big).
 \]
 \end{enumerate}
 Here $\psi_n:\textup{St}_G(n)\longrightarrow G\times \overset{\,\,\,p^n}{\cdots}\times G$ is the natural map under the identification of subtrees rooted at any level $n$ vertex.

  \textup{(B)} Suppose that $G\in \mathscr{E}$ is regular branch over $G'$. 
  
  \begin{enumerate}
  \item[(B.1)]  Then $G$ does not have the congruence subgroup property.
  \item[(B.2)] The profinite completion $\widehat{G}$ of $G$ is
 \[
 \widehat{G}=\varprojlim_{n\in \mathbb{N}} 
 G/\big(\psi_n^{-1}(\gamma_3(G)\times \overset{\,\,\,p^n}{\cdots} \times \gamma_3(G))\big).
 \]
 \end{enumerate}
 
 \textup{(C)} Suppose that $G$ is regular branch over $\gamma_3(G)$ but not over $G'$. Then $G$ has the congruence subgroup property, and hence the profinite completion $\widehat{G}$ of $G$ is equal to the closure of $G$ in $\textup{Aut}(T)$.   
\end{theorem}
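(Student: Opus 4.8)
The plan is to compare three topologies on $G$: the \emph{profinite} topology (all finite-index subgroups), the \emph{congruence} topology (with neighbourhood basis $\textup{St}_G(n)$ and completion $\overline{G}=\varprojlim_n G/\textup{St}_G(n)$, the closure of $G$ in $\textup{Aut}(T)$), and the \emph{branch} topology with neighbourhood basis $N_n:=\psi_n^{-1}\big(K\times\overset{\,\,\,p^n}{\cdots}\times K\big)$, where $K=G'$ in case (A) and $K=\gamma_3(G)$ in cases (B) and (C). One always has $(\text{congruence})\subseteq(\text{branch})\subseteq(\text{profinite})$, the first inclusion because $N_n\le\textup{St}_G(n)$ and the second because $N_n$ has finite index in $G$. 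Thus (A.2) and (B.2) assert exactly that the branch topology equals the profinite one, while the congruence subgroup property asserts that the congruence topology equals the profinite one. Once the former equality is known, the latter is equivalent to $K$ being a congruence subgroup: if $\textup{St}_G(\ell)\le K$ then $\textup{St}_G(n+\ell)\le N_n$, so the branch topology lies in the congruence topology and, being already equal to the profinite one, coincides with it; conversely the congruence subgroup property makes the finite-index subgroup $K$ a congruence subgroup.

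The first step is to prove that the branch topology is the profinite one, i.e.\ that every finite-index subgroup of $G$ contains some $N_n$. As $G$ is just infinite and branch --- both established earlier in the paper --- every non-trivial normal subgroup $N\trianglelefteq G$ contains $\textup{Rist}_G(n)'$ for some $n$; and since $K$ is contained in every rigid vertex stabiliser (because $G$ is regular branch over $G'$ in cases (A), (B) and over $\gamma_3(G)$ in case (C), and $\gamma_3(G)\le G'$), we get $N_n\le\textup{Rist}_G(n)$ and hence $N\ge N_n'=\psi_n^{-1}\big((K')^{p^n}\big)$. Since $\psi_n$ maps $N_{n+\ell}$ onto $(N_\ell)^{p^n}$, it now suffices to prove the containment
\[
N_\ell=\psi_\ell^{-1}\big(K\times\overset{\,\,\,p^\ell}{\cdots}\times K\big)\ \le\ K'\qquad\text{for some }\ell\ge 1,
\]
which then gives $N_{n+\ell}\le\psi_n^{-1}\big((K')^{p^n}\big)\le N$. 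This containment, checked by an explicit computation with sections of elements of $G$, is the crux of the argument: it uses the symmetry of the defining vectors essentially, and --- this is exactly why the class $\mathscr{E}$ must be separated out --- for groups in $\mathscr{E}$ it fails when $K=G'$, so that $G'$ cannot be used in (B.2), but still holds when $K=\gamma_3(G)$.

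Granting that the branch topology is the profinite one, it remains to decide when $K$ is a congruence subgroup. For (A.1), with $G\notin\mathscr{E}$ and $K=G'$, I would read this off from the quotient $G/G'$ as encoded by the vectors $\mathbf{E}^{(1)},\dots,\mathbf{E}^{(p)}$: their linear independence is precisely the condition forcing $\textup{St}_G(\ell)\le G'$ for a small fixed $\ell$, hence the congruence subgroup property, whereas a linear dependence yields (by Pervova's technique) an epimorphism from $\textup{St}_G(1)$, hence from a finite-index subgroup of $G$, onto $\mathbb{Z}/p\mathbb{Z}$ whose kernel contains no level stabiliser, so the property fails. For (B.1), the complementary $\{0,1\}$-valued vectors of a group in $\mathscr{E}$ are used to construct an analogous non-congruence finite quotient, so $\gamma_3(G)$ is not a congruence subgroup; (B.2) is then already provided by the first step. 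For (C), Proposition~\ref{classification} together with the classification of branch multi-EGS groups reduces the groups that are regular branch over $\gamma_3(G)$ but not over $G'$ to a short explicit list, and for each of them a direct calculation gives $\textup{St}_G(\ell)\le\gamma_3(G)$ for a suitable $\ell$; combined with the equality of the $\gamma_3(G)$-branch and profinite topologies, this yields the congruence subgroup property, so $\widehat{G}=\overline{G}$.

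The main obstacle is the displayed containment $N_\ell\le K'$ underpinning the first step: elementary in nature but delicate, it is the point at which the symmetry of the defining vectors is genuinely exploited and at which the exceptional class $\mathscr{E}$ forces the passage from $G'$ down to $\gamma_3(G)$. A secondary difficulty is the explicit construction of the non-congruence finite quotients in (A.1) and (B.1) together with the verification that their kernels meet no $\textup{St}_G(m)$, and ensuring that the classification used for (C) is exhaustive.
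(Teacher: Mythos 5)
Your three-topology framework is sound, and your mechanism for the completion statements is genuinely different from the paper's: you invoke the standard fact that a non-trivial normal subgroup $N$ of a level-transitive branch group contains $\textup{Rist}_G(n)'$ for some $n$, whereas the paper proves a bespoke and much stronger statement (Proposition~\ref{Proposition 2.1}: the normal closure of any non-trivial element has a vertex $v$ with $\textup{St}_N(v)_v=G$, established by a delicate induction on word length) and then extracts $\gamma_3(G)=[G,G']$ in the section via $[\textup{St}_N(v),\textup{Rist}_G(v)]\le\textup{Rist}_N(v)$. Your route is viable and arguably cleaner, but only if the containment you flag as the crux is actually supplied, and there your proposal has real problems. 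First, even in case (A) the containment $N_2\le G''$ is not a routine ``computation with sections'': it is the conjunction of $\textup{St}_G(1)'\le\gamma_3(G)$ (Proposition~\ref{key}, a long case analysis which is precisely where symmetric vectors and the class $\mathscr{E}$ intervene) and $\psi_1(G'')\ge\gamma_3(G)\times\cdots\times\gamma_3(G)$ (Lemma~\ref{second_derived}); none of this is done in your proposal. Second, in case (B) the containment you actually need in your setup, $M_\ell\le\gamma_3(G)'$ for $G\in\mathscr{E}$, is not accessible by the same means, because Proposition~\ref{key} fails for $\mathscr{E}$ by design (Lemma~\ref{lem:exception}); I do not see how your ``explicit computation'' would establish it. The fix is to use more of the rigid stabilisers than you do: since $G\in\mathscr{E}$ is still regular branch over $G'$, one has $\textup{Rist}_G(u)\ge\psi_1^{-1}(1\times\cdots\times G'\times\cdots\times 1)$, hence $N\ge\textup{Rist}_G(n)'\ge\psi_n^{-1}(G''\times\cdots\times G'')$, and then $M_1=\psi_1^{-1}(\gamma_3(G)\times\cdots\times\gamma_3(G))\le G''$ by Lemma~\ref{second_derived} gives $N\ge M_{n+1}$. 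As written, your reduction to $N_\ell\le K'$ with $K'=\gamma_3(G)'$ is the wrong target.

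The remaining parts are asserted rather than proved, and they carry the other half of the paper's work. For the ``only if'' of (A.1) and for (B.1) you need explicit elements witnessing that $G'$, respectively $\gamma_3(G)$, meets every $\textup{St}_G(n)$ in a coset of a non-trivial element of the quotient; for (B.1) this requires knowing $[b^{(j)},b^{(k)}]\notin\gamma_3(G)$ for $G\in\mathscr{E}$, which is itself a non-trivial computation inside $G''/\gamma_3(\textup{St}_G(1))$ (Lemma~\ref{lem:exception}) that your sketch does not identify. For (C), the groups that are regular branch over $\gamma_3(G)$ but not over $G'$ do not form ``a short explicit list'' — by Proposition~\ref{classification} they are the infinite family of multi-EGS groups whose defining vectors are all scalar multiples of a single non-constant symmetric vector, with each family a singleton — and the inclusion $\textup{St}_G(5)\le\gamma_3(G)$ for all of them is a substantial uniform argument, not a case check. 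In short: the architecture is correct and the Grigorchuk-lemma shortcut is a genuine improvement over Proposition~\ref{Proposition 2.1} for parts (A.2) and (B.2), but every quantitative input ($\textup{St}_G(1)'\le\gamma_3(G)$, $\psi_1(G'')\ge\gamma_3(G)^{\times p}$, $G'\ge\textup{St}_G(r+1)$, $\textup{St}_G(5)\le\gamma_3(G)$, and the two non-congruence constructions) is missing, and the one containment you do single out for case (B) is misidentified.
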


The results  of parts (B.1) and (C) are rather unexpected, when compared to part (A.1). The proofs of (A.1), (B.1) and (C) use a similar strategy as was done for the GGS-groups in \cite{FZ, FAGUA, AlejJone}  and for the EGS-groups in~\cite{Pervova}, though there are instances of new methods and ideas. The proof of (A.2)  generalises the 
techniques used in \cite{Pervova}, where the corresponding result was given for torsion EGS-groups.

We further have:
\begin{corollary}\label{justinfinite}
 Let
  $G$ be a multi-EGS group that is
   (regular) branch. Then $G$ is just infinite.
 \end{corollary}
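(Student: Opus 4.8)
The plan is to derive just infiniteness of a (regular) branch multi-EGS group $G$ from the structural results already obtained, combined with the standard fact that a (regular) branch group whose rigid level stabilisers have finite abelianisation — or more precisely, whose proper quotients are forced to be finite — is just infinite. Concretely, I would invoke the theorem of Grigorchuk (see Wilson and Grigorchuk's work on just infinite branch groups) that a branch group $G$ is just infinite if and only if every rigid level stabiliser $\mathrm{Rist}_G(n)$ has the property that $\mathrm{Rist}_G(n)/\mathrm{Rist}_G(n)'$ is finite; equivalently, one shows directly that every nontrivial normal subgroup of $G$ has finite index.

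The key steps, in order, are as follows. First, by the discussion preceding Theorem~\ref{MainTheorem}, a branch multi-EGS group $G$ is regular branch over $K$, where $K = G'$ or $K = \gamma_3(G)$. Second, I would recall the standard criterion: if $G$ is regular branch over $K$, then every nontrivial normal subgroup $N \trianglelefteq G$ contains $K' \times \cdots \times K'$ (a product over all vertices at some level $n$), via the commutator trick — take $1 \ne g \in N$, find a level $n$ at which $g$ acts nontrivially on the tree or has nontrivial section, and use that $[g, \mathrm{Rist}_G(n)]$ lies in $N$ and projects onto a nontrivial normal subgroup of a single-coordinate copy, then use the subdirect/fractal structure to fill out all coordinates. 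Third, since $K$ has finite index in $G$ (both $G'$ and $\gamma_3(G)$ do, as $G$ is finitely generated and $G/G'$, $G/\gamma_3(G)$ are finitely generated abelian-by-abelian... in fact finite because $G/G'$ is finite: $a$ has order $p$ and the $b$'s have order dividing a power of $p$ modulo $G'$ — more carefully, $G/G'$ is a finitely generated abelian group, and one checks it is finite), the subgroup $\psi_n^{-1}(K' \times \cdots \times K')$ has finite index in $\mathrm{St}_G(n)$, hence in $G$. Therefore $N$ has finite index, which is exactly just infiniteness.

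I expect the main obstacle to be the bookkeeping needed to guarantee that $K = G'$ or $K = \gamma_3(G)$ has \emph{finite} index in $G$, and that $K'$ has finite index in $K$ — i.e. that the quotients $G/K$ and $K/K'$ are finite rather than merely abelian. For $G/G'$ one must use the explicit generators and relations of multi-EGS groups to see the abelianisation is a finite $p$-group (the directed generators $b_i^{(j)}$ have finite order modulo $G'$ because their first-level sections are products of powers of $a$ and of conjugates of directed generators, forcing $p$-torsion in $G/G'$); a cleaner route is to note $G$ is residually-(finite $p$) and finitely generated, but that alone does not give finiteness of $G/G'$, so one genuinely needs the structural input. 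Once finiteness of $G/K$ is in hand, finiteness of $K/K'$ follows since $K$ is itself finitely generated (finite index in a finitely generated group) and, being a subgroup of a multi-EGS group, its abelianisation is again a finitely generated $p$-group which one argues is finite — or, more efficiently, one observes directly that $\psi_n^{-1}(K' \times \cdots \times K')$ contains a congruence subgroup $\mathrm{St}_G(m)$ for suitable $m$ when $G$ has the congruence subgroup property, and in the remaining cases (B) one still has $K' \times \cdots \times K' \supseteq \mathrm{St}_G(m)'$-type containments that force finite index. The cleanest proof therefore runs: $G$ regular branch over $K \in \{G', \gamma_3(G)\}$ with $[G:K] < \infty$; every $1 \ne N \trianglelefteq G$ contains some $\psi_n^{-1}(K' \times \cdots \times K')$; and this subgroup has finite index in $G$; hence $G$ is just infinite.
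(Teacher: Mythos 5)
Your proposal is correct in outline, but it takes a genuinely different route from the paper. The paper deduces the corollary as an immediate consequence of machinery it has already built for other purposes: for the groups with the congruence subgroup property it simply cites \cite[Prop.~2.4]{FAGUA}, and for the remaining groups (those regular branch over $G'$) it invokes Theorem~\ref{Theorem 3.2}, which shows that every non-trivial normal subgroup contains some $M_n=\psi_n^{-1}(\gamma_3(G)\times\cdots\times\gamma_3(G))$; that theorem in turn rests on the length-reduction result Proposition~\ref{Proposition 2.1} (producing a vertex $v$ with $\mathrm{St}_N(v)_v=G$) followed by the observation $[\mathrm{St}_N(v),\mathrm{Rist}_G(v)]\le\mathrm{Rist}_N(v)$. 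You instead run the classical Grigorchuk criterion uniformly in all cases: every $1\ne g\in N$ moves some vertex $u$ (by faithfulness of the action --- note that this is the only case you need, so the ``or has nontrivial section'' alternative in your step two, whose mechanism is shakier, can be dropped), the double-commutator trick gives $\mathrm{Rist}_G(u)'\le N$, transitivity on levels gives $N\ge\psi_n^{-1}(K'\times\cdots\times K')$ for $K\in\{G',\gamma_3(G)\}$, and finiteness of $[G:K']$ finishes the argument. Your approach buys uniformity and independence from Theorem~\ref{Theorem 3.2} and Proposition~\ref{Proposition 2.1}, at the cost of a weaker conclusion about $N$ (containment of $K'\times\cdots\times K'$ rather than of $\gamma_3(G)\times\cdots\times\gamma_3(G)$ or $G'\times\cdots\times G'$, which the paper needs anyway for the profinite completion); the paper's route gets the corollary essentially for free from results it must prove regardless. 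You are right to flag that the finiteness of $G/K$ and $K/K'$ is the point requiring structural input: $G/G'\cong C_p^{\,1+r}$ by \cite[Prop.~3.9]{KT}, $G'/\gamma_3(G)$ is elementary abelian of finite rank, and the containments $\psi_1(G'')\ge\gamma_3(G)\times\cdots\times\gamma_3(G)$ and $\psi_1(\gamma_3(G)')\ge\gamma_4(G)\times\cdots\times\gamma_4(G)$ established in the paper give $[G:K']<\infty$; this step is genuinely necessary, since for the constant-vector groups of Theorem~\ref{constant} the analogous abelianisation is infinite and the group fails to be branch.
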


It was shown in \cite{FAGUA} that the GGS-group defined by the constant vector is the  classical GGS-group without the 
congruence subgroup property. 
Now any multi-EGS group $G$ defined by only the constant vector is excluded from Theorem~\ref{MainTheorem} because it is not  branch:

\begin{theorem}\label{constant}
 Let $G$ be a multi-EGS group with constant defining vector. Then $G$ is weakly regular branch but not branch.
\end{theorem}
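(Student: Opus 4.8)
The plan is to treat the two assertions separately: the non-branch statement is essentially a corollary of the classification results already established, while the weakly-regular-branch statement carries the real work. Throughout, $G$ is a multi-EGS group all of whose defining vectors $\mathbf{E}^{(1)},\dots,\mathbf{E}^{(p)}$ consist of the constant vector $(1,\dots,1)$ (or nonzero multiples of it), and I use that $G'\le\mathrm{St}_G(1)$, that $G/\gamma_3(G)$ is a finite $p$-group, and that a branch multi-EGS group is regular branch over $G'$ or over $\gamma_3(G)$. The first thing I would record is the \emph{basic recursion}: because each defining vector is constant, $\psi_1(b^{(j)}_i)$ equals, up to the cyclic shift determined by $P_j$, the tuple $(a,\dots,a,b^{(j)}_i)$, and differentiating this yields
\[
\psi_1\bigl([a,b^{(j)}_i]\bigr)=\bigl(g_{j,i},1,\dots,1,g_{j,i}^{-1}\bigr)\qquad(\text{up to the same cyclic shift}),
\]
where $g_{j,i}=(b^{(j)}_i)^{-1}a$ has infinite order (which is why such $G$ is not periodic). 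Every subsequent computation is built from this identity and from the $\psi_1$-images of conjugates of the $[a,b^{(j)}_i]$.

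\emph{Not branch.} Since a branch multi-EGS group is regular branch over $G'$ or over $\gamma_3(G)$, it is enough to exclude both. By Proposition~\ref{classification} a multi-EGS group with constant defining vectors is not branch over $G'$, and the corresponding analysis for $\gamma_3(G)$ shows it is not regular branch over $\gamma_3(G)$ either; hence $G$ is not branch. I would also indicate why the obstruction is visible near the top of the lower central series: using the shape of $\psi_1([a,b])$, any element of $\psi_1(\mathrm{St}_G(1))$ all of whose coordinates lie in $G'$ has those coordinates multiplying into $\gamma_3(G)$ (the relevant product map modulo $\gamma_3(G)$ is governed by an alternating form on $\mathrm{St}_G(1)^{\mathrm{ab}}$ which the constancy of the vectors forces to vanish), so that $G'\times1\times\dots\times1\not\le\psi_1(\mathrm{St}_G(1))$, and similarly one level deeper for $\gamma_3(G)$.

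\emph{Weakly regular branch.} I would first produce, for each first-level vertex, a nontrivial element of its rigid stabiliser; such elements only appear deep in the derived series. Explicitly, for a suitable conjugating word $w$ — for instance $w=a^{p-1}b^{(j)}_i$ — the commutator $\bigl[[a,b^{(j)}_i],[a,b^{(j)}_i]^{w}\bigr]$ has $\psi_1$-image supported on the single first-level subtree picked out by $P_j$, with section $\bigl[g_{j,i}^{-1},g_{j,i}^{\,b^{(j)}_i}\bigr]\in G''$. Conjugating by powers of $a$ places such an element in every first-level rigid stabiliser, so $\mathrm{Rist}_G(1)\neq1$, and iterating the construction inside these rigid stabilisers gives $\mathrm{rist}_G(v)\neq1$ for every vertex $v$, so $G$ is weakly branch. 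To upgrade this to weakly regular branch I would take for $K$ the subgroup generated by all these single-coordinate elements and their $G$-conjugates, and verify $\psi_1(K)\supseteq K\times\dots\times K$; by the previous paragraph $K$ is necessarily of infinite index in $G$. This shows $G$ is weakly regular branch over $K$ but, as just proved, not branch.

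\emph{Main obstacle.} The delicate part is the weakly-regular-branch step. Since the single-coordinate elements are forced (by the non-branch argument) to lie in $\gamma_3(G)$, indeed in $G''$, both the construction that produces a nontrivial element of $\mathrm{rist}_G(v)$ for \emph{every} $v$ and the verification that $K$ genuinely satisfies $\psi_1(K)\supseteq K\times\dots\times K$ require careful bookkeeping through the derived and lower central series; in particular, showing $\bigl[g_{j,i}^{-1},g_{j,i}^{\,b^{(j)}_i}\bigr]\neq1$ and that the recursion defining $K$ does not degenerate is where the argument is least mechanical. The non-branch direction, by contrast, is essentially immediate from the branch classification already in place.
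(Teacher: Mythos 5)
The ``not branch'' half of your argument has a genuine gap. You deduce non-branchness from the claim that a branch multi-EGS group must be regular branch over $G'$ or over $\gamma_3(G)$, but in this paper that dichotomy is itself a \emph{consequence} of Theorem~\ref{constant}: all groups outside $\mathscr{G}$ are already regular branch over $\gamma_3(G)$ by Lemma~\ref{notderivedsubgroup}, and the content of the dichotomy is precisely that the groups in $\mathscr{G}$ are not branch, so your argument is circular. More fundamentally, failing to be \emph{regular} branch over $G'$ and over $\gamma_3(G)$ does not rule out being branch: branchness only asks that $\mathrm{Rist}_G(n)$ have finite index, and a priori the rigid stabilisers could be large without containing a geometrically nested copy of either of those two subgroups. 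Your alternating-form observation likewise only obstructs $G'\times 1\times\cdots\times 1\le\psi_1(\mathrm{St}_G(1))$. What is actually needed --- and what the paper does --- is to pin down a rigid stabiliser exactly: setting $K=\langle b^{(j)}a^{-1}\mid j\in J\rangle^G$ (an index-$p$ subgroup containing $G'$), one shows $\mathrm{Rist}_{G'}(1)=\psi_1^{-1}(K'\times\cdots\times K')$, and then that $K/K'$ is infinite (via $K'\cap K_j=K_j'$ and the torsion-freeness of $K_j/K_j'$ inherited from the GGS case), whence $|G':\mathrm{Rist}_{G'}(1)|$ is infinite.

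The ``weakly regular branch'' half of your proposal is a plausible strategy, but the decisive step is left undone: you construct single-coordinate elements of the form $\psi_1^{-1}\bigl((1,\dots,1,[g^{-1},g^{a}])\bigr)\in G''$ and propose to take $K$ to be their normal closure, but the verification that $\psi_1(K)\supseteq K\times\cdots\times K$ (and even that these commutators are nontrivial) is exactly where the content lies, and you explicitly defer it. The paper instead works with the explicit subgroup $K'$ for $K$ as above: Lemma~\ref{structure}(iii) establishes $K'\times\cdots\times K'\le\psi_1(K')\le\psi_1(G')\le K\times\cdots\times K$ by direct computation with the generators $y^{(j)}_i=(b^{(j)}a^{-1})^{a^i}$, which gives weak regular branching over $K'$; the same subgroup $K'$ then drives the non-branch argument. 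Both halves of the theorem are thus proved with a single object, rather than with two unrelated constructions as in your outline.
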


Lastly, based on the work of Lavreniuk and Nekrashevych~\cite{LN}, we show that, for $G$ a branch multi-EGS group, every automorphism of $G$ is induced by conjugation in $\text{Aut}(T)$.
\begin{theorem}\label{Aut G}
Let $G$ be a branch multi-EGS group. Then $\textup{Aut}(G)=N_{\textup{Aut}(T)}(G)$.
\end{theorem}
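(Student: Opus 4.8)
The strategy is to apply the Lavreniuk–Nekrashevych machinery \cite{LN}, which says that for a suitable class of branch groups $G$ acting on a rooted tree, every automorphism of $G$ extends to an automorphism of $\operatorname{Aut}(T)$ (in fact is induced by conjugation by an element of $\operatorname{Aut}(T)$ normalising $G$). The inclusion $N_{\operatorname{Aut}(T)}(G)\le\operatorname{Aut}(G)$ is automatic via the conjugation action, once one checks that no non-trivial element of $\operatorname{Aut}(T)$ centralises $G$; since a branch multi-EGS group acts level-transitively and, being branch, has trivial centraliser in $\operatorname{Aut}(T)$ (a standard fact for branch groups, or one can use that $G$ is weakly branch and invoke \cite[the centraliser lemma]{LN}), the map $N_{\operatorname{Aut}(T)}(G)\to\operatorname{Aut}(G)$ is injective. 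So the whole content is the reverse inclusion $\operatorname{Aut}(G)\subseteq N_{\operatorname{Aut}(T)}(G)$.

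For the reverse inclusion I would verify that a branch multi-EGS group $G$ satisfies the hypotheses of the relevant theorem of Lavreniuk–Nekrashevych. The key structural inputs needed are: (i) $G$ is a branch group (given here, by the classification discussed in the excerpt, $G$ is regular branch over $G'$ or over $\gamma_3(G)$); (ii) $G$ is a \emph{saturated} or otherwise sufficiently rigid subgroup of $\operatorname{Aut}(T)$ — concretely, one wants that the rigid stabilisers $\operatorname{rist}_G(v)$ are large enough that the tree structure is canonically recoverable from the abstract group, which follows from $G$ being regular branch together with fractalness; and (iii) the rooted tree $T$ is the $p$-adic tree with $p$ odd, so all levels look alike. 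Under these hypotheses \cite{LN} gives that $\operatorname{Aut}(G)$ embeds into $\operatorname{Aut}(T)$ as the normaliser $N_{\operatorname{Aut}(T)}(\overline{G})$, and one then notes $N_{\operatorname{Aut}(T)}(\overline{G})=N_{\operatorname{Aut}(T)}(G)$ because the closure is $G$-equivariantly determined by $G$ (the branch structure passes to the closure and the normaliser of a dense subgroup of a closed branch group equals the normaliser of its closure). Combining the two inclusions yields $\operatorname{Aut}(G)=N_{\operatorname{Aut}(T)}(G)$.

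Concretely the proof would run: first, cite that $G$ is regular branch over $K\in\{G',\gamma_3(G)\}$ and is fractal (and super strongly fractal by Proposition~\ref{pro:super}); second, record that $G$ is weakly branch so $C_{\operatorname{Aut}(T)}(G)=1$, giving injectivity of $N_{\operatorname{Aut}(T)}(G)\hookrightarrow\operatorname{Aut}(G)$; third, quote the precise theorem from \cite{LN} — something to the effect that for a branch group $G$ whose rigid stabilisers satisfy a non-degeneracy/saturation condition, $\operatorname{Aut}(G)=N_{\operatorname{Aut}(T)}(\overline{G})$ — and check the hypothesis using the regular branch property; fourth, identify $N_{\operatorname{Aut}(T)}(\overline{G})$ with $N_{\operatorname{Aut}(T)}(G)$. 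The main obstacle I anticipate is step three: verifying precisely which hypothesis of \cite{LN} is in force and that a branch multi-EGS group meets it — in particular one may need the fact that distinct multi-EGS groups on $T$ are not conjugate in $\operatorname{Aut}(T)$ in pathological ways, or that the "branch action" of $G$ on $T$ is unique up to the obvious symmetries, which is where the specific combinatorics of the defining vectors $\mathbf{E}^{(j)}$ (and the exclusion of the constant-vector, non-branch case by Theorem~\ref{constant}) would enter.
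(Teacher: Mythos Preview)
Your high-level strategy matches the paper's: apply the Lavreniuk--Nekrashevych theorem \cite[Thm.~7.5]{LN}, which reduces the statement to showing that $G$ is \emph{saturated}. However, your proposal stops short of actually verifying saturation --- you gesture at ``regular branch together with fractalness'' and list it as the anticipated obstacle, but you never construct the required characteristic subgroups or prove they have level-transitive sections. That is the entire content of the proof, and it does not follow from branchness and fractalness alone in any obvious way.

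The paper's verification is short and concrete. Set $H_0=G$ and $H_{n+1}=H_n'$ (when $G$ is regular branch over $G'$) or $H_{n+1}=\gamma_3(H_n)$ (otherwise). These are characteristic by construction. The point is that Lemma~\ref{lem:subdirect} says $\psi_1(G')$ (respectively $\psi_1(\gamma_3(G))$) is subdirect in $G\times\cdots\times G$, so $\varphi_v(G')=G$ for each first-level vertex $v$; iterating, $\varphi_v(H_n)=G$ for every $n$th-level vertex $v$, hence the sections act spherically transitively and $H_n\le\mathrm{St}_G(n)$. This is exactly the saturation condition.

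Two further points. First, your step four (passing from $N_{\mathrm{Aut}(T)}(\overline{G})$ to $N_{\mathrm{Aut}(T)}(G)$) is unnecessary: Theorem~7.5 of \cite{LN} gives $\mathrm{Aut}(G)=N_{\mathrm{Aut}(T)}(G)$ directly once saturation is known, with no closure appearing. Second, the obstacles you anticipate --- combinatorics of the defining vectors, uniqueness of the branch action --- play no role; the argument is uniform across all branch multi-EGS groups and uses only Lemma~\ref{lem:subdirect}.
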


\emph{Organisation.} Section 2 of this paper consists of background material for branch groups and multi-EGS groups. Section 3 contains 
preliminary results, the classification of multi-EGS groups that are regular branch over $G'$,  the proof that all multi-EGS groups are super strongly fractal and the proof of Theorem~\ref{Aut G}. In Section 4 we prove parts (A.1), (B.1) and (C) of Theorem \ref{MainTheorem}. In Section 5 we 
prove parts (A.2), (B.2) and  Corollary \ref{justinfinite}. In the final section, we prove Theorem \ref{constant}.

\medskip

\textbf{Acknowledgements.} We gratefully acknowledge the initial involvement of A.~Garrido in the project, and 
we especially thank G.\,A. Fern\'{a}ndez-Alcober for his useful feedback. Furthermore we are grateful to J.~Button,  
\c{S}.~G\"{u}l, B.~Klopsch, B.~Kuckuck, M.~Noce, K.~Rajeev and M.~Vannacci for helpful conversations and we  thank the referee for the helpful comments.

\section{Background material}

 Let $T$ be the $p$-adic tree,
  meaning all vertices   have $p$ children and there is a distinguished vertex called the root.  Using the
  alphabet $X = \{1,2,\ldots,p\}$, the vertices $u_\omega$ of $T$ are
  labelled bijectively by elements $\omega$ of the free
  monoid~$X^*$ in the following natural way.  The root of~$T$
  is labelled by the empty word~$\varnothing$, and for each word
  $\omega \in X^*$ and letter $x \in X$ there is an edge
  connecting $u_\omega$ to~$u_{\omega x}$.  We say
  that $u_\omega$ precedes $u_\lambda$, or equivalently that $u_\lambda$
  succeeds $u_\omega$, when $\omega$ is a prefix of $\lambda$.

 We recall the natural length function on~$X^*$: the words
  $\omega$ of length $\lvert \omega \rvert = n$, which we denote by $X^n$, represent the vertices
  $u_\omega$ that are at distance $n$ from the root. These vertices are called the $n$th
  level vertices and constitute the \textit{$n$th layer} of the tree.
The \textit{boundary} $\partial T$, whose elements correspond
  naturally to infinite simple rooted paths, is in one-to-one correspondence
  with the $p$-adic integers.

 Let $u$ be a vertex of $T$. We denote by $T_u$ the full rooted subtree of $T$ that has  $u$ as its root and includes all vertices succeeding~$u$.  For any
  two vertices $u = u_\omega$ and $v = u_\lambda$, the map
  $u_{\omega \tau} \mapsto u_{\lambda \tau}$, induced by replacing the
  prefix $\omega$ by $\lambda$, yields an isomorphism between the
  subtrees $T_u$ and~$T_v$.  The subtree
  rooted at a generic vertex of level $n$ will be denoted by $T_n$.

  Clearly every $f\in \mathrm{Aut}(T)$ fixes the root and the orbits of
  $\mathrm{Aut}(T)$ on the vertices of  $T$ are precisely its
  layers. We denote the image of a vertex $u$ under
  $f$  by~$u^f$.  The automorphism $f$ induces a faithful action
  on~$X^*$ given by
  $(u_\omega)^f = u_{\omega^f}$.  For $\omega \in X^*$ and
  $x \in X$ we have $(\omega x)^f = \omega^f x'$, for $x' \in X$ 
  uniquely determined by $\omega$ and~$f$.  This induces a permutation
  $f(\omega)$ of $X$ which satisfies
  \[
  (\omega x)^f = \omega^f x^{f(\omega)}, \quad \text{and consequently}
  \quad   (u_{\omega x})^f = u_{\omega^f x^{f(\omega)}}.
  \]
 We say that the automorphism $f$ is \textit{rooted} if $f(\omega) = 1$ for
  $\omega \ne \varnothing$.  It is \textit{directed}, with directed
  path $\ell \in \partial T$, if the support
  $\{ \omega \mid f(\omega) \ne 1 \}$ of its labelling is infinite and
  contains only vertices at distance $1$ from the set of
    vertices corresponding  to the path~$\ell$.
    
When convenient, we do not differentiate between $X^*$ and vertices of~$T$, that is, we do not distinguish between $u_\omega$ and $\omega$, and simply refer to $\omega$ as a vertex of~$T$. For an automorphism~$f$ of~$T$, the \textit{section} of $f$ at a vertex $u$ is the unique automorphism
$f_u$ of $T \cong T_{|u|}$ given by the condition $(uv)^f = u^f
v^{f_u}$ for $v \in X^*$.       
       
    \subsection{Subgroups of $\mathrm{Aut}(T)$}
Let $G$ be a subgroup of $\mathrm{Aut}(T)$. The
\textit{vertex stabiliser} $\mathrm{St}_G(u)$ is the subgroup
consisting of elements in $G$ that fix the vertex~$u$.  For
$n \in \mathbb{N}$, the \textit{$n$th level stabiliser}
  $\mathrm{St}_G(n)= \bigcap_{\lvert \omega \rvert =n}
  \mathrm{St}_G(u_\omega)$
is the subgroup consisting of automorphisms that fix all vertices at
level~$n$.  Let $T_{[n]}$ be the finite subtree of $T$ on
vertices up to level~$n$. Then  $\mathrm{St}_G(n)$ is equal to
the kernel of the induced action of $G$ on $T_{[n]}$.

The full automorphism group 
\[
\mathrm{Aut}(T)= \varprojlim_{n\to\infty} \mathrm{Aut}(T_{[n]})
\]
 is a profinite group, where the topology of $\mathrm{Aut}(T)$ is defined by the open subgroups
$\mathrm{St}_{\mathrm{Aut}(T)}(n)$ for $n \in \mathbb{N}$.  A
subgroup $G$ of $\mathrm{Aut}(T)$ has the \textit{congruence subgroup
  property} if for every subgroup $H$ of finite index in $G$, there
exists some $n$ such that $\mathrm{St}_G(n)\subseteq H$, and we say that $H$ is a \emph{congruence subgroup}. In other words, if the closure of $G$ in $\mathrm{Aut}(T)$ is the same as the profinite completion of $G$.  A weaker version of the congruence subgroup property is the $p$-\emph{congruence subgroup property} for a prime $p$:  a subgroup $G$ of $\mathrm{Aut}(T)$ has the $p$-congruence subgroup
  property if for every subgroup $H$ of finite $p$-power index in $G$, there
exists some $n$ such that $\mathrm{St}_G(n)\subseteq H$ (compare \cite{GUA2}).

Each $g\in \mathrm{St}_{\mathrm{Aut}(T)} (n)$ can be 
   completely determined in terms of its restrictions to the subtrees
  rooted at vertices at level~$n$.  There is a natural
  isomorphism
\[
\psi_n \colon \mathrm{St}_{\mathrm{Aut}(T)}(n) \rightarrow
\prod\nolimits_{\lvert \omega \rvert = n} \mathrm{Aut}(T_{u_\omega})
\cong \mathrm{Aut}(T) \times \overset{p^n}{\cdots} \times
\mathrm{Aut}(T).
\]

Let  $\omega\in X^*$ be of length $n$. We further define 
\[
\varphi_\omega :\mathrm{St}_{\mathrm{Aut}(T)}(u_\omega) \rightarrow \mathrm{Aut}(T_{u_\omega}) \cong \mathrm{Aut}(T)
\]
to be the natural restriction to $T_{u_\omega}$. For $H\le \text{St}_{\text{Aut}(T)}(u_\omega)$, we sometimes write $H_w=\varphi_w(H)$. 

A group $G \leq \text{Aut}(T)$ is said to be \emph{self-similar} if the images under $\varphi_{\omega}$ and $\psi_n$ are contained in $G$ and $G \times \overset{p^n}{\cdots} \times G$, respectively.

Let $G$ be a subgroup of $\mathrm{Aut}(T)$ acting \textit{spherically
  transitively}, that is, transitively on every layer of $T$.
Here the vertex
stabilisers at every level are conjugate under~$G$.  
We say that the group $G$ is \textit{fractal} if $\varphi_\omega(\text{St}_G(u_\omega))=G$ for every $\omega\in X^*$, after the natural identification
of subtrees. Furthermore we say that the group $G$ is \emph{strongly fractal} if $\varphi_x(\text{St}_G(1))=G$ for every $x\in X$, and we say that the group $G$ is \emph{super strongly fractal} if, for each $n\in \mathbb{N}$, we have $\varphi_\omega(\text{St}_G(n))=G$ for every word  $\omega\in X^*$ of length $n$; compare \cite[Def.~2.4]{Jone}.

The \textit{rigid vertex stabiliser} of $u$ in $G$ is the subgroup
$\mathrm{Rist}_G(u)$ consisting of all automorphisms in $G$ that fix
all vertices $v$ of $T$ not succeeding~$u$.
  The \textit{rigid $n$th level stabiliser} is the direct product
  \[
  \mathrm{Rist}_G(n) = \prod\nolimits_{\lvert \omega \rvert = n}
  \mathrm{Rist}_G(u_\omega) \trianglelefteq G
  \]
  of the rigid vertex stabilisers of the vertices at level~$n$.  

We recall that the spherically transitive group $G$ is a \emph{branch
  group}, if $\mathrm{Rist}_G(n)$ has finite index in $G$ for every
$n \in \mathbb{N}$.  
If, in addition, the group $G$ is self-similar and there is a subgroup $1 \not = K \leq \mathrm{St}_G(1)$ with
$K\times \cdots \times K \subseteq \psi_1(K)$ and
$\lvert G : K \rvert < \infty$, then $G$ is said to be \emph{regular
  branch over $K$}.  If in the previous definition the condition $\lvert G : K \rvert < \infty$ is omitted, then $G$ is said to be \emph{weakly regular branch over $K$}. Lastly we note that an infinite group $G$ is
\emph{just infinite} if all its proper quotients are finite, and we 
recall from \cite[Cor.~3.5]{KT} that a torsion multi-EGS group is just infinite.


\subsection{The collection $\mathscr{C}$ of multi-EGS groups}

We recall the definition of multi-EGS groups
from \cite{KT}.
For $j \in \{ 1, \ldots, p \}$ let $r_j \in \{0,1,\ldots,p-1\}$, with
$r_j \not = 0$ for at least one index~$j$. We fix the numerical datum
$\mathbf{E} = (\mathbf{E}^{(1)}, \ldots, \mathbf{E}^{(p)})$, where
each
$\mathbf{E}^{(j)} = (\mathbf{e}^{(j)}_1, \ldots,
\mathbf{e}^{(j)}_{r_j})$
is an $r_j$-tuple of $(\mathbb{F}_p)$-linearly independent
vectors
\[
\mathbf{e}^{(j)}_i = \big( e^{(j)}_{i,1}, \ldots, e^{(j)}_{i,p-1}
\big) \in (\mathbb{F}_p)^{p-1}, \quad i \in \{1, \ldots, r_j
\}.
\]
Write $r=r_1+\cdots +r_p$, and we let $\mathbf{V}$ be the vector space spanned by the $r$ vectors in $\mathbf{E}$.

By $a$ we denote the rooted automorphism, corresponding to the
$p$-cycle $(1 \; 2 \; \cdots \; p) \in \mathrm{Sym}(p)$, that
cyclically permutes the vertices at the first level of~$T$.  We note that
\[
S = \big\{ f \in
  \mathrm{Aut}(T) \mid \forall \omega \in X^* : f(\omega) \in
  \langle a \rangle \big\} \cong \varprojlim_{n \in \mathbb{N}} \,
  C_p \wr \cdots \wr C_p \wr C_p,
\]
the inverse limit of $n$-fold iterated wreath products of $C_p$,
is a Sylow-pro\nobreakdash-$p$ subgroup of $\mathrm{Aut}(T)$.  The
\emph{multi-EGS group} in \emph{standard form}
associated to $\mathbf{E}$ is the group
\begin{align*}
  G = G_\mathbf{E} & = \langle a, \mathbf{b}^{(1)}, \ldots,
  \mathbf{b}^{(p)} \rangle \\
  & = \big\langle \{a \} \cup \{ b^{(j)}_i \mid 1 \leq j \leq p, \, 1
  \leq i \leq r_j \} \big\rangle \leq S,
\end{align*}
where, for each $j \in \{1,\ldots,p\}$, the generator family
$\mathbf{b}^{(j)} = \{b^{(j)}_1, \ldots, b^{(j)}_{r_j}\}$ consists of
commuting directed automorphisms $b^{(j)}_i \in \mathrm{St}_{\mathrm{Aut}(T)}(1)$
along the directed path
\[
\big( \varnothing, (p-j+1), (p-j+1)(p-j+1), \ldots \big) \in \partial T
\]
that satisfy the recursive relations
\[
\psi_1(b^{(j)}_i) = \Big( a^{e^{(j)}_{i,j}}, \ldots, a^{e^{(j)}_{i,p-1}},b^{(j)}_i,
a^{e^{(j)}_{i,1}},\ldots, a^{e^{(j)}_{i,j-1}} \Big).
\]
The vector $\mathbf{e}^{(j)}_i$ is called the \emph{defining vector} of
$b^{(j)}_i$. We say that $\mathbf{e}^{(j)}_i$ is \emph{symmetric} if $e^{(j)}_{i,k}=e^{(j)}_{i,p-k}$ for all $1\le k\le \frac{p-1}{2}$; otherwise $\mathbf{e}^{(j)}_i$ is \emph{non-symmetric}.

A \emph{multi-EGS group} is a subgroup of $\mathrm{Aut}(T)$
that is conjugate to a multi-EGS group in
standard form. We let $\mathscr{C}$ be the class of all such multi-EGS
groups.  Further we define a \emph{multi-GGS group} to be a multi-EGS group where $r_j$ is non-zero for only one index $j$ (cf. \cite{Theofanis}). In particular we write $G_j= \langle a,\mathbf{b}^{(j)} \rangle$ for a multi-GGS group in standard form.

The groups in $\mathscr{C}$ are infinite and act spherically transitively on $T$. Furthermore they are fractal, and by \cite[Lem.~2.5]{Jone}, strongly fractal.

\section{Properties of multi-EGS groups}

\subsection{Branching subgroup}

We begin with a useful property of multi-GGS groups.

\begin{lemma}\label{Lemma 1.2} Let
  $G_{j}= \langle a,\mathbf{b}^{(j)} \rangle \in
  \mathscr{C}$, for some $j\in\{1,\ldots,p\}$,
  be in standard form and suppose that either $\mathbf{b}^{(j)}$ features at least one non-symmetric
    defining vector or $r_j\ge 2$. Then
\[
\psi_1(\gamma_3(G_j))\ge  \psi_1(\textup{St}_{G_j}(1)')=G_j'\times \overset{p}{\cdots}\times  G_j'.
\]
\end{lemma}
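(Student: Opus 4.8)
We want to show that for a multi-GGS group $G_j = \langle a, \mathbf{b}^{(j)} \rangle$ in standard form, under the hypothesis that either $\mathbf{b}^{(j)}$ has a non-symmetric defining vector or $r_j \geq 2$, we have $\psi_1(\operatorname{St}_{G_j}(1)') = G_j' \times \cdots \times G_j'$.

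The key structure: $\operatorname{St}_{G_j}(1)$ is generated by the $b_i^{(j)}$ and their conjugates $a^{-k} b_i^{(j)} a^k$. The map $\psi_1$ sends these to specific tuples. Under $\psi_1$, the generators $b_i$ map to $(a^{e_{i,j}}, \ldots, b_i, \ldots)$ with $b_i$ in the $(p-j+1)$-th slot, and conjugating by powers of $a$ cyclically shifts these tuples.

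The strategy should be:
1. Show $\psi_1(\operatorname{St}_{G_j}(1)) \leq G_j \times \cdots \times G_j$ (fractal/self-similar property).
2. Compute which commutators land in each coordinate. Taking commutators $[b_i, a^{-1}b_{i'}a]$ etc., we can isolate elements whose projection to one coordinate is nontrivial while other coordinates are controlled.
3. Use the hypothesis to get enough elements: the non-symmetric case gives an asymmetry that prevents cancellation; the $r_j \geq 2$ case gives extra generators to play with.
4. Deduce that the projection to each coordinate surjects onto $G_j'$, and that we can make the other coordinates trivial — so the full direct product is obtained.

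**Key steps.**

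First, I would recall that $G_j$ is fractal and strongly fractal, so $\varphi_x(\operatorname{St}_{G_j}(1)) = G_j$ for each $x$. Then $\psi_1(\operatorname{St}_{G_j}(1)) \leq G_j \times \cdots \times G_j$ is a subdirect product. Now I compute the commutator subgroup: $\psi_1(\operatorname{St}_{G_j}(1)')$ is generated by commutators of the generating tuples. Writing $\beta_i = \psi_1(b_i)$ and $\beta_i^{(k)} = \psi_1(a^{-k}b_i a^k)$, a commutator $[\beta_i^{(k)}, \beta_{i'}^{(k')}]$ has nontrivial entries only in coordinates where both tuples have nontrivial entries; in most coordinates the entries are powers of $a$, which commute, so those coordinates vanish. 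This localizes the commutators.

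The crux: I want to produce, for each coordinate $m$, an element of $\psi_1(\operatorname{St}_{G_j}(1)')$ that is $(1, \ldots, g, \ldots, 1)$ with $g$ ranging over a generating set of $G_j'$. By fractalness, $\varphi_m(\operatorname{St}_{G_j}(1)) = G_j$, so I can realize any $g \in G_j'$ in coordinate $m$; the problem is cleaning up the other coordinates. Here the two cases of the hypothesis do different work. If $r_j \geq 2$, I have two directed generators $b_1, b_2$ sharing the same path, so I can take commutators like $[b_1, b_2]$ or $[b_1^{(k)}, b_2]$ that are "concentrated" — $[b_1, b_2]$ has in the directed slot $[b_1, b_2] \in G_j'$ and in every other slot a commutator of two powers of $a$, which is trivial. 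So $[b_1, b_2]$ maps to $(1, \ldots, [b_1,b_2], \ldots, 1)$ immediately, and then conjugating by $a$ moves the nontrivial entry around all coordinates. Combined with $\varphi(\operatorname{St}_{G_j}(1)) = G_j$ — more precisely, one shows the normal closure of $[b_1,b_2]$ under this action, together with the subdirect structure, fills out $G_j' \times \cdots \times G_j'$. If instead $r_j = 1$ but the defining vector $\mathbf{e}$ is non-symmetric, then $e_k \neq e_{p-k}$ for some $k$; this asymmetry means that a suitable commutator of $b_1$ with a conjugate $a^{-k} b_1 a^k$ does not have matching powers of $a$ cancelling, and one extracts a concentrated element. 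In both cases, once a single "concentrated" nontrivial commutator is found and $G_j/G_j'$ is handled via the abelianization, the inclusion $\psi_1(\operatorname{St}_{G_j}(1)') \supseteq G_j' \times \cdots \times G_j'$ follows; the reverse inclusion is clear since $\psi_1(\operatorname{St}_{G_j}(1)) \leq G_j \times \cdots \times G_j$.

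**Main obstacle.**

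The hard part is the bookkeeping that shows the projection to one coordinate is all of $G_j'$ while simultaneously controlling the remaining $p-1$ coordinates — i.e., proving the subdirect product $\psi_1(\operatorname{St}_{G_j}(1)')$ is actually the full direct product and not a proper subdirect product. Goursat-type arguments usually reduce this to showing that no coordinate's projection is "glued" to another's; this is exactly where the hypothesis is essential, since without it (e.g. symmetric vector, $r_j = 1$, the classical GGS situation) the projections can be correlated and the conclusion can fail. Concretely, I expect the delicate computation to be: exhibit one explicit element of $\operatorname{St}_{G_j}(1)'$ whose $\psi_1$-image is supported on a single coordinate and generates (under the $\langle a\rangle$-action and products) a normal subgroup projecting onto $G_j'$ in that coordinate — the non-symmetric case requiring a careful choice of conjugating power $a^k$ to break the symmetry. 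The final statement $\psi_1(\gamma_3(G_j)) \geq \psi_1(\operatorname{St}_{G_j}(1)')$ is comparatively routine: $\gamma_3(G_j) = [G_j', G_j] \supseteq [\operatorname{St}_{G_j}(1), \operatorname{St}_{G_j}(1)]$-type containments should follow once one knows $\operatorname{St}_{G_j}(1)' \leq \gamma_3(G_j)$, which holds because $\operatorname{St}_{G_j}(1)$ contains $G_j'$ up to finite index considerations — or more directly, because $\operatorname{St}_{G_j}(1)' \leq [\operatorname{St}_{G_j}(1), G_j']$ is absorbed into $\gamma_3$.
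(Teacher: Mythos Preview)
The paper does not give a self-contained proof of this lemma; it simply cites \cite[Prop.~3.4]{KT} and \cite[Lem.~2]{AlejJone}. Your outline is in the spirit of what those references do, but there is a concrete error in your $r_j\ge 2$ case, and a slip in the final step.

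For $r_j\ge 2$ you propose $[b_1,b_2]$ as your ``concentrated'' element, observing that $\psi_1([b_1,b_2])=(1,\ldots,[b_1,b_2],\ldots,1)$. But by definition the generators $b_1^{(j)},\ldots,b_{r_j}^{(j)}$ of a single family $\mathbf{b}^{(j)}$ \emph{commute} (they are directed along the same path), so $[b_1,b_2]=1$ and this element yields nothing. The alternative $[a^{-k}b_1a^k,b_2]$ you mention is supported on two coordinates, not one. The actual argument for $r_j\ge 2$ exploits the linear independence of the defining vectors to form products $b_1^{\alpha}b_2^{\beta}$ whose $\psi_1$-image has a prescribed zero $a$-exponent in some non-directed slot, and then commutes carefully chosen shifts of such elements to isolate $([a,b_i],1,\ldots,1)$; this is where the real work lies and your sketch does not reach it.

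Your last paragraph also contains a slip: the inclusion $\mathrm{St}_{G_j}(1)'\le [\mathrm{St}_{G_j}(1),G_j']$ is not valid. The correct, and much simpler, reason that $\mathrm{St}_{G_j}(1)'\le\gamma_3(G_j)$ is exactly the fact you overlooked above: since the $b_i^{(j)}$ pairwise commute, for all $g,h\in G_j$ one has $[(b_i)^g,(b_{i'})^h]\equiv[b_i,b_{i'}]=1\pmod{\gamma_3(G_j)}$, and such commutators normally generate $\mathrm{St}_{G_j}(1)'$.
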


\begin{proof}
This follows from \cite[Prop.~3.4]{KT} and  \cite[Lem.~2]{AlejJone}.
\end{proof}

Now we establish  the subclasses of $\mathscr{C}$ that are seen to be regular branch over the derived subgroup, and then we prove that
there are no other such subclasses. In other words, we prove the following in several steps.

\begin{proposition} \label{classification}
Let $G = \langle a, \mathbf{b}^{(1)}, \ldots, \mathbf{b}^{(p)} \rangle
  \in \mathscr{C}$ be in standard form. Then $G$ is regular branch over $G'$ if and only if
   \begin{enumerate}
  \item[(i)]   there is a non-empty family $\mathbf{b}^{(j)}$,  $j \in \{1, \ldots, p\}$, that  features at least one non-symmetric
    defining vector; or
  \item[(ii)]  $\textup{dim }\mathbf{V}\ge 2$.
 \end{enumerate}
\end{proposition}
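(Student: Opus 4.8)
The plan is to prove both directions separately, treating the ``if'' direction by explicit construction of a branching subgroup and the ``only if'' direction by a structural dichotomy argument. For the ``if'' direction, suppose first that condition (i) holds, so some family $\mathbf{b}^{(j)}$ features a non-symmetric defining vector; or that (ii) holds with $\dim\mathbf{V}\ge 2$. In both cases I would first reduce to a multi-GGS subgroup: if (i) holds, the subgroup $G_j=\langle a,\mathbf{b}^{(j)}\rangle$ already satisfies the hypothesis of Lemma~\ref{Lemma 1.2} (it has a non-symmetric vector), so $\psi_1(\mathrm{St}_{G_j}(1)')=G_j'\times\overset{p}{\cdots}\times G_j'$. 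If instead (ii) holds but all vectors are symmetric, then either some single family $\mathbf{b}^{(j)}$ has $r_j\ge 2$ (again Lemma~\ref{Lemma 1.2} applies to $G_j$), or every non-empty family is a singleton and there are at least two such families, say $\mathbf{b}^{(j)}=\{b^{(j)}\}$ and $\mathbf{b}^{(k)}=\{b^{(k)}\}$ with linearly independent defining vectors; in this last case I would work directly with $\langle a, b^{(j)}, b^{(k)}\rangle$ and use the linear independence of the two vectors (together with the commutator calculus in $\mathrm{St}_G(1)$) to show that $\psi_1$ of a suitable subgroup contains $G'\times\cdots\times G'$. The key computational input here is that commutators $[b^{(j)}, (b^{(k)})^{a^m}]$ and the like lie in $\mathrm{St}_G(1)'$ and their $\psi_1$-images, as $m$ ranges, span enough of the factors; this is the same mechanism underlying \cite[Prop.~3.4]{KT} and \cite[Lem.~2]{AlejJone}. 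Finally, since $G$ is fractal and $|G:G'|<\infty$, and $\mathrm{St}_G(1)'\le G'\cap \mathrm{St}_G(1)$, one concludes $G'\times\overset{p}{\cdots}\times G'\subseteq\psi_1(G')$, so $G$ is regular branch over $G'$.

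For the ``only if'' direction, I would prove the contrapositive: suppose neither (i) nor (ii) holds, i.e. every defining vector is symmetric \emph{and} $\dim\mathbf{V}\le 1$. Then $r=\dim\mathbf{V}\le 1$ forces $r=1$ (as $r\ge 1$), so $G$ is a multi-GGS group with a single directed generator $b=b^{(j)}$, whose defining vector $\mathbf{e}$ is symmetric and nonzero. Thus $G=\langle a,b\rangle$ is a GGS-group with symmetric defining vector. For such groups it is classical (this is precisely the GGS-case analysed in \cite{FZ, AlejJone}) that $G$ is \emph{not} branch over $G'$ — indeed $G'$ does not contain $\mathrm{Rist}_G(1)$-type subgroups of the required form; more concretely one shows $\gamma_3(G)$ has infinite index issues are avoided but $G'\times\overset{p}{\cdots}\times G'\not\subseteq\psi_1(G'\cap\mathrm{St}_G(1))$, because the symmetry makes $\psi_1(\mathrm{St}_G(1))$ land in a proper ``symmetric'' subgroup of $G\times\cdots\times G$. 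I would cite or reprove the relevant statement from \cite{AlejJone} that a GGS-group with symmetric defining vector (other than the ones excluded) fails to be regular branch over $G'$; in fact such groups are regular branch over $\gamma_3(G)$ instead, which is the content of part (C) of the main theorem and is consistent here.

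The main obstacle I anticipate is the ``only if'' direction, specifically verifying that a GGS-group with symmetric nonzero defining vector really fails the regular-branch-over-$G'$ condition rather than merely failing the most obvious attempted inclusion. One must rule out \emph{all} possible branching subgroups $K\le\mathrm{St}_G(1)$ with $K\times\cdots\times K\subseteq\psi_1(K)$ and $|G:K|<\infty$, not just $K=G'\cap\mathrm{St}_G(1)$; the standard way around this is to observe that any such $K$ must contain $\gamma_n(G)$ for some $n$ (by finite index plus residual nilpotence/$p$-ness), compute $\psi_1$ on lower central terms, and show the abelianised obstruction — essentially $G/G'$ versus the image of $\psi_1(\mathrm{St}_G(1))$ modulo $G'\times\cdots\times G'$ — is nontrivial precisely when the vector is symmetric and $r=1$. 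A secondary technical point is handling the reduction in the ``if'' direction cleanly when several families are present but only one ``witnesses'' condition (i) or (ii): one should make sure that enlarging from the multi-GGS subgroup $G_j$ back up to $G$ preserves the inclusion $\psi_1(\text{something}) \supseteq G'\times\cdots\times G'$, which follows because $G_j'\le G'$ and $\psi_1$ is compatible with the inclusion $G_j\le G$, so $G_j'\times\overset{p}{\cdots}\times G_j'\subseteq\psi_1(\mathrm{St}_{G}(1)')$ and then one bootstraps using fractality of $G$ to replace each $G_j'$ factor by $G'$.
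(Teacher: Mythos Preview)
Your ``if'' direction follows the same architecture as the paper (reduce to a multi-GGS subgroup via Lemma~\ref{Lemma 1.2}, handle the all-symmetric singleton case separately, then bootstrap to all of $G'$), though your bootstrap step is underspecified: having $G_j'\times\cdots\times G_j'\subseteq\psi_1(\mathrm{St}_G(1)')$ together with fractality does not by itself give $G'\times\cdots\times G'$ in $\psi_1(G')$; the paper (Lemma~\ref{assoc-multi-GGS}) does this by an explicit computation producing $([a,b^{(j)}_l],1,\ldots,1)$ for every $j,l$, using the already-established $G_n'$-inclusion plus Lemma~\ref{notderivedsubgroup}.

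The ``only if'' direction has a genuine gap. From $\dim\mathbf{V}=1$ you conclude that $G$ has a \emph{single} directed generator and is therefore a GGS-group. This is false: $\dim\mathbf{V}=1$ only forces each $r_j\le 1$ (since the vectors within a single family are required to be linearly independent), but there may be several non-empty families $\mathbf{b}^{(i_1)},\ldots,\mathbf{b}^{(i_s)}$, each a singleton, all with defining vectors that are scalar multiples of one fixed symmetric vector. Such a $G$ is not a GGS-group, and you have not addressed it. The paper handles exactly this case in the lemma immediately following Proposition~\ref{twosymmetric}: one shows $([a,b],1,\ldots,1)\notin\psi_1(\mathrm{St}_G(1))$ by working modulo $\mathrm{St}_G(2)$, where the congruence $b^{(i_j)}\equiv (b^{(i_1)})^{a^{i_j-i_1}}$ collapses the problem to the GGS-group $\langle a,b^{(i_1)}\rangle$, and then one invokes the known GGS results from \cite{FZ,FAGUA}.

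A minor point: your final worry about ruling out \emph{all} finite-index branching subgroups $K$ is misplaced. The statement is specifically about being regular branch over $G'$, so it suffices to exhibit one element of $G'\times 1\times\cdots\times 1$ that is not in $\psi_1(G')$ (indeed, not even in $\psi_1(\mathrm{St}_G(1))$). No argument about arbitrary $K$ is needed.
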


The second part will be proved in two parts: when there is a non-empty family $\mathbf{b}^{(j)}$,  $j \in \{1, \ldots, p\}$, that has $r_j\ge 2$; or when
 $r_j\in\{0,1\}$ for all $j\in \{1,\ldots,p\}$ with all defining vectors being symmetric and $G$ has at least two linearly independent symmetric defining vectors.

First we identify a collection of exceptional
groups in~$\mathscr{C}$: let $\mathscr{G}$ be the subcollection of
groups that are conjugate in $\mathrm{Aut}(T)$ to 
$\langle a, \mathbf{b}^{(1)}, \ldots, \mathbf{b}^{(p)} \rangle \in
\mathscr{C}$
in standard form, where for  
$j \in \{1,\ldots, p\}$ every non-empty  family
$\mathbf{b}^{(j)} = \{b^{(j)}_1\}$ consists of a single directed
automorphism with constant defining vector
$\mathbf{e}^{(j)}_1 = (1,\ldots,1)$.

\begin{lemma}\label{notderivedsubgroup}
  Let
  $G = \langle a, \mathbf{b}^{(1)}, \ldots, \mathbf{b}^{(p)} \rangle
  \in \mathscr{C}\backslash \mathscr{G}$ be in standard form. Then
  \[
  \psi_1(\gamma_3(\mathrm{St}_G(1))) = \gamma_3(G) \times
  \overset{p}{\cdots} \times \gamma_3(G).
  \]
  In particular,
  \[
  \gamma_3(G) \times \overset{p}{\cdots}\times \gamma_3(G) \subseteq
  \psi_1(\gamma_3(G)),
  \]
  and $G$ is regular branch over $\gamma_3(G)$. 
\end{lemma}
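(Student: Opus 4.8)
The plan is to establish the displayed equality; the two ``in particular'' statements then follow formally. The inclusion $\psi_1(\gamma_3(\mathrm{St}_G(1)))\subseteq\gamma_3(G)\times\overset{p}{\cdots}\times\gamma_3(G)$ is the easy one: since $G$ is self-similar we have $\psi_1(\mathrm{St}_G(1))\subseteq G\times\overset{p}{\cdots}\times G$, and, $\gamma_3$ being generated by commutators, $\psi_1(\gamma_3(\mathrm{St}_G(1)))=\gamma_3(\psi_1(\mathrm{St}_G(1)))\subseteq\gamma_3(G\times\overset{p}{\cdots}\times G)=\gamma_3(G)\times\overset{p}{\cdots}\times\gamma_3(G)$. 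Granting the reverse inclusion, and hence the equality, the rest is formal: from $\mathrm{St}_G(1)\le G$ we get $\gamma_3(\mathrm{St}_G(1))\le\gamma_3(G)\le G'\le\mathrm{St}_G(1)$, so $\psi_1$ is defined on $\gamma_3(G)$ and $\gamma_3(G)\times\overset{p}{\cdots}\times\gamma_3(G)=\psi_1(\gamma_3(\mathrm{St}_G(1)))\subseteq\psi_1(\gamma_3(G))$; moreover $\gamma_3(G)$ is a nontrivial normal subgroup contained in $\mathrm{St}_G(1)$ (non-triviality being readily checked) and of finite index in $G$, the index being finite because $G$ is finitely generated and $G/\gamma_3(G)$ is nilpotent of class at most two and of finite exponent, hence finite. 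Thus $G$ is regular branch over $\gamma_3(G)$.

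For the reverse inclusion, $\gamma_3(G)\times\overset{p}{\cdots}\times\gamma_3(G)\subseteq\psi_1(\gamma_3(\mathrm{St}_G(1)))$, I would first reduce to a single coordinate. Since $\gamma_3(\mathrm{St}_G(1))\trianglelefteq G$ and conjugation by $a$ cyclically permutes the $p$ coordinates of $\psi_1$, it is enough to show $\gamma_3(G)\times1\times\cdots\times1\subseteq\psi_1(\gamma_3(\mathrm{St}_G(1)))$; the remaining coordinates then follow by conjugating by powers of $a$, and the full direct product by multiplying. Next, writing $\gamma_3(G)$ as the normal closure in $G$ of the finite set of triple commutators $[[x,y],z]$ with $x,y,z$ ranging over the generators $a$ and $b^{(j)}_i$, and using that $\varphi_1(\mathrm{St}_G(1))=G$ while $\psi_1(\gamma_3(\mathrm{St}_G(1)))$ is normalised by $\psi_1(\mathrm{St}_G(1))$, one sees it suffices to exhibit, for each such $w$, an element of $\gamma_3(\mathrm{St}_G(1))$ whose $\psi_1$-image is $(w,1,\ldots,1)$.

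Such elements I would build from triple commutators of the $a$-conjugates $(b^{(j)}_i)^{a^k}$, which together generate $\mathrm{St}_G(1)$. The relation $\psi_1(b^{(j)}_i)=(a^{e^{(j)}_{i,j}},\ldots,b^{(j)}_i,\ldots,a^{e^{(j)}_{i,j-1}})$ shows that the unique non-power-of-$a$ entry of $\psi_1((b^{(j)}_i)^{a^k})$, namely $b^{(j)}_i$, sits in a coordinate depending only on $j$ and $k$, and by varying $k$ it can be placed in any prescribed coordinate; since distinct families lie along directed paths through distinct level-one vertices, generators of distinct families occupy distinct coordinates. Consequently the $\psi_1$-image of $[[u,v],w]$, for $u,v,w$ among the $(b^{(j)}_i)^{a^k}$, is trivial in every coordinate where all of $u,v,w$ carry a power of $a$, and in each remaining coordinate its entry is a triple commutator in the $b^{(j)}_i$ occurring there together with powers of $a$ --- in particular an element of $\gamma_3(G)$. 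One must then choose the conjugating exponents and combine several such triple commutators so that all coordinates but one cancel, so that the product remains inside $\gamma_3(\mathrm{St}_G(1))$, and so that the surviving coordinate runs over a normal generating set of $\gamma_3(G)$.

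The hypothesis $G\notin\mathscr{G}$ enters here through a case distinction. If some family $\mathbf{b}^{(j_0)}$ has $r_{j_0}\ge2$ or contains a non-symmetric defining vector, then the relevant single-coordinate computations inside the multi-GGS subgroup $G_{j_0}=\langle a,\mathbf{b}^{(j_0)}\rangle$ are furnished by Lemma~\ref{Lemma 1.2} and its proof (\cite{KT}, \cite{AlejJone}). Otherwise every family has $r_j\le1$ and all defining vectors are symmetric, in which case $G\notin\mathscr{G}$ forces some family to carry a symmetric but non-constant vector, and one feeds in the corresponding GGS-group computation (in the spirit of \cite{FZ}, \cite{AlejJone}) and uses the disjointness of the directed paths to deal with the triple commutators that mix several families. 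I expect the genuine obstacle to be exactly this last organisational step: cancelling all but one coordinate while never leaving $\gamma_3(\mathrm{St}_G(1))$ --- a markedly tighter constraint than merely staying inside $\mathrm{St}_G(1)'$ --- and handling uniformly the triple commutators involving generators from more than one family.
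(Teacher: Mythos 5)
Your outline follows the same architecture as the paper's proof: reduce by spherical transitivity to producing $\gamma_3(G)\times 1\times\cdots\times 1$ inside $\psi_1(\gamma_3(\mathrm{St}_G(1)))$, use normal generation of $\gamma_3(G)$ by triple commutators of the generators, and manufacture the required elements from triple commutators of $a$-conjugates of the $b^{(j)}_i$. The easy inclusion and the ``in particular'' deductions are fine. But the proposal stops exactly where the work begins: you defer the construction of the elements $(w,1,\ldots,1)$ to an unspecified choice of ``conjugating exponents'' and ``combination'' of triple commutators, and you explicitly flag this as the remaining obstacle. That obstacle is the entire content of the lemma, so as it stands this is a plan, not a proof.

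Two concrete points. First, the difficulty you anticipate --- cancelling all but one coordinate without leaving $\gamma_3(\mathrm{St}_G(1))$ --- is resolved in the paper not by cancellation in a product but by a structural observation you do not make: a double commutator $[c_k^{\,a^k},c_l^{\,a^l}]$ of conjugated directed generators is \emph{automatically} supported in at most two coordinates, because every other entry is a commutator of powers of $a$ and hence trivial. The third commutator is then taken with an element $z\in\mathrm{St}_G(1)$ chosen so that $\varphi_1(z)$ is the desired generator while the section of $z$ at the one other supported coordinate is trivial; this isolates the first coordinate in one step. Producing such $z$ (e.g.\ with $\psi_1(z)=(a,*,\ldots,*,1)$) is precisely where the hypothesis $G\notin\mathscr{G}$ is used, via a family with a non-constant defining vector, and it is needed for the commutators $([a,c_k,a],1,\ldots,1)$ and $([a,c_k,c_l],1,\ldots,1)$ when $\mathbf{b}^{(k)}$ consists of the single constant vector. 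None of this appears in your outline. Second, your citation of Lemma~\ref{Lemma 1.2} does not furnish what you need even in the ``good'' case: that lemma gives $\psi_1(\gamma_3(G_{j_0}))\ge G_{j_0}'\times\cdots\times G_{j_0}'=\psi_1(\mathrm{St}_{G_{j_0}}(1)')$, which concerns $\mathrm{St}(1)'$ rather than $\gamma_3(\mathrm{St}(1))$ and says nothing about the mixed triple commutators involving generators from several families; the relevant input is \cite[Prop.~3.3]{KT}, which covers the case where every non-empty family has a non-constant vector, leaving the mixed/constant case as the new computation.
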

\begin{proof}
 Let
  $G = \langle a, \mathbf{b}^{(1)}, \ldots, \mathbf{b}^{(p)} \rangle
  \in \mathscr{C}\backslash \mathscr{G}$ be in standard form. The case where every non-empty family 
  $\mathbf{b}^{(j)}$, $j\in\{1,\ldots, p\}$, features at least one non-constant defining vector has been settled in \cite[Prop.~3.3]{KT}. So we may assume that there is at least one non-empty family with only the constant defining vector, and also that there is at least one $n\in \{1,\ldots,p\}$ with $\mathbf{b}^{(n)}$ featuring a non-constant defining vector. Without loss of generality, we may further assume that all defining vectors in $\mathbf{b}^{(n)}$ are non-constant. We proceed as in the proof of \cite[Prop.~3.3]{KT}.
  
By spherical transitivity, it suffices to show that
 \[
  \gamma_3(G) \times 1\times \cdots \times 1 \subseteq
  \psi_1(\gamma_3(\mathrm{St}_G(1))).
  \]
  Observe that $\gamma_3(G)$ is generated as a normal subgroup by
  commutators $[g_1,g_2,g_3]$ of group elements $g_1, g_2, g_3$
  ranging over the generating set
  $\{a\} \cup \{ b^{(j)}_i \mid 1 \leq j \leq p, \, 1 \leq i \leq r_j
  \}$. Therefore it suffices to prove, for $k,l,m\in \{1,\ldots,p\}$ with $k\ne l$ and any given 
  $c_j \in \{ b^{(j)}_1, \ldots, b^{(j)}_{r_j} \}$, $j \in \{k,l,m\}$, that
  the elements
  \begin{equation} \label{equ:elements1} ([a, c_k,c_l],1,\ldots,1) ,
    \quad ([a,c_k, a],1,\ldots, 1), 
  \end{equation}
  \begin{equation} \label{equ:elements2}
     \quad([c_k,c_l, a],1,\ldots, 1), \quad ([c_k,c_l,c_m],1,\ldots,1),
  \end{equation}
  are contained in $\psi_1(\gamma_3(\mathrm{St}_G(1)))$.
  
  By conjugation (cf. \cite[Lem.~3.1]{KT}), we may assume throughout that the defining vector of $c_k$ has the form $(1,{e_2},\ldots, e_{p-1})$.
  
  The elements in (\ref{equ:elements2}) are easier to deal with: observe that
   \[
  ([c_k,c_l,c_m],1,\ldots,1) = \psi_1 \big( [c_k^{\, a^k}, c_l^{\,
    a^l}, c_m^{\, a^m}] \big) \in
  \psi_1(\gamma_3(\mathrm{St}_G(1))),
  \]
  and, as
  $\psi_1([c_k^{\, a^k},c_l^{\, a^l}]) = ([c_k,c_l],1,\ldots,1)$, we
  can take $d \in \mathrm{St}_G(1)$ (cf. \cite[Lem.~2.5]{Jone}) such that
  $\psi_1(d) = (a,*,\ldots, *)$, where the symbols $*$ denote
  unspecified elements, to deduce that
  \[
  ([c_k,c_l,a],1,\ldots,1) = \psi_1 \big( [c_k^{\, a^k}, c_l^{\,
    a^l},d] \big) \in \psi_1(\gamma_3(\mathrm{St}_G(1))).
  \]

  We next deal with $([a,c_k, a],1,\ldots, 1)$.    
  If $\mathbf{b}^{(k)}$ features a non-constant defining vector, then 
  $([a,c_k, a],1,\ldots, 1)\in \psi_1(\gamma_3(\mathrm{St}_G(1)))$ follows from the proof of \cite[Prop.~3.3]{KT}. 
  So assume that $\mathbf{b}^{(k)}=\{c_k\}$ features the constant defining vector $(1,\ldots,1)$.
  Then we have
  \[
  \psi_1([c_k^{a^{k-1}},c_k^{a^{k}}])=([a,c_k],1,\ldots,1,[c_k,a]).
  \]
  We consider $\psi_1((b^{(n)}_1)^{a^{n-1}})=(a^{e^{(n)}_1},\ldots,a^{e^{(n)}_{p-1}},b_1^{(n)})$. As $b^{(n)}_1$ has a non-constant defining vector, there exists some $i\in \{1,\ldots,p-2\}$ such that $e^{(n)}_i\ne e^{(n)}_{i+1}$. Then
  \[
  \psi_1((b^{(n)}_1)^{a^{n-1-i}})=(a^{e^{(n)}_{i+1}},*,\ldots,*,a^{e^{(n)}_{i}}).
  \]
Next observe that
  \[
  \psi_1(c_k^{a^{k-2}})=(a,\ldots,a,c_k,a).
  \]
  Hence setting $\tilde{g}=  (b^{(n)}_1)^{a^{n-1-i}} (c_k^{a^{k-2}})^{-e_{i}^{(n)}}$ and taking an appropriate power $g$ of $\tilde{g}$ gives
  \[
  \psi_1(g)=(a,*,\ldots,*,1).
  \]
  
  Thus,
  \[
  \psi_1([c_k^{a^{k-1}},c_k^{a^{k}},g])=([a,c_k,a],1\ldots,1)\in \psi_1(\gamma_3(\mathrm{St}_G(1))).
  \]
  
  It remains to settle $([a, c_k,c_l],1,\ldots,1)$.  Suppose that
  \[
  \psi_1(c_l^{a^{l}})=(c_l,a^{f_1},\ldots,a^{f_{p-1}}).
  \]
 Then 
  \[
  \psi_1(({c_k}^{a^{k+1}})^{-f_{p-1}})=(a^{-f_{p-1}},{c_k}^{-f_{p-1}},a^{-f_{p-1}},\ldots, a^{-f_{p-1}})
  \]
  gives
  \[
  \psi_1(({c_k}^{a^{k+1}})^{-f_{p-1}}c_l^{a^{l}})=(a^{-f_{p-1}}c_l,*,\ldots,*,1).
  \]
  Hence
  \[
  \psi_1([c_k^{a^{k-1}},c_k^{a^{k}},({c_k}^{a^{k+1}})^{-f_{p-1}}c_l^{a^{l}}  ])=([a,c_k,a^{-f_{p-1}}c_l],1,\ldots,1)
  \]
  which is equal to $([a,c_k,c_l],1,\ldots,1)$ modulo $\langle ([a,c_k, a],1,\ldots, 1)\rangle^{G}$.
\end{proof}

Next we have the following result, which extends \cite[Prop.~3.4]{KT}.

\begin{lemma}\label{assoc-multi-GGS}
  Let
  $G = \langle a, \mathbf{b}^{(1)}, \ldots, \mathbf{b}^{(p)} \rangle
  \in \mathscr{C}$ 
  be in standard form and suppose that, for some $n\in \{1,\ldots,p\}$, the  multi-GGS subgroup $G_n=\langle a, \mathbf{b}^{(n)}\rangle$ satisfies the conditions of Lemma~\ref{Lemma 1.2}. Then
  \[
  \psi_1(\mathrm{St}_G(1)') = G' \times
  \overset{p}{\cdots} \times G'.
  \]
  In particular,
  \[
  G' \times \overset{p}{\cdots}\times G' \subseteq
  \psi_1(G'),
  \]
  and $G$ is regular branch over $G'$. 
\end{lemma}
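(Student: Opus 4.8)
The plan is to reduce the proof of Lemma~\ref{assoc-multi-GGS} to the multi-GGS case already handled by Lemma~\ref{Lemma 1.2}, together with the general branching information for $\gamma_3(G)$ from Lemma~\ref{notderivedsubgroup}. Since $G$ is self-similar and strongly fractal, the map $\psi_1$ restricted to $\mathrm{St}_G(1)$ is well-defined, and the inclusion $\psi_1(\mathrm{St}_G(1)') \subseteq G' \times \overset{p}{\cdots} \times G'$ is immediate from self-similarity. So the real content is the reverse inclusion $G' \times \overset{p}{\cdots} \times G' \subseteq \psi_1(\mathrm{St}_G(1)')$, and by spherical transitivity it suffices to show that $G' \times 1 \times \cdots \times 1 \subseteq \psi_1(\mathrm{St}_G(1)')$. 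Once that inclusion holds, applying $\psi_1$ to $\mathrm{St}_G(1)' \subseteq G'$ gives $\psi_1(G') \supseteq \psi_1(\mathrm{St}_G(1)') = G' \times \overset{p}{\cdots} \times G'$, and since $|G : G'| < \infty$ (as $G$ is finitely generated), $G$ is regular branch over $G'$.

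First I would use the hypothesis that $G_n = \langle a, \mathbf{b}^{(n)}\rangle$ satisfies the conditions of Lemma~\ref{Lemma 1.2}, so that $\psi_1(\mathrm{St}_{G_n}(1)') = G_n' \times \overset{p}{\cdots} \times G_n'$. The point is that $\mathrm{St}_{G_n}(1)' \le \mathrm{St}_G(1)'$, so already $G_n' \times \overset{p}{\cdots} \times G_n' \subseteq \psi_1(\mathrm{St}_G(1)')$; in particular $G_n' \times 1 \times \cdots \times 1 \subseteq \psi_1(\mathrm{St}_G(1)')$. It then remains to produce, for each generator $x \in \{a\} \cup \{b^{(j)}_i\}$ and each $b^{(k)}_i$, an element of $\psi_1(\mathrm{St}_G(1)')$ of the form $([x, b^{(k)}_i], 1, \ldots, 1)$ (and similarly with the roles swapped), since $G'$ is generated as a normal subgroup of $G$ by such commutators and $\mathrm{St}_G(1)'$ is normalised appropriately. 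Using the recursive relations $\psi_1((b^{(k)}_i)^{a^{k}}) = (b^{(k)}_i, a^{e^{(k)}_{i,1}}, \ldots, a^{e^{(k)}_{i,p-1}})$ and the strong fractality of $G$ (to find elements $d \in \mathrm{St}_G(1)$ with prescribed first section, cf. \cite[Lem.~2.5]{Jone}), I would compute commutators of suitable conjugates $(b^{(k)}_i)^{a^{k}}$, $(b^{(l)}_j)^{a^{l}}$, and such $d$'s so that all coordinates except the first collapse to $1$ while the first coordinate becomes the desired commutator — this is exactly the bookkeeping carried out, one case at a time, in the proof of Lemma~\ref{notderivedsubgroup} and in \cite[Prop.~3.4]{KT}, now adapted to $\gamma_2$ rather than $\gamma_3$.

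The commutators to handle are of the shapes $[a, c_k]$, $[c_k, c_l]$ for $c_k \in \mathbf{b}^{(k)}$, $c_l \in \mathbf{b}^{(l)}$ (with $k \ne l$), and $[c_k, c_k']$ for two distinct members of the same family $\mathbf{b}^{(k)}$ when $r_k \ge 2$. The terms $[c_k, c_l]$ with $k \ne l$ are the easiest: $\psi_1([c_k^{a^{k}}, c_l^{a^{l}}])$ already has the form $([c_k, c_l], 1, \ldots, 1)$ up to reindexing, since the directed paths $P_k, P_l$ point into different first-level subtrees. For $[a, c_k]$ I would use, after a conjugation reducing the defining vector of $c_k$ to the form $(1, e_2, \ldots, e_{p-1})$ (cf. \cite[Lem.~3.1]{KT}), products of $c_k^{a^{k-1}}$ with a correcting element built from $G_n$ to clear the unwanted coordinates, landing in $\mathrm{St}_G(1)'$ because each factor lies in $\mathrm{St}_G(1)$ and their product is a commutator modulo what has already been shown. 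The main obstacle is the same one that makes Lemma~\ref{notderivedsubgroup} delicate: when the family $\mathbf{b}^{(k)}$ consists of a single generator with a not-sufficiently-rich defining vector, the ``obvious'' correcting elements may not exist purely within $G_k$, and one must borrow a generator from the distinguished family $\mathbf{b}^{(n)}$ (which does satisfy the Lemma~\ref{Lemma 1.2} hypotheses) to supply the missing flexibility. Carefully choosing which power of which conjugate of $b^{(n)}_1$ to multiply in — so that the resulting element has first section $a$ (or the needed value) and trivial last section — is the crux, and it parallels the argument already given for $([a, c_k, a], 1, \ldots, 1)$ in the proof of Lemma~\ref{notderivedsubgroup}.
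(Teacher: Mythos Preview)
Your overall plan matches the paper's: reduce to $G'\times 1\times\cdots\times 1\subseteq\psi_1(\mathrm{St}_G(1)')$ by spherical transitivity, import $G_n'\times\cdots\times G_n'$ from Lemma~\ref{Lemma 1.2}, and handle $([c_k,c_l],1,\ldots,1)$ for $k\ne l$ via $\psi_1([(c_k)^{a^k},(c_l)^{a^l}])$. One small redundancy: the case $[c_k,c_k']$ for two members of the same family $\mathbf{b}^{(k)}$ is vacuous, since by definition those directed generators commute.

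Where your proposal parts ways with the paper is the $[a,c_k]$ step, and the route you sketch is both vaguer and more laborious than what is actually needed. You aim to build $d\in\mathrm{St}_G(1)$ with first section $a$ and \emph{trivial} last section, paralleling the $([a,c_k,a],1,\ldots,1)$ construction in Lemma~\ref{notderivedsubgroup}. But any such $d$ built as there will carry directed pieces in some middle coordinates, and commuting against $(c_k)^{a^k}$ then produces middle terms of the shape $[c_m^{\,*},a^{*}]$ that you have no prior control over when $G_m$ does not itself satisfy Lemma~\ref{Lemma 1.2}; cancelling those risks circularity. The paper avoids this entirely. After normalising so that $e^{(n)}_{1,1}=1$, it simply takes
\[
\psi_1\big([(b^{(n)}_1)^{a^{n-1}},(b^{(j)}_l)^{a^{j}}]\big)=\big([a,b^{(j)}_l],1,\ldots,1,[b^{(n)}_1,a^{e^{(j)}_{l,p-1}}]\big),
\]
which has exactly one unwanted coordinate, and that one lies in $G_n'$ --- hence already lies in $\psi_1(\mathrm{St}_G(1)')$ by Lemma~\ref{Lemma 1.2}. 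So the crux is not to force the last section to be trivial, but to arrange that the sole residual term sits in $G_n'$, where Lemma~\ref{Lemma 1.2} kills it for you.
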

\begin{proof}
By spherical transitivity, it suffices to show that 
\[
G'\times 1\times \cdots \times 1\subseteq \psi_1(G'),
\]
and in particular that
\[
([a,b^{(j)}_l],1,\ldots, 1),\,\,([b^{(j)}_l,b^{(k)}_m],1,\ldots,1)\in \psi_1(G'),
\]
for all distinct $j, k\in\{1,\ldots,p\}$ with $1\le l\le r_j$ and $1\le m\le r_k$ when $r_j,r_k\ne 0$.

The second set of commutators is straightforward to obtain:
\[
\psi_1([(b^{(j)}_l)^{a^j},(b^{(k)}_m)^{a^k}])=([b^{(j)}_l,b^{(k)}_m],1,\ldots,1)
\]
So it remains to show that $([a,b^{(j)}_l],1,\ldots, 1)\in \psi_1(G')$,
for all $j\in\{1,\ldots,p\}$ with $1\le l\le r_j$  when $r_j\ne 0$.

By assumption, we may exclude the case when $\mathbf{b}^{(n)}=\{b^{(n)}\}$ consists of just one directed automorphism with symmetric defining vector. Without loss of generality, by \cite[Lem.~3.2]{KT}, we may assume that $e^{(n)}_{i,1}=1$ for all $1\le i\le r_n$.

As $G_n$ is regular branch over $(G_n)'$, we have $([a,b^{(n)}_i],1,\ldots,1)\in \psi_1(G')$ for $1\le i\le r_n$. Thus
\[
\psi_1([(b^{(n)}_1)^{a^{n-1}},(b^{(j)}_l)^{a^j}])=([a,b^{(j)}_l],1,\ldots,1,[b^{(n)}_1,a^{e^{(j)}_{l,p-1}}]),
\]
together with Lemma \ref{notderivedsubgroup}, enables us to deduce our required result.
\end{proof}

Thirdly, we deal with the case when every  multi-GGS subgroup $G_j$ is only regular branch over $\gamma_3(G_j)$
and not over $(G_j)'$; that is, all defining vectors are symmetric, and each family $\mathbf{b}^{(j)}$ consists of a single generator.

\begin{proposition}\label{twosymmetric}
Let $G=\langle a, \mathbf{b}^{(1)},\ldots, \mathbf{b}^{(p)} \rangle \in \mathscr{C}$ be in standard form
with $r_j\in\{0,1\}$ for all $j\in \{1,\ldots,p\}$. Suppose all defining vectors are symmetric 
and there are at least two linearly independent defining vectors. Then $G$ is regular branch over $G'$ and
\[
 \psi_1(\textup{St}_G(1)')=G'\times \overset{p}{\cdots}\times G'.
\]

\end{proposition}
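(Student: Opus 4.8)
The plan is to establish the displayed equality $\psi_1(\textup{St}_G(1)')=G'\times\overset{p}{\cdots}\times G'$; that $G$ is then regular branch over $G'$ follows immediately, since $G'\leq\textup{St}_G(1)$ (as $G/\textup{St}_G(1)$ is cyclic), $G'\neq 1$, $\psi_1(\textup{St}_G(1)')\subseteq\psi_1(G')$, and $|G:G'|<\infty$ for multi-EGS groups (cf.\ \cite{KT}). One inclusion is clear from $\psi_1(\textup{St}_G(1))\subseteq G\times\overset{p}{\cdots}\times G$. For the reverse: since $\textup{St}_G(1)'\trianglelefteq G$ and conjugation by $a$ cyclically permutes coordinates, spherical transitivity reduces the task to showing $G'\times 1\times\overset{p-1}{\cdots}\times 1\subseteq\psi_1(\textup{St}_G(1)')$. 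Writing $N$ for the set of $g\in G'$ with $(g,1,\ldots,1)\in\psi_1(\textup{St}_G(1)')$, one checks — using normality of $\psi_1(\textup{St}_G(1)')$ in $\psi_1(\textup{St}_G(1))$ and the strong fractality of $G$ (each $h\in G$ is the first coordinate of $\psi_1(u)$ for some $u\in\textup{St}_G(1)$) — that $N$ is a normal subgroup of $G$; hence it is enough to put the generating commutators $[a,b^{(j)}]$ and $[b^{(j)},b^{(k)}]$ of $G'$, for $j\neq k$ with $r_j=r_k=1$, into $N$. Since $G$ has at least two linearly independent defining vectors, which cannot both be constant, $G\notin\mathscr{G}$, so by Lemma~\ref{notderivedsubgroup} we have $\psi_1(\textup{St}_G(1)')\supseteq\psi_1(\gamma_3(\textup{St}_G(1)))=\gamma_3(G)\times\overset{p}{\cdots}\times\gamma_3(G)$; thus it suffices to realise these commutators in the first coordinate \emph{modulo $\gamma_3(G)$ in each coordinate}. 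For $[b^{(j)},b^{(k)}]$ this is immediate: $(b^{(j)})^{a^{j}},(b^{(k)})^{a^{k}}\in\textup{St}_G(1)$ and $\psi_1\big([(b^{(j)})^{a^{j}},(b^{(k)})^{a^{k}}]\big)=([b^{(j)},b^{(k)}],1,\ldots,1)$, the remaining coordinates being commutators of powers of $a$.

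To deal with $[a,b^{(j)}]$, first work inside $G_j$. As in the classical GGS computation, for $1\leq m\leq p-1$ and $c\in\mathbb{Z}/p\mathbb{Z}$ one checks that $\psi_1\big([(b^{(j)})^{a^{j-1+c}},(b^{(j)})^{a^{j-1+c+m}}]\big)$ equals, modulo $\gamma_3(G)$ in each coordinate, $[a,b^{(j)}]^{-e^{(j)}_m}$ in coordinate $c$ and $[a,b^{(j)}]^{e^{(j)}_m}$ in coordinate $c+m$ (using the symmetry $e^{(j)}_m=e^{(j)}_{p-m}$), and is trivial elsewhere. Since $\mathbf{e}^{(j)}\neq 0$ we may fix $m$ with $e^{(j)}_m\neq 0$; letting $c$ vary, these elements of $\psi_1(\textup{St}_G(1)')$ realise, modulo $\gamma_3(G)\times\overset{p}{\cdots}\times\gamma_3(G)$, every ``difference pattern'' with $[a,b^{(j)}]$ in one coordinate, $[a,b^{(j)}]^{-1}$ in another, and trivial entries otherwise, and — a single nonzero residue generating $\mathbb{Z}/p\mathbb{Z}$ — hence all configurations of powers of $[a,b^{(j)}]$ whose total content is trivial. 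By itself this does not suffice: the total $[a,b^{(j)}]$-content of such a configuration always vanishes, and this is precisely why a single symmetric defining vector does not give regular branching over $G'$.

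The crucial step breaks this obstruction using a second generator. Because all families have size at most one and there are two linearly independent defining vectors, there is some $k\neq j$ with $\mathbf{e}^{(k)}$ linearly independent from $\mathbf{e}^{(j)}$ (otherwise all defining vectors would be scalar multiples of $\mathbf{e}^{(j)}$). For $1\leq t\leq p-1$ put $A_t=\big[(b^{(j)})^{a^{j-1+t}},(b^{(k)})^{a^{k-1}}\big]\in\textup{St}_G(1)'$; here $b^{(j)}$ and $b^{(k)}$ occupy distinct coordinates (the $t$th and the $p$th), so no $[b^{(j)},b^{(k)}]$ appears, and a direct computation gives that, modulo $\gamma_3(G)$ in each coordinate, $\psi_1(A_t)$ equals $[a,b^{(j)}]^{-e^{(k)}_t}$ in coordinate $t$ and $[a,b^{(k)}]^{e^{(j)}_t}$ in coordinate $p$ (again by symmetry of $\mathbf{e}^{(j)}$), and is trivial elsewhere. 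Consequently, for $\lambda=(\lambda_1,\ldots,\lambda_{p-1})\in(\mathbb{F}_p)^{p-1}$ the element $\prod_{t}A_t^{\lambda_t}\in\textup{St}_G(1)'$ has, modulo $\gamma_3(G)$ in each coordinate, its $p$th coordinate equal to $[a,b^{(k)}]^{\langle\lambda,\mathbf{e}^{(j)}\rangle}$ and its remaining coordinates powers of $[a,b^{(j)}]$ of total exponent $-\langle\lambda,\mathbf{e}^{(k)}\rangle$, where $\langle\,\cdot\,,\,\cdot\,\rangle$ is the standard pairing on $(\mathbb{F}_p)^{p-1}$. As $\mathbf{e}^{(j)},\mathbf{e}^{(k)}$ are linearly independent, we may choose $\lambda$ with $\langle\lambda,\mathbf{e}^{(j)}\rangle=0$ and $\langle\lambda,\mathbf{e}^{(k)}\rangle=-1$; then $\psi_1(\prod_t A_t^{\lambda_t})$ is, modulo $\gamma_3(G)\times\overset{p}{\cdots}\times\gamma_3(G)$, a configuration of powers of $[a,b^{(j)}]$ — trivial in coordinate $p$ — with total content $[a,b^{(j)}]$. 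Multiplying by a suitable difference pattern from the previous paragraph rearranges it into $([a,b^{(j)}],1,\ldots,1)$, so $([a,b^{(j)}],1,\ldots,1)\in\psi_1(\textup{St}_G(1)')$.

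Combining the two kinds of commutator gives $G'\times 1\times\overset{p-1}{\cdots}\times 1\subseteq\psi_1(\textup{St}_G(1)')$, hence $\psi_1(\textup{St}_G(1)')=G'\times\overset{p}{\cdots}\times G'$ by spherical transitivity, which as explained yields the proposition. The main obstacle is the third paragraph: a single symmetric defining vector really cannot produce $([a,b^{(j)}],1,\ldots,1)$, and the new ingredient is the combination of commutators along two distinct spinal paths, which works exactly because two linearly independent symmetric vectors cancel the ``total content'' obstruction via the elementary surjectivity of $\lambda\mapsto(\langle\lambda,\mathbf{e}^{(j)}\rangle,\langle\lambda,\mathbf{e}^{(k)}\rangle)$.
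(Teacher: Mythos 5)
Your proof is correct, and it takes a genuinely different route from the one in the paper. The paper's argument splits into two cases according to whether some defining vector has zero first component, normalises generators so that certain entries equal $1$, and then produces $([a,b^{(j)}],1,\ldots,1)$ through fairly ad hoc commutator manipulations (constructing auxiliary elements $d$ and commuting them against $(b^{(j)})^{a^j}(b^{(k)})^{-a^k}$). Your argument instead works uniformly, with no case split and no normalisation: you reduce everything modulo $\gamma_3(G)\times\cdots\times\gamma_3(G)$ (available by Lemma~\ref{notderivedsubgroup}, exactly as in the paper), observe that commutators along a single spinal path only yield ``sum-zero'' configurations of powers of $[a,b^{(j)}]$, and then break that obstruction with the elements $A_t=[(b^{(j)})^{a^{j-1+t}},(b^{(k)})^{a^{k-1}}]$, whose contributions are governed by the bilinear pairing against $\mathbf{e}^{(j)}$ and $\mathbf{e}^{(k)}$; linear independence makes $\lambda\mapsto(\langle\lambda,\mathbf{e}^{(j)}\rangle,\langle\lambda,\mathbf{e}^{(k)}\rangle)$ surjective, which is precisely what produces total content $1$. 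I checked the coordinate computations (including the use of symmetry $e^{(j)}_t=e^{(j)}_{p-t}$ to identify the entry at position $p$, and the fact that for every $j\in J$ some $\mathbf{e}^{(k)}$ is independent of $\mathbf{e}^{(j)}$), and they are right. What your approach buys is transparency: it isolates the exact role of the hypothesis ``two linearly independent vectors'' as the surjectivity of a rank-two linear map, and it explains conceptually why one symmetric vector fails (the sum-zero obstruction). What the paper's approach buys, arguably, is that its intermediate elements reappear in the later analysis of the exceptional class $\mathscr{E}$ and of Proposition~\ref{key}, so the computations are reused; your argument would need to be supplemented there. Either way, your proof is a valid and somewhat cleaner substitute for the paper's.
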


\begin{proof}
Let $J=\{j\in \{1,\ldots,p\}\mid r_j\ne 0\}$. By assumption, we have $G=\langle \{a\}\cup \{b^{(j)}\mid j\in J\} \rangle$. For $k\in J$, write $(f_1,\ldots,f_{p-1})$ for the defining vector of $b^{(k)}$. By \cite[Lem.~3.2]{KT}, we may assume that $f_1=1$. 


Suppose there exists $j\in J\backslash\{k\}$ such that $b^{(j)}$ is defined by $(e_1,\ldots,e_{p-1})$ with $e_1=0$.
Then
\[
\psi_1([(b^{(k)})^{a^k},(b^{(j)})^{a^{j-1}}])=(1,\ldots,1,[a,b^{(j)}]).
\]
Let $t\in\{2,\ldots,p-2\}$ be such that $e_t\ne 0$. Then for any $l\in J\backslash\{j\}$, we obtain
\[
\psi_1([(b^{(j)})^{a^{j-t}},(b^{(l)})^{a^l}])=([a^{e_t},b^{(l)}],1,\ldots,1,[b^{(j)},a^{*}],1,\ldots, 1),
\]
which enables us to extract $([a,b^{(l)}],1,\ldots,1)$ 
making use of Lemma \ref{notderivedsubgroup}.
Lastly, 
\[
 \psi_1([(b^{(j)})^{a^{j}},(b^{(k)})^{a^k}])=([b^{(j)},b^{(k)}],1,\ldots,1)
\]
shows that all generators of $G'\times 1\times \cdots \times 1$, as a normal subgroup, are obtained.

Suppose now that for all $j\in J$ the defining vector of $b^{(j)}$ has non-zero first component. 
Let $j\in J\backslash \{k\}$ be such that the defining vector $(e_1,\ldots,e_{p-1})$ of $b^{(j)}$ is linearly independent 
from $(f_1,\ldots,f_{p-1})$. By replacing $b^{(j)}$ with an appropriate power, we may assume that $e_1=1$.


Let $t=\min \{i\mid e_i\ne f_i\}>1$. Consequently, 
\[
\psi_1((b^{(j)})^{a^{j}}(b^{(k)})^{-a^{k}})=(b^{(j)}(b^{(k)})^{-1},1,\ldots,1,a^{e_t-f_t},\ldots,a^{e_{p-t}-f_{p-t}},1,\ldots,1),
\]
and, for $m\in \{1,\ldots,p\}$ such that  $(e_t-f_t)m= -e_t$ in $\mathbb{F}_p$, we have
\[
\psi_1((b^{(j)})^{a^{j}}((b^{(j)})^{a^{j}}(b^{(k)})^{-a^{k}})^m)=\qquad\qquad\qquad\qquad\qquad\qquad\qquad
\]
\[
\qquad\qquad(b^{(j)}(b^{(j)}(b^{(k)})^{-1})^m,a,a^{e_2},\ldots,a^{e_{t-1}},1,*,\ldots,*,1,a^{e_{t-1}},\ldots,a^{e_2},a).
\]
Hence, writing $d$ for $((b^{(j)})^{a^{j}}((b^{(j)})^{a^{j}}(b^{(k)})^{-a^{k}})^m$, we obtain
\[
\psi_1([d^{a^t},(b^{(j)})^{a^{j}}(b^{(k)})^{-a^{k}}])\qquad\qquad\qquad\qquad\qquad\qquad\qquad\qquad\qquad
\]
\[
\quad\qquad\qquad=\big[(1,a^{e_{t-1}},\ldots,a^{e_2},a,b^{(j)}(b^{(j)}(b^{(k)})^{-1})^m,a,a^{e_2},\ldots,a^{e_{t-1}},1,*,\ldots,*),
\]
\[
\qquad\qquad\qquad\qquad(b^{(j)}(b^{(k)})^{-1},1,\ldots,1,a^{e_t-f_t},\ldots,a^{e_{p-t}-f_{p-t}},1,\ldots,1)\big]
\]
\[
=(1,\overset{t}{\ldots},1,[b^{(j)}(b^{(j)}(b^{(k)})^{-1})^m,a^{e_t-f_t}],1,\ldots,1).\qquad
\]
Now making use of Lemma \ref{notderivedsubgroup}, together with
\[
\psi_1([(b^{(j)})^{a^{j-1}},((b^{(j)})^{a^{j}}(b^{(k)})^{-a^{k}})^m])=([a,(b^{(j)}(b^{(k)})^{-1})^m],1\ldots,1),
\]
gives, via spherical transitivity,
\[
([b^{(j)},a],1,\ldots,1),
\]
and as above, we are able to get $([a,b^{(l)}],1,\ldots,1)$ for any $l\in J\backslash\{j\}$. 
\end{proof}

We note that in the above, all except one of the defining vectors may be constant. 

Now we rule out all other possibilities for being regular branch over the derived subgroup. 
Let $\mathscr{R}$ be the currently known collection of groups in $\mathscr{C}$ that are seen to be regular branch over
the derived subgroup; that is, the groups defined in Lemma~\ref{assoc-multi-GGS} and Proposition~\ref{twosymmetric}  and their conjugates in $\mathrm{Aut}(T)$.

\begin{lemma}
Let $G\in \mathscr{C}\backslash \mathscr{R}$. Then $G$ is not regular branch over $G'$.
\end{lemma}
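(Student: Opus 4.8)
I would first pin down exactly which groups lie in $\mathscr{C}\backslash\mathscr{R}$. By Proposition~\ref{classification}'s intended breakdown, a group $G\in\mathscr{C}$ in standard form that is \emph{not} covered by Lemma~\ref{assoc-multi-GGS} nor by Proposition~\ref{twosymmetric} must satisfy: no non-empty family $\mathbf{b}^{(n)}$ satisfies the conditions of Lemma~\ref{Lemma 1.2} (so every non-empty family is a singleton $\{b^{(j)}\}$ with symmetric defining vector), and moreover the defining vectors do not contain two linearly independent ones, i.e. $\dim\mathbf{V}\le 1$. Combining these, $G = \langle a, b^{(j)} : j\in J\rangle$ where all $b^{(j)}$ have defining vectors that are scalar multiples of a single symmetric vector $\mathbf{e}=(e_1,\dots,e_{p-1})$ with $e_k=e_{p-k}$. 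The goal is then to show such a $G$ is not regular branch over $G'$.

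\textbf{Key steps.} The natural strategy is to compare $\psi_1(G')$ with $G'\times\overset{p}{\cdots}\times G'$ and show the latter is \emph{not} contained in the former, by exhibiting an obstruction at the abelianised level. Concretely, I would study the quotient $G'/\gamma_3(G)$, equivalently the abelian group $G'^{\mathrm{ab}}$ or a suitable $\mathbb{F}_p$-vector-space quotient, and compute the image of $\psi_1(\mathrm{St}_G(1)')$ inside $(G/\gamma_3(G))\times\overset{p}{\cdots}\times(G/\gamma_3(G))$. Since $\gamma_3(G)\times\overset{p}{\cdots}\times\gamma_3(G)\subseteq\psi_1(\gamma_3(G))$ by Lemma~\ref{notderivedsubgroup} (as $G\notin\mathscr{G}$ — one should double-check the constant-vector case is handled separately, but here the vectors are symmetric and in the relevant subcase $\mathbf{V}\ne 0$), the question of whether $G'\times\overset{p}{\cdots}\times G'\subseteq\psi_1(G')$ reduces modulo $\gamma_3$ to a linear-algebra computation over $\mathbb{F}_p$: one tracks, for each generator of $G'$ written as $([a,b^{(j)}],1,\dots,1)$ etc., whether it can be hit by $\psi_1$ of an element of $G'$, using the recursion $\psi_1(b^{(j)}) = (a^{e_j},\dots,a^{e_{p-1}},b^{(j)},a^{e_1},\dots,a^{e_{j-1}})$. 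The symmetry $e_k=e_{p-k}$ and the one-dimensionality of $\mathbf{V}$ should force the image of $\psi_1(\mathrm{St}_G(1)')$ modulo $\gamma_3$ into a proper subspace — this is precisely the failure mechanism already visible for symmetric GGS-groups in \cite{AlejJone}, and I would reduce to (or directly invoke) that single-family analysis by projecting onto the coordinates touched by a single $b^{(j)}$.

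\textbf{Main obstacle.} The delicate point is that $G$ may have several families $\mathbf{b}^{(j)}$ along disjoint paths, so $\mathrm{St}_G(1)'$ and $G'$ are larger than in the single multi-GGS case, and one must check that combining the several generators does not create \emph{new} elements of $\psi_1(G')$ that escape the proper subspace — in other words, that the interactions $[b^{(j)},b^{(k)}]$ and conjugates by $a$ do not help. I expect this to be handled by noting that $[b^{(j)},b^{(k)}]\in\mathrm{St}_G(1)'$ has $\psi_1$ supported so as to only contribute $([b^{(j)},b^{(k)}],1,\dots,1)$-type terms (which are already in $\psi_1(G')$ trivially) and nothing of the obstructed $([a,b^{(j)}],1,\dots,1)$ type, so the obstruction for each individual $j$ survives. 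Assembling this — a clean statement of the relevant quotient, the reduction to one family, and the citation/adaptation of the symmetric-GGS computation — is the bulk of the work; the arithmetic itself (showing $\sum e_k \equiv 0$ or an analogous symmetric identity kills a coordinate) is routine once the framework is set up. If this modular approach proves fiddly, a fallback is an index count: show $|G':\mathrm{St}_G(1)'|$ and $|G'\times\cdots\times G':\psi_1(\mathrm{St}_G(1)')|$ are incompatible, again reducing to the known multi-GGS computations.
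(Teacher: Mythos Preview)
Your identification of $\mathscr{C}\backslash\mathscr{R}$ is correct: these are exactly the groups with each non-empty $\mathbf{b}^{(j)}$ a singleton, all defining vectors symmetric and spanning a one-dimensional~$\mathbf{V}$. However, your proposed reduction modulo $\gamma_3(G)$ is more intricate than necessary and has a genuine gap. You invoke Lemma~\ref{notderivedsubgroup} to get $\gamma_3(G)\times\cdots\times\gamma_3(G)\subseteq\psi_1(\gamma_3(G))$, but that lemma explicitly excludes~$\mathscr{G}$, and $\mathscr{G}\subseteq\mathscr{C}\backslash\mathscr{R}$. Your parenthetical hope that ``$\mathbf{V}\ne 0$'' suffices is mistaken: the constant vector is a nonzero symmetric vector, and groups built from it are precisely the members of~$\mathscr{G}$, so your argument does not cover them. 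Even in the non-constant case, the ``main obstacle'' you flag --- controlling the cross-terms $[b^{(j)},b^{(k)}]$ and their conjugates modulo $\gamma_3$ --- is a real complication, since by \cite[Prop.~3.9]{KT} the $b^{(j)}$ remain independent in $G/G'$.

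The paper sidesteps all of this with a much cleaner reduction: work modulo $\mathrm{St}_G(2)$ rather than modulo $\gamma_3(G)$. Since all the $b^{(i_j)}$ share the same defining vector, one has
\[
b\equiv (b^{(i_j)})^{a^{i_j-i_1}}\pmod{\mathrm{St}_G(2)}\qquad\text{for }2\le j\le r,
\]
so modulo $\mathrm{St}_G(2)$ the extra directed generators become redundant and the question reduces to the single GGS-group $\langle a,b\rangle$ with that symmetric defining vector. The statement $([a,b],1,\ldots,1)\notin\psi_1(\mathrm{St}_G(1))$ then follows directly from the known GGS computations in \cite{FZ} and \cite{FAGUA}, uniformly handling both the constant and non-constant symmetric cases. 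This collapse-to-one-generator trick is exactly what you were reaching for with ``projecting onto the coordinates touched by a single $b^{(j)}$'', but the quotient that makes it work cleanly is $\mathrm{St}_G(2)$, not~$\gamma_3(G)$.
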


\begin{proof}
Suppose $G=\langle a,b^{(i_1)},\ldots, b^{(i_r)}\rangle$, for $r\in \mathbb{N}$ and $i_1,\ldots,i_r\in\{1,\ldots,p\}$, 
is in standard form and not in $\mathscr{R}$. In other words, $b^{(i_1)},\ldots, b^{(i_r)}$ have the same 
symmetric defining vector. 
We write $b$ for $b^{(i_1)}$.

It suffices to show that $([a,b],1,\ldots,1)\not\in \psi_1(\text{St}_G(1))$. In fact, we will establish this result
working modulo $\text{St}_G(2)$. As  
$b\equiv (b^{(i_j)})^{a^{i_j-i_1}}$ mod $\text{St}_G(2)$ for $2\le j\le r$, it suffices to consider the GGS-group 
$G=\langle a,b\rangle$. The result now follows from \cite[Proof of Thm.~3.7]{FZ} and \cite[Thm.~3.7]{FAGUA}.
\end{proof}

Thus the proof of Proposition~\ref{classification} is now complete.

\subsection{Auxiliary results}
 
The following results are necessary for the upcoming sections.

\begin{lemma}\label{lem:subdirect}
Let $G= \langle a,\mathbf{b}^{(1)}, \ldots, \mathbf{b}^{(p)} \rangle\in \mathscr{C}\backslash \mathscr{G}$ be in standard form. 
\begin{enumerate}
    \item [(i)] If $G$ is regular branch over $G'$, then $\psi_1(G')$ is a subdirect product of $G\times \overset{p}{\cdots}\times G$.
    \item [(ii)] If $G$ is regular branch over $\gamma_3(G)$ but not over $G'$, then $\psi_1(\gamma_3(G))$ is a subdirect product of $G\times \overset{p}{\cdots}\times G$.
\end{enumerate}
\end{lemma}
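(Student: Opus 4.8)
The plan is to deduce both statements from the branching structure that has already been established, namely that $G$ is regular branch over $G'$ in case (i) and regular branch over $\gamma_3(G)$ in case (ii), together with the fact that every group in $\mathscr{C}$ is fractal and spherically transitive. Recall that a subgroup $H \le \mathrm{St}_G(1)$ has $\psi_1(H)$ a subdirect product of $G \times \overset{p}{\cdots} \times G$ precisely when, for each coordinate $x \in \{1,\ldots,p\}$, the projection $\varphi_x(\psi_1(H)) = \varphi_x(H)$ equals all of $G$. By the spherical transitivity of $G$ (the generator $a$ cyclically permutes the $p$ first-level subtrees and normalises both $G'$ and $\gamma_3(G)$), it suffices to handle a single coordinate, say $x = 1$: if $\varphi_1(\psi_1(N)) = G$ for $N \in \{G', \gamma_3(G)\}$, then conjugating by powers of $a$ gives $\varphi_x(\psi_1(N)) = G$ for all $x$.

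For part (i), I would argue as follows. Since $G \notin \mathscr{G}$ and $G$ is regular branch over $G'$, Proposition~\ref{classification} places $G$ in the collection $\mathscr{R}$; hence either Lemma~\ref{assoc-multi-GGS} or Proposition~\ref{twosymmetric} applies (up to conjugacy in $\mathrm{Aut}(T)$, which does not affect the subdirect-product property once we are in standard form). In either case the stronger identity
\[
\psi_1(\mathrm{St}_G(1)') = G' \times \overset{p}{\cdots} \times G'
\]
holds, so in particular $G' \times \overset{p}{\cdots} \times G' \subseteq \psi_1(G')$, which immediately gives $\varphi_x(\psi_1(G')) \supseteq G'$ for every $x$. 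It then remains to promote $G'$ to $G$ in each coordinate. For this I would use that $\psi_1(G')$ also contains the ``diagonal-type'' elements coming from commutators of the directed generators: for distinct $j,k$ with $r_j,r_k \ne 0$ one has $\psi_1([(b^{(j)}_l)^{a^j},(b^{(k)}_m)^{a^k}]) = ([b^{(j)}_l,b^{(k)}_m],1,\ldots,1)$ and, more importantly, for the relevant $n$ (as in Lemma~\ref{assoc-multi-GGS}) the element $([a,b^{(j)}_l],1,\ldots,1)$ lies in $\psi_1(G')$; combining the first coordinate of such elements with $G'$ already present in that coordinate shows $\varphi_1(\psi_1(G'))$ contains $a$-conjugates and commutators generating all of $G$. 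Alternatively, and more cleanly, since $G$ is fractal we have $\varphi_1(\mathrm{St}_G(1)) = G$, and as $G' \trianglelefteq \mathrm{St}_G(1)$ with $\mathrm{St}_G(1)/G'$ abelian, one checks directly that $\varphi_1(\psi_1(G'))$ is normal in $G = \varphi_1(\psi_1(\mathrm{St}_G(1)))$ and contains $G'$ together with enough further elements (from the commutators above) to force equality with $G$; I expect the neatest route is to observe $\varphi_1(\psi_1(G')) \supseteq \langle G', \, \varphi_1\text{-images of the displayed commutators}\rangle$, and that this visibly contains $a$ up to $G'$, whence it is all of $G$.

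For part (ii), the argument is parallel but uses Lemma~\ref{notderivedsubgroup}: since $G \in \mathscr{C}\setminus\mathscr{G}$, that lemma gives $\psi_1(\gamma_3(\mathrm{St}_G(1))) = \gamma_3(G)\times\overset{p}{\cdots}\times\gamma_3(G)$ and hence $\gamma_3(G)\times\overset{p}{\cdots}\times\gamma_3(G)\subseteq\psi_1(\gamma_3(G))$, so $\varphi_x(\psi_1(\gamma_3(G)))\supseteq\gamma_3(G)$ for all $x$. To upgrade $\gamma_3(G)$ to $G$ in the first coordinate I would again exploit fractality ($\varphi_1(\mathrm{St}_G(1)) = G$) together with the generators of $\gamma_3(G)$ as a normal subgroup: the elements $([a,c_k,c_l],1,\ldots,1)$, $([a,c_k,a],1,\ldots,1)$, $([c_k,c_l,a],1,\ldots,1)$, $([c_k,c_l,c_m],1,\ldots,1)$ constructed in the proof of Lemma~\ref{notderivedsubgroup} all lie in $\psi_1(\gamma_3(G))$; taking first coordinates of these and of the elements witnessing $\gamma_3(G)$ in the first slot, one obtains a normal subgroup of $G$ containing $\gamma_3(G)$ and various basic commutators, and a short verification shows this subgroup is $G$ itself. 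The main obstacle, and the only step requiring genuine care rather than bookkeeping, is this last promotion from $N \in \{G',\gamma_3(G)\}$ to the full group $G$ in a single coordinate: one must be sure that the ``off-diagonal'' data already proven to lie in $\psi_1(N)$ really does project onto a generating set of $G$ modulo $N$ in that coordinate — this is where I would be most careful to invoke exactly the commutator identities from Lemmas~\ref{notderivedsubgroup} and~\ref{assoc-multi-GGS} rather than hand-wave, and where the hypothesis $G \notin \mathscr{G}$ (excluding the constant-vector multi-GGS groups, which are only weakly regular branch) is essential.
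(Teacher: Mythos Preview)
Your overall strategy is sound and more explicit than the paper's own argument, which simply cites \cite[Lem.~4]{AlejJone} together with \cite[Lem.~2.5]{FAGUA} for~(i) and \cite[Lem.~2.6]{FAGUA} for~(ii). However, the execution of the ``promotion'' step contains a genuine confusion that would leave a gap if carried out as written.

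The elements you single out as witnesses, namely $([a,b^{(j)}_l],1,\ldots,1)$ and $([b^{(j)}_l,b^{(k)}_m],1,\ldots,1)$ in $\psi_1(G')$, have first coordinate lying in $G'$. They therefore contribute nothing to $\varphi_1(G')$ beyond the copy of $G'$ you already obtained from branching, and cannot ``visibly contain $a$ up to $G'$''. You have mixed up two layers: elements of $\psi_1(G')$ of the form $(g,1,\ldots,1)$ with $g\in G'$ (which witness $G'\times\cdots\times G'\subseteq\psi_1(G')$ but no more), versus genuine elements of $G'$ whose image under $\psi_1$ has coordinates outside $G'$. It is the second kind that does the work. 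Concretely, for any directed generator $b=b^{(j)}_l$ one has $[a,b]\in G'$, and the coordinates of $\psi_1([a,b])$ are of the form $a^{e_i-e_{i-1}}$ together with two special coordinates $a^{-\ast}b$ and $b^{-1}a^{\ast}$. Conjugating $[a,b]$ by powers of $a$ moves any chosen coordinate to position~$1$. Since $G\notin\mathscr{G}$, some generator has non-constant defining vector, so some $e_i-e_{i-1}\ne 0$ and hence $a\in\varphi_1(G')$; then from the special coordinate one gets $a^{-\ast}b^{(j)}_l\in\varphi_1(G')$ for every $j,l$, whence $b^{(j)}_l\in\varphi_1(G')$ and $\varphi_1(G')=G$. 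For part~(ii) the same correction applies: you must look at the $\psi_1$-images of actual triple commutators in $\gamma_3(G)$ (or iterate the argument once more), not at the elements $([a,c_k,c_l],1,\ldots,1)$ produced in Lemma~\ref{notderivedsubgroup}, whose first coordinate again lies in $\gamma_3(G)$.
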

\begin{proof}
(i) We note that there is at least one $j\in\{1,\ldots,p\}$ such that $G_j$ is not a GGS-group defined by just the constant vector. The result then follows from considering \cite[Lem.~4]{AlejJone} in combination with the proof of \cite[Lem.~2.5]{FAGUA}.

(ii) Similarly, this follows from \cite[Lem.~2.6]{FAGUA}.
\end{proof}

\begin{lemma}\label{second_derived}
Let $G=\langle a,\mathbf{b}^{(1)}, \ldots, \mathbf{b}^{(p)} \rangle\in \mathscr{R}$ be in standard form. Then
\[
\psi_1(G'')\ge \gamma_3(G)\times \overset{p}{\cdots}\times \gamma_3(G).
\]
\end{lemma}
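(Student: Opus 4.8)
\textbf{Proof proposal for Lemma~\ref{second_derived}.}

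The plan is to exploit the fact that every $G\in\mathscr{R}$ is regular branch over $G'$, so that $G'\times\overset{p}{\cdots}\times G'\subseteq\psi_1(G')$ (by Lemma~\ref{assoc-multi-GGS} and Proposition~\ref{twosymmetric}), and more precisely $\psi_1(\mathrm{St}_G(1)')=G'\times\overset{p}{\cdots}\times G'$. Since $\mathrm{St}_G(1)'\le G''$ is false in general, the first step is to locate enough elements of $G''$ whose images under $\psi_1$ generate $\gamma_3(G)\times 1\times\cdots\times 1$ as a normal subgroup of $G\times\overset{p}{\cdots}\times G$; spherical transitivity then gives the full direct product. Concretely, $\gamma_3(G)$ is generated as a normal subgroup of $G$ by commutators $[g_1,g_2,g_3]$ with $g_1,g_2,g_3$ running over the generating set $\{a\}\cup\{b^{(j)}_i\}$, so it suffices to realise each $([g_1,g_2,g_3],1,\ldots,1)$ in $\psi_1(G'')$.

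The key observation is that $\psi_1(G')$ is a subdirect product of $G\times\overset{p}{\cdots}\times G$ (Lemma~\ref{lem:subdirect}(i)), so for any $g\in G$ there exists $h\in G'$ with $\psi_1(h)=(g,*,\ldots,*)$, and similarly there exist elements of $G'$ projecting to $(1,g,*,\ldots,*)$ in the second coordinate, etc. Now take $x,y,z$ among the generators of $G$. Using regular branching, pick $\widetilde{x},\widetilde{y},\widetilde{z}\in G'$ with $\psi_1(\widetilde{x})=([x,?],1,\ldots,1)$-type entries — more usefully, pick $\widetilde{x}\in G'$ with $\psi_1(\widetilde{x})=(x',1,\ldots,1)$ where $x'$ is an appropriate element; the cleanest route is: since $G'\times 1\times\cdots\times 1\subseteq\psi_1(\mathrm{St}_G(1)')$ and $\mathrm{St}_G(1)'\le G'$, for any $u,v\in G$ we may find $w\in\mathrm{St}_G(1)'\le G'$ with $\psi_1(w)=([u,v],1,\ldots,1)$. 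Then for a third generator $z$, using subdirectness choose $s\in G'$ with $\psi_1(s)=(z,*,\ldots,*)$. The commutator $[w,s]$ lies in $[G',G']=G''$, and $\psi_1([w,s])=([[u,v],z],1,\ldots,1)$ since all coordinates of $\psi_1(w)$ except the first are trivial. As $[[u,v],z]$ ranges over the needed generators of $\gamma_3(G)$ (modulo the Hall--Witt/Jacobi identity, which lets us rewrite $[z,[u,v]]$-type commutators and reorder the entries of a left-normed commutator up to elements already accounted for), this produces $\gamma_3(G)\times1\times\cdots\times1\subseteq\psi_1(G'')$.

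I expect the main obstacle to be bookkeeping rather than conceptual: one must check that the commutators $[[u,v],z]$ with $u,v,z$ generators genuinely suffice to generate $\gamma_3(G)$ as a normal subgroup — this is standard since $\gamma_3(G)=[G',G]$ and $G'$ is generated by the $[u,v]$ together with conjugates, but care is needed because $\psi_1(w)$ is only guaranteed trivial in coordinates $2,\ldots,p$ when $w\in\mathrm{St}_G(1)'$, whereas to commute with an arbitrary $s\in G'$ projecting onto $z$ in the first coordinate we need $s\in\mathrm{St}_G(1)$; this is fine because $\mathrm{St}_G(1)$ has index $p$ in $G$ and $\psi_1(\mathrm{St}_G(1))$ still surjects onto $G$ in each coordinate by fractality, so $s$ can be taken in $\mathrm{St}_G(1)$. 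A secondary point is handling the case $G\in\mathscr{R}$ coming from Proposition~\ref{twosymmetric} as opposed to Lemma~\ref{assoc-multi-GGS}: in both cases $\psi_1(\mathrm{St}_G(1)')=G'\times\overset{p}{\cdots}\times G'$ holds, so the argument is uniform, and one only needs that $G\notin\mathscr{G}$ (true since $\mathscr{R}\cap\mathscr{G}=\varnothing$) to invoke Lemma~\ref{lem:subdirect}(i). Finally, once $\gamma_3(G)\times1\times\cdots\times1\subseteq\psi_1(G'')$ is established, normality of $\psi_1(G'')$ in $\psi_1(G)$ together with spherical transitivity (the entries may be permuted cyclically by conjugating by $a$) yields $\gamma_3(G)\times\overset{p}{\cdots}\times\gamma_3(G)\subseteq\psi_1(G'')$, completing the proof.
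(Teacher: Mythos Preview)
Your argument is correct and is precisely the paper's proof unpacked: the paper simply writes ``This follows from Lemma~\ref{lem:subdirect}(i) and the fact that $G$ is regular branch over~$G'$,'' which encodes exactly the mechanism you describe --- take $w\in G'$ with $\psi_1(w)=(c,1,\ldots,1)$ for $c\in G'$ (regular branch over $G'$), take $s\in G'$ with $\psi_1(s)=(z,*,\ldots,*)$ (subdirectness), and observe $[w,s]\in G''$ with $\psi_1([w,s])=([c,z],1,\ldots,1)$; since the elements $[c,z]$ with $c\in G'$, $z\in G$ generate $\gamma_3(G)$, one obtains $\gamma_3(G)\times 1\times\cdots\times 1\subseteq\psi_1(G'')$, and then cyclic permutation of coordinates (conjugation by $a$) finishes it.

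One minor simplification: your worry about needing $s\in\mathrm{St}_G(1)$ is unnecessary, since $G/\mathrm{St}_G(1)\cong C_p$ is abelian and hence $G'\le\mathrm{St}_G(1)$ automatically; so any $s\in G'$ furnished by Lemma~\ref{lem:subdirect}(i) already lies in $\mathrm{St}_G(1)$. Also, because you may take $c$ to be an \emph{arbitrary} element of $G'$ (not just a commutator of generators), no appeal to Hall--Witt or to normal generation is needed: the set $\{[c,z]:c\in G',\,z\in G\}$ already generates $\gamma_3(G)$ as a subgroup.
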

\begin{proof}
This follows from Lemma \ref{lem:subdirect}(i) and the fact that $G$ is regular branch over $G'$.
\end{proof}

Let $\mathscr{S}\subseteq\mathscr{C}$ denote the subclass of multi-EGS groups that are conjugate to a multi-EGS group in standard form
$\langle a,\mathbf{b}^{(1)}, \ldots, \mathbf{b}^{(p)}\rangle$, where all defining vectors are symmetric and 
$\mathbf{b}^{(j)}=\{ b^{(j)}\}$ for every non-empty family of directed automorphisms. Further, as mentioned in the introduction, let $\mathscr{E}\subseteq\mathscr{R}$ denote the subclass of multi-EGS groups that are conjugate to a $3$-generator multi-EGS group of standard form
$\langle a,b^{(j)},  b^{(k)}\rangle$, for some distinct $j, k\in \{1,\ldots,p\}$, with linearly independent symmetric defining vectors $(e_1,\ldots,e_{p-1})$ and $(f_1,\ldots,f_{p-1})$ satisfying the following condition: subject to replacing the generators $b^{(j)},  b^{(k)}$ with suitable powers, we have that
 $e_i, f_i\in \{0,1\}$ with $e_i\ne f_i$ for all $1\le i\le p-1$. Note that for $p=3$ the subclass $\mathscr{E}$ is empty.

\begin{proposition}\label{key}
 Let
  $G= \langle a,\mathbf{b}^{(1)},\ldots, \mathbf{b}^{(p)} \rangle \in
  \mathscr{C}\backslash (\mathscr{E}\cup\mathscr{G})$
  be in standard form. Then $\textup{St}_G(1)'\le \gamma_3(G)$.
\end{proposition}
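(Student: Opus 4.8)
The plan is to show the reverse containment $\mathrm{St}_G(1)'\le\gamma_3(G)$ by exploiting the fact that, for $G\notin\mathscr{E}\cup\mathscr{G}$, the quotient $G/\gamma_3(G)$ is severely constrained, and that $\mathrm{St}_G(1)$ is generated modulo $\gamma_3(G)$ by a small explicit set of elements whose pairwise commutators all land in $\gamma_3(G)$. Concretely, I would first recall the standard description of $\mathrm{St}_G(1)$: it is generated by the directed automorphisms $b^{(j)}_i$ together with their conjugates $(b^{(j)}_i)^{a^s}$, $0\le s\le p-1$ (equivalently, $\mathrm{St}_G(1)=\langle (b^{(j)}_i)^{a^s}\mid j,i,s\rangle$ since $G/\mathrm{St}_G(1)\cong C_p$ is generated by the image of $a$). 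Thus modulo $\gamma_3(G)$, the group $\mathrm{St}_G(1)$ is generated by these conjugates, and $\mathrm{St}_G(1)'\gamma_3(G)/\gamma_3(G)$ is generated by the commutators $[(b^{(j)}_i)^{a^s},(b^{(k)}_l)^{a^t}]$ read in the abelian group $G'/\gamma_3(G)$.

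The key computation is then to express each such commutator in $G'/\gamma_3(G)$ in terms of the ``basic'' commutators $[a,b^{(j)}_i]$ modulo $\gamma_3(G)$, using the bilinearity of the commutator map $G/G'\times G/G'\to G'/\gamma_3(G)$. Since $G/G'$ is generated by the images of $a$ and the $b^{(j)}_i$, the group $G'/\gamma_3(G)$ is spanned by the classes of $[a,b^{(j)}_i]$ and of $[b^{(j)}_i,b^{(k)}_l]$; but the latter vanish modulo $\gamma_3(G)$ precisely because the $b^{(j)}_i$ within one family commute and the cross-family commutators $[b^{(j)}_i,b^{(k)}_l]$ are in $\psi_1^{-1}$ of things that, after unravelling, can be pushed into $\gamma_3(G)$ — this is where one invokes the description of $\gamma_3(G)$ from Lemmas~\ref{notderivedsubgroup}, \ref{assoc-multi-GGS}, Proposition~\ref{twosymmetric} and the computations therein. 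Hence, modulo $\gamma_3(G)$ every element of $\mathrm{St}_G(1)'$ is a product of conjugates and powers of the elements $[a,b^{(j)}_i]$; and $[(b^{(j)}_i)^{a^s},(b^{(k)}_l)^{a^t}]\equiv 0$ in $G'/\gamma_3(G)$ by the same bilinearity argument, since both arguments lie in $\mathrm{St}_G(1)$ and hence differ from $b^{(j)}_i,b^{(k)}_l$ only by elements of $G'$, which contribute terms in $\gamma_3(G)$. It remains to rule out that some $[a,b^{(j)}_i]$ itself occurs in $\mathrm{St}_G(1)'$ but not in $\gamma_3(G)$: but $[a,b^{(j)}_i]\notin\mathrm{St}_G(1)$ is false — rather one checks directly that the image of $\mathrm{St}_G(1)'$ in $G'/\gamma_3(G)$ is trivial, i.e.\ $\mathrm{St}_G(1)'\le\gamma_3(G)$, by confirming that the $\mathbb{F}_p$-span of the commutators $[(b^{(j)}_i)^{a^s},(b^{(k)}_l)^{a^t}]$ inside $G'/\gamma_3(G)$ is zero.

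I expect the main obstacle to be the bookkeeping of exactly which linear combinations of the $[a,b^{(j)}_i]$ survive in $G'/\gamma_3(G)$: one needs a clean handle on the structure of $G'/\gamma_3(G)$ as an $\mathbb{F}_p[a]$-module, and in particular on how the conjugation action of $a$ moves the generators around, so as to verify that the ``diagonal'' commutators from $\mathrm{St}_G(1)$ all cancel. The exclusion of the classes $\mathscr{E}$ and $\mathscr{G}$ is precisely what makes this cancellation work: for $G\in\mathscr{G}$ (constant vectors) the group is only weakly regular branch and $\gamma_3(G)$ is too small, while for $G\in\mathscr{E}$ the special arithmetic relation $e_i\ne f_i$, $e_i,f_i\in\{0,1\}$, produces an extra relation in $G'/\gamma_3(G)$ that would obstruct the inclusion. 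Once the module-theoretic description is in place — which can be extracted from \cite{KT} together with the explicit section computations in the proofs of Lemmas~\ref{notderivedsubgroup} and \ref{assoc-multi-GGS} — the containment follows formally.
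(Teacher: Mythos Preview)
Your opening reduction is exactly the paper's: since $\mathrm{St}_G(1)$ is normally generated by the $b^{(j)}_i$, and since $[(b^{(j)}_i)^g,(b^{(k)}_l)^h]\equiv[b^{(j)}_i,b^{(k)}_l]\pmod{\gamma_3(G)}$, everything comes down to proving $[b^{(j)}_i,b^{(k)}_l]\in\gamma_3(G)$ for $j\neq k$. So far so good.

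The gap is in what comes next. You assert that these cross-commutators ``are in $\psi_1^{-1}$ of things that, after unravelling, can be pushed into $\gamma_3(G)$'' and point to Lemmas~\ref{notderivedsubgroup}, \ref{assoc-multi-GGS} and Proposition~\ref{twosymmetric}. But those results establish that $G$ is regular branch over $\gamma_3(G)$ or $G'$; they do \emph{not} say anything about $[b^{(j)}_i,b^{(k)}_l]$ lying in $\gamma_3(G)$. Concretely, if you compute $\psi_1([b^{(j)}_i,b^{(k)}_l])$ directly, the non-trivial components are of the form $[b^{(j)}_i,a^*]$ and $[a^*,b^{(k)}_l]$, which are precisely the generators of $G'/\gamma_3(G)$ --- they are \emph{not} in $\gamma_3(G)$, so appealing to $\gamma_3(G)\times\cdots\times\gamma_3(G)\subseteq\psi_1(\gamma_3(G))$ gets you nowhere. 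The claim that ``the containment follows formally'' from a module description is therefore unsupported: there is genuine work hidden here, and it is not bookkeeping.

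What the paper actually does is a three-way case split (according to whether $G\notin\mathscr{S}$, $G\in\mathscr{S}\setminus\mathscr{R}$, or $G\in\mathscr{S}\cap\mathscr{R}$), and in each case constructs, by hand, explicit elements of $\gamma_3(G)$ --- either by invoking Lemma~\ref{Lemma 1.2} for the associated multi-GGS subgroups when a non-symmetric vector is present, or by building specific triple commutators of carefully chosen conjugates and reading off their $\psi_1$-images --- whose image under $\psi_1$ matches that of $[b^{(j)}_i,b^{(k)}_l]$ up to elements already known to lie in $\psi_1(\gamma_3(G))$. The exclusion of $\mathscr{E}$ appears exactly in Case~3: after reducing to two linearly independent symmetric vectors, the combinatorial argument forces the components to lie in $\{0,1\}$ with complementary supports, which is the $\mathscr{E}$ configuration, and there (as Lemma~\ref{lem:exception} confirms) the inclusion genuinely fails. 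Your proposal contains none of this structure; you would need to supply it before the proof is complete.
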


\begin{proof}
 Note that $\text{St}_G(1)$ is normally generated by ${b}^{(j)}_i$ for all $1\le j \le p$ and $1\le i\le r_j$. Further, 
 for $g,h\in G$,
 \begin{align*}
  [(b^{(j)}_i)^g, (b^{(k)}_l)^h]&=[b^{(j)}_i[b^{(j)}_i,g], b^{(k)}_l[b^{(k)}_l,h]]\\
  &\equiv [b^{(j)}_i,b^{(k)}_l]\qquad \text{mod }\gamma_3(G),
 \end{align*}
hence it suffices to show that $[b^{(j)}_i,b^{(k)}_l]\in \gamma_3(G)$ for all $j\ne k$ with $1\le i\le r_j$, $1\le l\le r_k$.

 \underline{Case 1:} $G\not \in \mathscr{S}$.
 
 First suppose that both $b^{(j)}_i$ and $b^{(k)}_l$ are defined by non-symmetric  vectors $(e_1,\ldots ,e_{p-1})$
  and $(f_1,\ldots,f_{p-1})$ respectively. Then, from Lemma 3.1, the subgroup
 $G_j=\langle a, \mathbf{b}^{(j)}\rangle$ satisfies
 \[
  \psi_1^{-1}(G_j'\times \overset{p}\cdots \times G_j')=\text{St}_{G_j}(1)'\le \gamma_3(G_j)
 \]
 and likewise for $G_k$. Hence
 \[
  \psi_1^{-1}(( 1,\ldots,1 , [b^{(j)}_i, a^{f_{p-1}}])) \in \gamma_3(G_j)\le \gamma_3(G),
  \]
 and
 \[
  \psi_1^{-1}( ([a^{e_1}, b^{(k)}_l],1,\ldots,1 ))\in \gamma_3(G_k)\le \gamma_3(G).
  \]
Therefore
 \begin{align*}
  [b^{(j)}_i,b^{(k)}_l]&\equiv [(b^{(j)}_i)^{a^{j-1}},(b^{(k)}_l)^{a^k}]\quad \text{mod }\gamma_3(G)\\
  &=\psi_1^{-1}( ([a^{e_1}, b^{(k)}_l],1,\ldots,1 , [b^{(j)}_i, a^{f_{p-1}}])) \in \gamma_3(G),
 \end{align*}
 as required.

Next suppose  that $b^{(j)}_i$ is defined by a non-symmetric  vector $(e_1,\ldots ,e_{p-1})$
  and  $b^{(k)}_l$ is defined by a symmetric  vector $(f_1,\ldots,f_{p-1})$.
 Then there is some $t\in \{1,\ldots, p-1\}$ such that $e_t\equiv_p \alpha f_t$ and $e_{p-t}\equiv_p\beta f_{p-t}$ with distinct $\alpha,
 \beta \in\{0,1,\ldots, p-1\}$. Then
 \[
  \psi_1((b^{(j)}_i)^{a^j}(b^{(k)}_l)^{-\alpha a^k})=(b^{(j)}_i(b^{(k)}_l)^{-\alpha},*,\ldots, *,1,*,\ldots,*)
 \]
 and 
\[
  \psi_1((b^{(j)}_i)^{a^j}(b^{(k)}_l)^{-\beta a^k})=(b^{(j)}_i(b^{(k)}_l)^{-\beta},*,\ldots, *,1,*,\ldots,*)
 \]
 where, in the first case, the 1 is at position $t+1$ and, in the second case, at position $p-t+1$. Thus
 \begin{align*}
  1&= [(b^{(j)}_i)^{a^j}(b^{(k)}_l)^{-\alpha a^k}, \big( (b^{(j)}_i)^{a^j}(b^{(k)}_l)^{-\beta a^k}\big)^{a^{t}}  ]\\
  &\equiv [b^{(j)}_i, (b^{(k)}_l)^{-\beta}][ (b^{(k)}_l)^{-\alpha}, b^{(j)}_i ] \qquad\qquad\,\,\quad\,\,\, \text{mod }\gamma_3(G)\\
  &\equiv [b^{(j)}_i, b^{(k)}_l]^{\alpha -\beta}\qquad\qquad \qquad\qquad\qquad\qquad \text{mod }\gamma_3(G)
 \end{align*}
and since $G'/\gamma_3(G)$ is of exponent $p$, we are done by taking a suitable power of the above.

 \underline{Case 2:} $G\in \mathscr{S}\backslash (\mathscr{S}\cap \mathscr{R})$.
 
 First observe that 
 \begin{align*}
  [b^{(j)},b^{(k)}]&\equiv [(b^{(j)})^{a^{j}}, (b^{(k)})^{a^k}]\qquad \text{mod }\gamma_3(G)\\
  &=\psi_1^{-1}(([b^{(j)},b^{(k)}],1,\overset{p-1}\ldots,1)),
 \end{align*}
 hence it suffices to show that
 $
  \psi_1^{-1}(([b^{(j)},b^{(k)}],1,\overset{p-1}\ldots,1))\in \gamma_3(G).
$

 As $G \in \mathscr{S}\backslash (\mathscr{S}\cap \mathscr{R})$, we have that $b^{(j)}$ and $b^{(k)}$ are defined by the same non-constant vector.
 Write $(e_1,\ldots, e_{p-1})$ for this defining vector. Without loss of 
 generality (cf. \cite[Lem.~3.1]{KT}), we may suppose that $e_1\ne 0$.
 Let $s\in \{1,\ldots,p-2\}$ be minimal such that $e_s\ne e_{s+1}$. Consider the following two elements:
 \[
  \psi_1([(b^{(j)})^{a^{j-1}},a])=(a^{-e_1}b^{(j)}, a^{e_1-e_2},\ldots, a^{e_{p-2}-e_{p-1}}, (b^{(j)})^{-1} a^{e_{p-1}} )\qquad
 \]
and 
\[
 \psi_1([(b^{(j)})^{a^{j-1}},a]^{a^{-s}})=\qquad\qquad\qquad\qquad\qquad\qquad\qquad\qquad\qquad\qquad\qquad
 \]
 \[
 (a^{e_{s}-e_{s+1}},\ldots , a^{e_{p-2}-e_{p-1}}, (b^{(j)})^{-1} a^{e_{p-1}}, a^{-e_1}b^{(j)}, 
 a^{e_1-e_2},\ldots, a^{e_{s-1}-e_{s}} ).
\]
Now there exists an $m\in \{1,\ldots,p-1\}$ such that $(e_s-e_{s+1})m \equiv_p e_1$. Writing $d$ for 
\[
 ([(b^{(j)})^{a^{j-1}},a]^{a^{-s}})^m[(b^{(j)})^{a^{j-1}},a]\in G',
\]
we see that
\[
 1\equiv \psi_1([d, (b^{(k)})^{a^k} (b^{(j)})^{-a^j}])\equiv
 ([b^{(j)},b^{(k)}],1,\overset{p-1}\ldots,1) \quad \text{mod }\gamma_3(G).
\]
Hence we are done.

\underline{Case 3:} $G\in \mathscr{S}\cap \mathscr{R}$.

Let $J=\{j\in \{1,\ldots,p\} \mid r_j\ne 0\}$.
Suppose $l\in J$ and write $(g_1,\ldots,g_{p-1})$ for the defining vector of $b^{(l)}$. As before, we may assume that $g_1\ne 0$, and let $s(l)\in \{1,\ldots, p-2\}$ be minimal such that $g_{s(l)}\ne g_{s(l)+1}$. Write $m(l)$ for the non-zero element of $\mathbb{F}_p$ satisfying $(g_{s(l)}-g_{s(l)+1})m(l)\equiv_p g_1$. We write 
\[
d(l):=([(b^{(l)})^{a^{l-1}},a]^{a^{-s(l)}})^{m(l)}[(b^{(l)})^{a^{l-1}},a]\in G'.
\]
 
Now if there exists distinct $j, k\in J$ such that the defining vectors of $b^{(j)}$ and $b^{(k)}$ are linearly dependent, then replacing $b^{(k)}$ with a suitable power, we may assume that the two defining vectors are equal. The previous case yields $[b^{(j)},b^{(k)}]\in \gamma_3(G)$ and furthermore
\[
\psi_1([d(l),(b^{(j)})^{a^{j}}(b^{(k)})^{-a^k}])=([b^{(l)},b^{(j)}(b^{(k)})^{-1}],1,\ldots,1)\equiv 1\quad \text{ mod }\psi_1(\gamma_3(G)).
\]    
Hence
\[
([b^{(l)},b^{(j)}],1,\ldots,1)\equiv ([b^{(l)},b^{(k)}],1,\ldots,1) \quad \text{mod }\gamma_3(G).
\]
Therefore we may restrict to the case where the defining vectors associated to  $b^{(j)}$, for  $j\in J$,  are pairwise linearly independent.

Suppose that there are three directed automorphisms $b^{(j)}, b^{(k)}, b^{(l)}$ which have pairwise linearly independent defining vectors. Then without loss of generality, we may assume that
\begin{align*}
\psi_1( (b^{(j)})^{a^j}  )&=(b^{(j)},a,a^{e_2},\ldots, a^{e_{p-2}},a),\\
\psi_1( (b^{(k)})^{a^k}  )&=(b^{(k)},a^{f_1},a^{f_2},\ldots, a^{f_{p-2}},a^{f_{p-1}}),\\
\psi_1( (b^{(l)})^{a^l}  )&=(b^{(l)},a^{g_1},a^{g_2},\ldots, a^{g_{p-2}},a^{g_{p-1}}),
\end{align*}
for some exponents $e_i,f_i,g_i\in \mathbb{F}_p$ subject to the vectors being symmetric and $e_1=1$.

The linear independence of the defining vectors implies that one may find
\[
A=\left(\begin{array}{ccc}
x_1 & x_2 & x_3\\
y_1 & y_2 & y_3\\
z_1 & z_2 & z_3
\end{array}\right)\in \text{GL}(3,p),
\]
such that
\begin{align*}
x&:=((b^{(j)})^{a^j})^{x_1} ( (b^{(k)})^{a^k}  )^{x_2}( (b^{(l)})^{a^l}  )^{x_3}, \\
y&:=((b^{(j)})^{a^j})^{y_1} ( (b^{(k)})^{a^k})^{y_2}  ( (b^{(l)})^{a^l}  )^{y_3}, \\
z&:=((b^{(j)})^{a^j})^{z_1}  ((b^{(k)})^{a^k})^{z_2} ((b^{(l)})^{a^l})^{z_3},
\end{align*}
satisfies
\begin{align*}
\psi_1( x  )&=((b^{(j)})^{x_1} ( b^{(k)} )^{x_2}( b^{(l)}  )^{x_3},a,*,\overset{n}{\ldots}, *,1, *,\overset{m}\ldots, *, 1,*, \ldots, *),\\
\psi_1( y )&=((b^{(j)})^{y_1} (b^{(k)})^{y_2}  ( b^{(l)}  )^{y_3},1,*,\overset{n}\ldots, *,a, *,\overset{m}\ldots, *, 1,*, \ldots, *),\\
\psi_1( z )&=((b^{(j)})^{z_1}  (b^{(k)})^{z_2} (b^{(l)})^{z_3},1,*,\overset{n}\ldots, *,1, *,\overset{m}\ldots, *, a,*, \ldots, *),
\end{align*}
where $n+m+3\le  \frac{p-1}{2}$ and $*$ are unspecified powers of $a$. Now the exponents of the $a$'s in $\psi_1(x),\psi_1(y),\psi_1(z)$ still form symmetric vectors. Hence
\begin{align*}
1= [x,y^{a^{n+m+3}}]&\equiv[x,y^{a}]\equiv [x^{a^{n+2}},y] \quad\text{mod }\gamma_3(G),\\
1= [x,z^{a^{n+2}}] \quad\,&\equiv [x,z^{a}]\quad\text{mod }\gamma_3(G),
\end{align*} 

We have
\begin{align*}
 \psi_1([y,x^{a^{n+2}}])&\equiv ([a,b^{(j)}]^{x_1}[a,b^{(k)}]^{x_2}[a,b^{(l)}]^{x_3},1,\ldots,1) \quad\text{mod }\psi_1(\gamma_3(G)), \\
    \psi_1([x,y^{a}])&\equiv ([a,b^{(j)}]^{y_1}[a,b^{(k)}]^{y_2}[a,b^{(l)}]^{y_3},1,\ldots,1) \quad\text{mod }\psi_1(\gamma_3(G)), \\
     \psi_1([x,z^{a}])&\equiv ([a,b^{(j)}]^{z_1}[a,b^{(k)}]^{z_2}[a,b^{(l)}]^{z_3},1,\ldots,1) \quad\text{mod }\psi_1(\gamma_3(G)). 
\end{align*}
Thus, as~$A$ has full rank, one then deduces that
\[
([a,b^{(j)}],1,\ldots,1), ([a,b^{(k)}],1,\ldots,1), ([a,b^{(l)}],1,\ldots,1)\in \psi_1(\gamma_3(G)),
\]
and the result follows.

\smallskip

Therefore it suffices to consider the case when $G$ has only two directed automorphisms $b^{(j)},b^{(k)}$ with linearly independent symmetric defining vectors.  Again we may assume that 
\begin{align*}
\psi_1( (b^{(j)})^{a^j}  )&=(b^{(j)},a,a^{e_2},\ldots, a^{e_{p-2}},a),\\
\psi_1( (b^{(k)})^{a^k}  )&=(b^{(k)},a^{f_1},a^{f_2},\ldots, a^{f_{p-2}},a^{f_{p-1}}).
\end{align*}
As before, there is a matrix 
\[
B=\left(\begin{array}{cc}
x_1 & x_2\\
y_1 & y_2
\end{array}\right)\in \text{GL}(2,p),
\]
and $x,y\in \text{St}_G(1)$ such that
\begin{align*}
\psi_1( x  )&=((b^{(j)})^{x_1} ( b^{(k)} )^{x_2},a,a^{g_2},\ldots,a^{g_s},1, *,\ldots, *),\\
\psi_1( y )&=((b^{(j)})^{y_1} (b^{(k)})^{y_2} ,1\, ,\quad\overset{s}\ldots\quad , \, 1\,,\,a\, , *, \ldots, *),
\end{align*} 
for some $1\le s\le \frac{p-3}{2}$ and $g_2,\ldots, g_s\in \mathbb{F}_p$.

If there is a $3\le n \le p$ such that the $n$th components of $\psi_1(x)$ and $\psi_1(y)$ are both trivial, then we may argue as in the case of three directed automorphisms above. Hence we assume that no such $n$ exists.

Next, if $s>1$ and there is a $g_i\ne 0,1$ for $2\le i\le s$, then this implies that
\begin{align*}
\psi_1([x,y])&\equiv \psi_1([x^{a^{-1}},y])\qquad\qquad\qquad \qquad\,\,\,\,\quad\text{mod }\psi_1(\gamma_3(G))\\
&\equiv([a,(b^{(j)})^{y_1} (b^{(k)})^{y_2}],1,\ldots,1 )\qquad \,\,\,\,\,\text{mod }\psi_1(\gamma_3(G))\\
&\equiv \psi_1([x^{a^{-i}},y])\qquad\qquad\qquad \qquad\!\!\qquad\text{mod }\psi_1(\gamma_3(G))\\
&\equiv ([a,(b^{(j)})^{y_1} (b^{(k)})^{y_2}]^{g_i},1,\ldots,1 ) \qquad \text{mod }\psi_1(\gamma_3(G)),
\end{align*}
which yields
\[
\psi_1([x,y])\equiv ([a,(b^{(j)})^{y_1} (b^{(k)})^{y_2}],1,\ldots,1 )\equiv 1\qquad\text{mod }\psi_1(\gamma_3(G)).
\]
This, combined with the fact that $\psi_1([x,y])\equiv ([(b^{(j)})^{x_1} (b^{(k)})^{x_2},a],1,\ldots,1 )$ modulo $\psi_1(\gamma_3(G))$, yields $([b^{(j)},a],1,\ldots,1 )\equiv 1$ modulo $\psi_1(\gamma_3(G))$, and the result follows. [Note that the case when $b^{(k)}$, or $b^{(j)}$, is defined by the constant vector can be settled in this manner.]

Thus we are left with the case when $g_2,\ldots, g_s=1$, and for the same reason, for each $n>s+1$, the $n$th components of $\psi_1(x)$ and $\psi_1(y)$ form the set $\{1,a\}$.
However, these are exactly the groups in $\mathscr{E}$, which are excluded from this result.
\end{proof}

\begin{lemma}\label{lem:exception}
Let $G=\langle a, b^{(j)},  b^{(k)}\rangle\in \mathscr{E}$, for some distinct $j, k\in \{1,\ldots,p\}$, be in standard form. Then $[b^{(j)},  b^{(k)}]\not\in \gamma_3(G)$ and in particular, we have  $\textup{St}_G(1)'\not\le \gamma_3(G)$.
\end{lemma}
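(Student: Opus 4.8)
The plan is to compute $\psi_1\big([b^{(j)},b^{(k)}]\big)$ explicitly and then to exploit that, for a group in $\mathscr{E}$, this commutator degenerates to a single section of ``directed'' type $[a,b^{(\ell)}]^{\pm1}$, which is too rigid to be absorbed by $\gamma_3(G)$.

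First I would fix a standard form; by definition of $\mathscr{E}$, after replacing $b^{(j)},b^{(k)}$ by suitable powers we may assume that the (symmetric) defining vectors $(e_1,\dots,e_{p-1})$ of $b^{(j)}$ and $(f_1,\dots,f_{p-1})$ of $b^{(k)}$ satisfy $e_i,f_i\in\{0,1\}$ and $e_i+f_i=1$ for every $i$. In particular neither vector is constant, so $G\in\mathscr{C}\setminus\mathscr{G}$; and since $\mathscr{E}\subseteq\mathscr{R}$, the group $G$ is regular branch over $G'$ with $\psi_1(\textup{St}_G(1)')=G'\times\overset{p}{\cdots}\times G'$ (Lemma~\ref{assoc-multi-GGS} or Proposition~\ref{twosymmetric}) and regular branch over $\gamma_3(G)$ with $\psi_1(\gamma_3(\textup{St}_G(1)))=\gamma_3(G)\times\overset{p}{\cdots}\times\gamma_3(G)$ (Lemma~\ref{notderivedsubgroup}); recall also that $\psi_1$ is injective on $\textup{St}_G(1)$. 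Now $b^{(j)},b^{(k)}\in\textup{St}_G(1)$ are directed along disjoint paths, so $\psi_1([b^{(j)},b^{(k)}])$ is trivial at every level-$1$ vertex except the two, say $u$ and $v$, on these paths, where the components are $[b^{(j)},a^{f_s}]$ and $[a^{e_{p-s}},b^{(k)}]=[a^{e_s},b^{(k)}]$ (by symmetry) for the offset $s$ between the paths. As $\{e_s,f_s\}=\{0,1\}$, exactly one survives, so
\[
\psi_1\big([b^{(j)},b^{(k)}]\big)=\big(1,\dots,1,\,[a,b^{(\ell)}]^{\pm1},\,1,\dots,1\big),
\]
with the lone non-trivial entry at the level-$1$ vertex on the path of $b^{(\ell)}$, where $\ell=k$ if $e_s=1$ and $\ell=j$ if $e_s=0$. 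Since $[b^{(j)},b^{(k)}]\in\textup{St}_G(1)'$, once we know $[b^{(j)},b^{(k)}]\notin\gamma_3(G)$ we obtain $\textup{St}_G(1)'\not\le\gamma_3(G)$ at once (in fact the two are equivalent, because modulo $\gamma_3(G)$ every commutator of the generators $(b^{(j)})^{a^i},(b^{(k)})^{a^i}$ of $\textup{St}_G(1)$ reduces to $[b^{(j)},b^{(k)}]^{\pm1}$ or $1$). So it suffices to prove $[b^{(j)},b^{(k)}]\notin\gamma_3(G)$.

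Suppose, for a contradiction, that $[b^{(j)},b^{(k)}]\in\gamma_3(G)$. For any $s'$ we have $(b^{(k)})^{a^{s'}}=b^{(k)}[b^{(k)},a^{s'}]$ with $[b^{(k)},a^{s'}]\in G'$, hence $[b^{(j)},(b^{(k)})^{a^{s'}}]\equiv[b^{(j)},b^{(k)}]\equiv1\pmod{\gamma_3(G)}$, so each $[b^{(j)},(b^{(k)})^{a^{s'}}]$ still lies in $\gamma_3(G)$; and the computation above, applied to the conjugate $(b^{(k)})^{a^{s'}}$ in place of $b^{(k)}$, shows $\psi_1([b^{(j)},(b^{(k)})^{a^{s'}}])$ is again supported at a single level-$1$ vertex, with section $[a,b^{(j)}]^{\pm1}$ or $[a,b^{(k)}]^{\pm1}$ according to the value of $(e_1,\dots,e_{p-1})$ at the relevant offset. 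As $(e_1,\dots,e_{p-1})$ is non-constant with entries in $\{0,1\}$, letting $s'$ vary over the values keeping the two paths disjoint produces, for each $\ell\in\{j,k\}$, an element of $\gamma_3(G)$ whose $\psi_1$-image is supported at one level-$1$ vertex with section $[a,b^{(\ell)}]^{\pm1}$. The crux is then to show that
\[
\psi_1\big(\gamma_3(G)\big)\cap\big(1\times\overset{p}{\cdots}\times G\times\overset{p}{\cdots}\times1\big)=1\times\overset{p}{\cdots}\times\gamma_3(G)\times\overset{p}{\cdots}\times1
\]
in each coordinate, i.e. that a $\gamma_3(G)$-element with $\psi_1$-support at a single level-$1$ vertex has its section there in $\gamma_3(G)$. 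Granting this, $[a,b^{(j)}],[a,b^{(k)}]\in\gamma_3(G)$, whence $G'/\gamma_3(G)=\langle[b^{(j)},b^{(k)}]\gamma_3(G)\rangle=1$ (using the hypothesis again); so $G'=\gamma_3(G)$ and thus $\gamma_n(G)=G'$ for all $n\ge2$. But $G/\textup{St}_G(2)$ is a finite $p$-group, hence nilpotent of some class $c$, so $G'=\gamma_{c+1}(G)\subseteq\textup{St}_G(2)$, contradicting $[a,b^{(j)}]\notin\textup{St}_G(2)$ (which holds since $\psi_1([a,b^{(j)}])$ has a section $a^{e_t-e_{t+1}}\ne1$ for some $t$, $(e_1,\dots,e_{p-1})$ being non-constant).

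The displayed identity for $\psi_1(\gamma_3(G))$ at a single coordinate — equivalently $\gamma_3(G)\cap\textup{St}_G(1)'\le\gamma_3(\textup{St}_G(1))$ — is the heart of the matter and the place where membership in $\mathscr{E}$ is really used. To prove it one needs a description of $\psi_1(\gamma_3(G))$ precise enough to rule out any single-vertex-supported element with section outside $\gamma_3(G)$: since $\psi_1(\gamma_3(G))$ lies between $\gamma_3(G)\times\overset{p}{\cdots}\times\gamma_3(G)$ and $\psi_1(G')$, and — by regular-branchness over $G'$ — the latter is generated over $G'\times\overset{p}{\cdots}\times G'$ by $\psi_1([a,b^{(j)}])$ and $\psi_1([a,b^{(k)}])$, this reduces to an $\mathbb{F}_p$-linear computation governed by the linearly independent vectors $(e_1,\dots,e_{p-1})$ and $(f_1,\dots,f_{p-1})$ — the same bookkeeping as in Case~3 of the proof of Proposition~\ref{key}, now run in reverse, to forbid the production of $[a,b^{(\ell)}]^{\pm1}$ rather than to achieve it.
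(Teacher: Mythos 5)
Your reduction of the problem is fine up to a point: the computation of $\psi_1([b^{(j)},b^{(k)}])$ as a tuple supported at a single first-level vertex with section $[a,b^{(\ell)}]^{\pm1}$ is correct, as is the observation that (under the contradiction hypothesis) varying the conjugate $(b^{(k)})^{a^{s'}}$ produces, for each $\ell\in\{j,k\}$, an element of $\gamma_3(G)$ whose $\psi_1$-image is supported at one vertex with section $[a,b^{(\ell)}]^{\pm1}$; the final step ($G'=\gamma_3(G)$ contradicts nilpotence of $G/\textup{St}_G(2)$ together with $[a,b^{(j)}]\notin\textup{St}_G(2)$) is also sound. But the entire argument rests on your ``crux'' claim
\[
\psi_1(\gamma_3(G))\cap\bigl(G\times 1\times\overset{p-1}{\cdots}\times 1\bigr)=\gamma_3(G)\times 1\times\overset{p-1}{\cdots}\times 1,
\]
which you state without proof, and this claim is at least as deep as the lemma itself. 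Concretely, $[[a,b^{(j)}],b^{(k)}]\in\gamma_3(G)$ and a direct computation (in the normalisation $\psi_1(b^{(j)})=(a,1,\ldots,1,a,b^{(j)})$, $\psi_1(b^{(k)})=(b^{(k)},1,a,\ldots,a,1)$ with $j=1$, $k=p$) shows its $\psi_1$-image is supported at a single first-level vertex with section $[(b^{(j)})^{-1}a,b^{(k)}]\equiv[a,b^{(k)}][b^{(k)},b^{(j)}]\pmod{\gamma_3(G)}$. So your crux claim already asserts $[a,b^{(k)}]\equiv[b^{(j)},b^{(k)}]\pmod{\gamma_3(G)}$ --- a nontrivial relation in $G'/\gamma_3(G)$ of exactly the kind the lemma is about, and nothing in your sketch establishes it (or rules out that the crux claim is simply false).

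The route you indicate for proving the crux also does not work as described: sandwiching $\psi_1(\gamma_3(G))$ between $\gamma_3(G)\times\overset{p}{\cdots}\times\gamma_3(G)$ and $\psi_1(G')$, with the latter described modulo $G'\times\overset{p}{\cdots}\times G'$, is useless here, because the sections you need to exclude, namely $[a,b^{(\ell)}]$, already lie in $G'$. One must control $\psi_1(\gamma_3(G))$ modulo $\gamma_3(G)\times\overset{p}{\cdots}\times\gamma_3(G)$, which in turn requires knowing $G'/\gamma_3(G)$ --- and this is further complicated by elements such as $[a,b^{(j)},a]\in\gamma_3(G)$ whose first-level sections are not even in $\textup{St}_G(1)$. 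This is precisely the difficulty the paper sidesteps: its proof works inside $G''$ rather than $\gamma_3(G)$, using that $\psi_1(G'')\ge\gamma_3(G)\times\overset{p}{\cdots}\times\gamma_3(G)$ to show that $[b^{(j)},b^{(k)}]\in\gamma_3(G)$ would force $[b^{(j)},b^{(k)}]\in G''$, and then computing an explicit generating set of $\psi_1(G'')$ modulo $\psi_1(\gamma_3(\textup{St}_G(1)))$ (the elements $T_{ij},U_{ij},V_k,W_k$) to exclude $([a,b^{(k)}],1,\ldots,1)$. Some such explicit computation is unavoidable, and it is the missing content of your proposal.
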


\begin{proof}
For ease of notation, suppose that $j=1$ and $k=p$, and we write $b=b^{(j)}$ and $c=b^{(k)}$. Further we suppose that $\psi_1(b)=(a,1,\ldots,1,a,b)$ and $\psi_1(c)=(c,1,a,\ldots,a,1)$. The general case follows similarly.

  We first claim that 
\[
\psi_1(G'')\ge \gamma_3(G) \langle x,y,z\rangle \times 1\times \overset{p-1}\cdots \times 1,
\]
where 
\[
x:=[b,a][c,a],\qquad y:=[b,c][a,b],\qquad z:=[c,b][a,c].
\]
Indeed, from Lemma~\ref{second_derived}, we have $\psi_1(G'')\ge \gamma_3(G) \times \overset{p}\cdots \times \gamma_3(G)$. Additionally, modulo $\gamma_3(G) \times \overset{p}\cdots \times \gamma_3(G)$, we have:
\begin{align*}
\psi_1([[a,b],[a,b]^a])
&\equiv (1,[a,b],1,\ldots,1,[b,a]) \\
\psi_1([[a,b],[a,b]^{a^2}])
&\equiv ([a,b],[b,a],1,\ldots,1) 
\end{align*}
By taking suitable products of cyclic permutations of the above two elements, we deduce that $\psi_1(G'')$ contains the elements
\[
(1,\overset{i}\ldots,1 ,[a,b],1,\overset{j}\ldots,1,[b,a], 1,\ldots, 1)
\]
for all $i\in \{0,\ldots,p-2\}$ and $j\in \{0,\ldots, p-i-2\}$.

Similarly, using
\begin{align*}
\psi_1([[a,c],[a,c]^a])
&\equiv ([a,c],1,[c,a],1,\ldots,1)\quad \text{mod }\gamma_3(G) \times \overset{p}\cdots \times \gamma_3(G),\\
\psi_1([[a,c],[a,c]^{a^2}])
&\equiv (1,[c,a],[a,c],1,\ldots,1) \quad \text{mod }\gamma_3(G) \times \overset{p}\cdots \times \gamma_3(G),
\end{align*}
and their cyclic permutations, we see that $\psi_1(G'')$ contains
\[
(1,\overset{i}\ldots,1 ,[a,c],1,\overset{j}\ldots,1,[c,a], 1,\ldots, 1)
\]
for all $i\in \{0,\ldots,p-2\}$ and $j\in \{0,\ldots, p-i-2\}$.

The above elements in combination with
\[
\psi_1([ [a,b],[a,c]^a]) \equiv ([b,a],[c,a],1,\ldots,1)\quad \text{mod }\gamma_3(G) \times \overset{p}\cdots \times \gamma_3(G)
\]
yield the element 
\[
([b,a][c,a],1,\ldots,1)=(x,1,\ldots,1)\in \psi_1(G'').
\]
Furthermore, the elements 
\[
(1,\overset{i}\ldots,1 ,[b,a],1,\overset{j}\ldots,1,[c,a], 1,\ldots, 1),
\]
for all $i\in \{0,\ldots,p-2\}$ and $j\in \{0,\ldots, p-i-2\}$, are also in $\psi_1(G'')$.

Finally, upon considering 
\[
\psi_1([[a,b],[a,c]])\equiv ([c,b][a,c],[a,c],1,\ldots,1,[a,b]) \quad \text{mod }\gamma_3(G) \times \overset{p}\cdots \times \gamma_3(G),
\]
it is straightforward to obtain $(y,1,\ldots,1)$ and $(z,1,\ldots,1)$. The claim is now proved.

Note that if $[b,  c]\in \gamma_3(G)$, we would have 
\[
\psi_1^{-1}(([a,c],1,\ldots,1))=[b,c]\in G''.
\]
So we are done if we show that $\psi_1([b,c])$ modulo $\psi_1(\gamma_3(\text{St}_G(1)))$ is not an element of $\psi_1(G'')$ modulo  $\psi_1(\gamma_3(\text{St}_G(1)))$.

Write $\mu=[a,b]$, $\nu=[a,c]$ and $\zeta=[b,c]$. From our discussion above, it follows that
\[
\frac{\psi_1(G'')}{\psi_1(\gamma_3(\text{St}_G(1)))}=\frac{\langle T_{ij}, U_{ij}, V_k,W_k \rangle \psi_1(\gamma_3(\text{St}_G(1)))}{\psi_1(\gamma_3(\text{St}_G(1)))},
\]
where
\begin{align*}
T_{ij}&=(1,\overset{i-1}\ldots,1,\mu,1,\overset{j-i-1}\ldots,1,\mu^{-1},1,\ldots,1),\\
U_{ij}&=(1,\overset{i-1}\ldots,1,\nu,1,\overset{j-i-1}\ldots,1,\nu^{-1},1,\ldots,1),\\
V_k&=(1,\overset{k-1}\ldots,1,\mu\zeta,1,\ldots,1),\\
W_k&=(1,\overset{k-1}\ldots,1,\nu\zeta^{-1},1,\ldots,1),
\end{align*}
with $1\le i\le p-1$, $i+1\le j\le p$, and $1\le k\le p$.
It is now clear that $(\mu,1,\ldots,1)\not \in\psi_1(G'')/\psi_1(\gamma_3(\text{St}_G(1)))$, and the proof is complete.
\end{proof}

\subsection{Super strongly fractal groups}

We include here a result that is of independent interest. It was established for multi-GGS groups in~\cite[Prop.~2.5.4]{thesis} (see also \cite{Jone}).
\begin{proposition}\label{pro:super}
 Let $G = \langle a, \mathbf{b}^{(1)},\ldots , \mathbf{b}^{(p)}\rangle\in
  \mathscr{C}$
  be in standard form. Then $G$ is super strongly fractal if and only if $G\not \in \mathscr{G}$.
\end{proposition}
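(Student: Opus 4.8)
The plan is to prove both implications separately, using the recursive structure of the directed generators together with the branching results already established.

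First the easy direction: if $G\in\mathscr{G}$, then $G$ is conjugate to a group in standard form whose every non-empty family $\mathbf{b}^{(j)}=\{b^{(j)}_1\}$ consists of a single directed automorphism with constant defining vector $(1,\ldots,1)$. I would show directly that such a $G$ is not super strongly fractal by exhibiting a level $n$ (already $n=2$ should suffice) and a vertex $\omega$ of length $n$ with $\varphi_\omega(\text{St}_G(n))\subsetneq G$. Concretely, for the constant-vector GGS generator one has $\psi_1(b^{(j)}_1)=(a,\ldots,a,b^{(j)}_1)$, and an analysis of which sections can occur at a length-$2$ vertex shows that $\varphi_\omega(\text{St}_G(2))$ lands inside a proper subgroup (it misses the generator $a$, since the only way to put a nontrivial section below level $2$ forces an $a$-power relation among the other coordinates that cannot be killed while fixing the whole second layer). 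This is essentially the same obstruction that makes $\mathscr{G}$-groups fail to be regular branch over $G'$ or $\gamma_3(G)$, and I expect to cite or reuse the computation already present in the proof of Lemma~\ref{notderivedsubgroup}.

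For the main direction, suppose $G\in\mathscr{C}\backslash\mathscr{G}$; I must show $\varphi_\omega(\text{St}_G(n))=G$ for every $n$ and every $\omega\in X^n$. Since $G$ is fractal and, by \cite[Lem.~2.5]{Jone}, strongly fractal, the case $n=1$ is known, and I would proceed by induction on $n$. By spherical transitivity it is enough to treat one vertex $\omega$ of length $n$, say $\omega=(p,\ldots,p)$ along a convenient ray, or more robustly to observe that $\varphi_\omega(\text{St}_G(n))$ does not depend on the choice of $\omega$. The key step is: $G$ being regular branch over $K\in\{G',\gamma_3(G)\}$ (Proposition~\ref{classification} and Lemma~\ref{notderivedsubgroup} cover every $G\notin\mathscr{G}$) gives $K\times\overset{p}{\cdots}\times K\subseteq\psi_1(K)\subseteq\psi_1(\text{St}_G(1))$, so $1\times\cdots\times K\times\cdots\times 1\subseteq\psi_1(\text{St}_G(1))$; applying $\varphi_\omega$ for $|\omega|=1$ together with the inductive hypothesis at level $n-1$ for the coordinate subtree then yields that $\varphi_\omega(\text{St}_G(n))$ contains $\varphi_{\omega'}(\text{St}_K(n-1))$ for the appropriate length-$(n-1)$ word $\omega'$ in that subtree. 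Since $K$ has finite index in $G$ and (being branch over $K$) the iterated branching propagates, one obtains that $\varphi_\omega(\text{St}_G(n))$ contains a finite-index subgroup; to upgrade this to equality I would combine strong fractality at the top level with the observation that $\psi_1(\text{St}_G(1))$ projects onto $G$ in each coordinate, so any section of $\text{St}_G(n)$ at a length-$n$ vertex can be adjusted, using elements of $\text{St}_G(1)$ with prescribed first-coordinate section $=$ a generator of $G$ and all deeper sections trivial (such elements exist because $K\times\cdots\times K\subseteq\psi_1(K)$ lets us correct the other $p-1$ coordinates down to the identity), to realise an arbitrary element of $G$.

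The main obstacle I anticipate is the bookkeeping in the inductive step: making sure that when we push a prescribed element of $G$ down to the vertex $\omega$ at level $n$, we can simultaneously arrange all the other $p^n-1$ sections to be trivial, i.e. genuinely stay inside $\text{St}_G(n)$ rather than merely $\text{St}_G(n-1)$. This is exactly where one needs more than strong fractality, and where the regular-branch hypothesis $K\times\overset{p}{\cdots}\times K\subseteq\psi_1(K)$ does the work: it lets us independently fix the branching in each of the $p$ subtrees at level $1$, and the induction hypothesis handles levels $2$ through $n$ inside each of those. I would isolate this as a short lemma of the form: if $G$ is self-similar, regular branch over $K\le\text{St}_G(1)$, and $\varphi_x(\text{St}_G(1))=G$ for all $x$, then $\varphi_\omega(\text{St}_G(n))=G$ for all $\omega$ — after which the proposition follows by checking that every $G\in\mathscr{C}\backslash\mathscr{G}$ meets the hypotheses (Lemma~\ref{notderivedsubgroup}, Lemma~\ref{assoc-multi-GGS}, Proposition~\ref{twosymmetric}) and every $G\in\mathscr{G}$ fails to be super strongly fractal by the explicit level-$2$ computation above.
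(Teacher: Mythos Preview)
Your forward direction has a genuine gap in the ``upgrade'' step. Your correction idea asks for elements of $\text{St}_G(1)$ with one coordinate equal to an arbitrary generator of $G$ and the remaining $p-1$ coordinates trivial; but $K\times\cdots\times K\subseteq\psi_1(K)$ only lets you place elements of $K$ in a chosen coordinate, so you cannot kill stray sections lying in $G\setminus K$. The ingredient you are missing is Lemma~\ref{lem:subdirect}: for $G\notin\mathscr{G}$ the branching subgroup $K\in\{G',\gamma_3(G)\}$ already satisfies $\varphi_x(K)=G$ for every first-level vertex~$x$. With this the argument is short and no correction is needed: set $K_n=\psi_n^{-1}(K\times\overset{p^n}{\cdots}\times K)$; since $K\le\text{St}_G(1)$ one has $K_n\le\text{St}_G(n+1)$; for $|u|=n$ one gets $\varphi_u(K_n)=K$, and hence for $v=ux$ with $|x|=1$ one obtains $\varphi_v(\text{St}_G(n+1))\supseteq\varphi_x(\varphi_u(K_n))=\varphi_x(K)=G$. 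Your proposed general lemma (self-similar $+$ regular branch over $K\le\text{St}_G(1)$ $+$ strongly fractal $\Rightarrow$ super strongly fractal) would need the additional hypothesis $\varphi_x(K)=G$ to go through as written.

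For the backward direction, your guess that $n=2$ suffices and that $a$ is the missing generator is correct, but the pointer to Lemma~\ref{notderivedsubgroup} is misplaced (that lemma treats $G\notin\mathscr{G}$). The paper's argument is computational and quite different from what you sketch: after reducing to $r\ge 2$ via \cite[Prop.~4.3]{Jone}, it uses the index formulae $|G:\text{St}_G(2)|=p^{p+1}$ and $|G:\text{St}_G(1)'|=p^{rp+1}$ (from \cite{FZ} and \cite{FAGT}) to describe $\text{St}_G(2)/\text{St}_G(1)'$ explicitly, then identifies $\varphi_{ux}(\text{St}_G(2))$ with $N=\langle a(b^{(i_j)})^{-1},\,[b^{(i_j)},b^{(i_k)}]\mid 1\le j,k\le r\rangle^G$ and invokes \cite[Prop.~3.9]{KT} to conclude $G/NG'=\langle\overline{a}\rangle\cong C_p$, whence $N\ne G$.
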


\begin{proof}
 Suppose first that $G\not\in \mathscr{G}$.
 Since $G$ is strongly fractal, we have $\varphi_x(\text{St}_G(1))=G$ for every vertex $x\in X$ at level $1$. We will show that $\varphi_u(\text{St}_G(n))=G$ for all vertices $u$ at level $n$, for $n\ge 2$.
 
 Let $K$ denote the usual branching subgroup of $G$, that is, $K=G'$ if $G\in \mathscr{R}$ and $K=\gamma_3(G)$ otherwise. Write $K_n =\psi_n^{-1}(K\times \overset{p^n}\cdots\times K)$. Then
 \[
 \psi_n(K_n)=K\times \overset{p^n}\cdots\times K\subseteq (\text{St}_G(1)\times \overset{p^n}\cdots \times \text{St}_G(1))\cap \psi_n(\text{St}_G(n))=\psi_n(\text{St}_G(n+1)).
 \]
 Therefore $K_n\subseteq \text{St}_G(n+1)$, and for each vertex $u$ at level $n$, we have $K=\varphi_u(K_n)\subseteq \varphi_u(\text{St}_G(n+1))$. Hence, for each $x\in X$, we obtain by Lemma~\ref{lem:subdirect} that
 \[
 G=\varphi_x(K)\subseteq \varphi_{ux}(\text{St}_G(n+1)),
 \]
 so that $\varphi_v(\text{St}_G(n+1))=G$ for every vertex $v$ at level $n+1$.
 
 Now suppose that $G\in \mathscr{G}$, that is, $G=\langle a,b^{(i_1)},\ldots, b^{(i_r)}\rangle$, for $r\in \mathbb{N}$ and $i_1,\ldots,i_r\in\{1,\ldots,p\}$ with constant defining vector. By \cite[Prop.~4.3]{Jone}, we may assume that $r\ge 2$. 
 
 As in \cite[Thm.~2.4(i)]{FZ}, we have $|G/\text{St}_G(2)|=p^{p+1}$, and from \cite{FAGT} (which is a direct generalisation of~\cite[Thm.~2.14]{FZ}), we obtain $|G/\text{St}_G(1)'|=p^{rp+1}$. Certainly $\text{St}_G(1)'\le \text{St}_G(2)$, and hence $|\text{St}_G(2)/\text{St}_G(1)'|=p^{p(r-1)}$.
 
 Without loss of generality, suppose that $i_1<i_2<\cdots < i_r$. For $c_1,\ldots,c_s\in \text{St}_G(1)$ with $s\in \mathbb{N}$, we write $\langle c_1,\ldots,c_s\rangle^{\langle a\rangle}$ to denote $\langle c_i^{a^{j}} \mid 1\le i\le s,\, 0\le j\le p-1\rangle$. 
 
 Observe that
 \begin{align*}
     \frac{\text{St}_G(2)}{\text{St}_G(1)'}&=\frac{\langle \, b^{(i_j)}(b^{(i_k)})^{-a^{i_k-i_j}}  \mid 1\le j,k\le r \rangle^G\, \text{St}_G(1)'}{\text{St}_G(1)'}\\
     &=\frac{\langle \, b^{(i_j)}(b^{(i_{j+1})})^{-a^{i_{j+1}-i_j}}  \mid 1\le j\le r-1 \rangle^G\, \text{St}_G(1)'}{\text{St}_G(1)'}\\
     &=\frac{\langle \, b^{(i_j)}(b^{(i_{j+1})})^{-a^{i_{j+1}-i_j}}  \mid 1\le j\le r-1 \rangle^{\langle a\rangle}\, \text{St}_G(1)'}{\text{St}_G(1)'}\\
     &\cong C_p\times \overset{p(r-1)}\cdots\times C_p.
 \end{align*}
 
 We deduce that for $u\in X$ a first level vertex,
 \[
 \varphi_u(\text{St}_G(2))= \langle b^{(i_j)}(b^{(i_{j+1})})^{-1}  \mid 1\le j\le r-1 \rangle \, G'.
 \]
 Therefore, for $x\in X$,
 \[
\varphi_{ux}(\text{St}_G(2)) = \langle a(b^{(i_j)})^{-1}, [b^{(i_j)},b^{(i_k)}] \mid 1\le j,k \le r \rangle^G.
 \]
 
 We write $N=\langle a(b^{(i_j)})^{-1}, [b^{(i_j)},b^{(i_k)}] \mid 1\le j,k \le r \rangle^G$. From \cite[Prop.~3.9]{KT}, one obtains
 \[
 G/NG' = \langle \overline{a} \rangle\cong C_p,
 \]
 where $\overline{a}$ is the image of $a$ in $G/NG'$. Therefore $N\ne G$, and hence $\varphi_{ux}(\text{St}_G(2))\ne G$, as required.
\end{proof}

Thus, for branch multi-EGS groups, strongly fractal is equivalent to super strongly fractal.


\subsection{Automorphisms of branch multi-EGS groups}

Here we prove Theorem~\ref{Aut G}. By~\cite[Thm.~7.5]{LN}, it suffices to show that all branch multi-EGS groups are saturated. Recall that a group $G\le \text{Aut}(T)$ is \emph{saturated} if for any $n\in \mathbb{N}$ there exists a subgroup $H_n\le \text{St}_G(n)$ that is characteristic in $G$ and $\varphi_v(H_n)$ acts spherically transitively on $T_v$ for all level $n$ vertices $v$.

\begin{proof}[Proof of Theorem~\ref{Aut G}]
If $G$ is regular branch over $G'$, then we set $H_0=G$ and $H_{n+1}=H_n'$. If $G$ is regular branch over $\gamma_3(G)$, then $H_0=G$ and $H_{n+1}=\gamma_3(H_n)$.  By Lemma~\ref{lem:subdirect}, the restriction of $G'$ (respectively $\gamma_3(G)$) on the first level vertices of the tree is the whole group $G$. Hence it follows by induction that the restrictions of $H_n$ on the $n$th level vertices is the whole group $G$ and thus acts spherically transitively on every subtree rooted at an $n$th level vertex.
\end{proof}


\subsection{Normal subgroups in  branch multi-EGS groups}


We first recall, from \cite{KT}, the length
functions on the groups $G \in \mathscr{C}$. Fix a group
$G= \langle a,\mathbf{b}^{(1)}, \ldots, \mathbf{b}^{(p)} \rangle \in
\mathscr{C}$ in standard form and consider the free product
\[
\Gamma= \langle \hat a\rangle * \langle
\widehat{\mathbf{b}}^{(1)} \rangle * \cdots * \langle
\widehat{\mathbf{b}}^{(p)} \rangle
\]
of elementary abelian $p$-groups $\langle \hat a \rangle \cong C_p$
and
$\langle \widehat{\mathbf{b}}^{(j)} \rangle = \langle \hat b_1^{(j)},
\ldots, b_{r_j}^{(j)} \rangle \cong C_p^{\, r_j}$
for $1 \leq j \leq p$.  Note that there is a unique epimorphism
$\pi \colon \Gamma \rightarrow G$ such that $\hat a \mapsto a$
and $\hat b_i^{(j)} \mapsto b_i^{(j)}$ for $1 \leq j \leq p$ and
$1 \leq i \leq r_j$, inducing an epimorphism from
$\Gamma/\Gamma' \cong C_p^{\, 1 + r_1 + \cdots + r_p}$
onto~$G/G'$.  The latter is an isomorphism; see 
\cite[Prop.~3.9]{KT}.
 
Each element
$\hat{g} \in \Gamma$ has a unique reduced form
\[
\hat{g} = \hat a^{\alpha_1}\, w_1 \, \hat a^{\alpha_2} \, w_2 \,
\cdots \, \hat a^{\alpha_l} \, w_l \, \hat a^{\alpha_{l+1}},
\]
where $l \in \mathbb{N} \cup \{0\}$,
$w_1,\ldots, w_l \in \langle \widehat{\mathbf{b}}^{(1)} \cup \cdots
\cup \widehat{\mathbf{b}}^{(p)} \rangle \backslash \{1\}$,
and $\alpha_1, \ldots, \alpha_{l+1} \in \mathbb{F}_p$ such
that $\alpha_i \ne 0$ for $i \in \{2,\ldots,l\}$.

Furthermore, note that for each $i\in \{1,\ldots, l\}$, the element $w_i$ can be
uniquely expressed as
\[
w_i =  \big( \widehat{\mathbf{b}}^{(k(i,1))} \big)^{\boldsymbol{\beta}(i,1)}
\cdots \big( \widehat{\mathbf{b}}^{(k(i,n_i))} \big)^{\boldsymbol{\beta}(i,n_i)},
\]
where $n_i \in \mathbb{N}$,
$k(i,1), \ldots, k(i,n_i) \in \{1,\ldots, p\}$, with
$k(i,m) \ne k(i,m +1)$ for $1 \le m \le n_i-1$, and the exponent vectors
\[
\boldsymbol{\beta}(i,m) = \big( \beta(i,m)_1, \ldots,
\beta(i,m)_{r_{k(i,m)}} \big) \in
(\mathbb{F}_p)^{r_{k(i,m)}} \backslash \{\mathbf{0}\}, 
\]
for $1 \le m \le n_i$, are such that
\[
\big( \widehat{\mathbf{b}}^{(k(i,m))} \big)^{\boldsymbol{\beta}(i,m)} = \big(
\hat b^{(k(i,m))}_1 \big)^{\beta(i,m)_1} \cdots \big(
\hat b^{(k(i,m))}_{r_{k(i,m)}} \big)^{\beta(i,m)_{r_{k(i,m)}}}.
\]

The \emph{length} of $\hat{g}$ is defined as
$
\partial(\hat{g}) =  n_1 + \cdots + n_l$.

Let $G \in \mathscr{C}$ and $\pi \colon \Gamma \rightarrow G$ be
the natural epimorphism as above. The \emph{length} of $g\in G$ is then
$
\partial(g) = \min \{\partial(\hat{g}) \mid \hat{g}\in \pi^{-1}(g) \}$.

\bigskip

The following result is important for Section 5. Recall the notation $H_u=\varphi_u(H)$ for $H\le \text{St}_{\text{Aut}(T)}(u)$.

\begin{proposition}\label{Proposition 2.1}
Let $G = \langle a, \mathbf{b}^{(1)},\ldots , \mathbf{b}^{(p)}\rangle\in
  \mathscr{C}\backslash \mathscr{G}$
  be in standard form. Then for any non-trivial $x\in G$  there is a vertex $u$ such that $\textup{St}_N(u)_u=G$   where $N$ is the normal closure of $x$ in $G$.
\end{proposition}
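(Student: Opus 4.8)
The plan is to locate a single vertex $v$ below which $x$ first acts non-trivially on a layer, to commutate $x$ against a rigid automorphism supported at a child of $v$ so that the resulting element of $N$ has a prescribed section there, and finally to push that section up one more level and recover all of~$G$ from the regular branch structure. In particular the length machinery recalled above is not needed for this argument.

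First I would set $K = G'$ if $G \in \mathscr{R}$ and $K = \gamma_3(G)$ otherwise. By Lemma~\ref{notderivedsubgroup} together with Proposition~\ref{classification}, $G$ is then regular branch over $K$, so $K \le \mathrm{St}_G(1)$, $K \times \overset{p}{\cdots} \times K \subseteq \psi_1(K)$, and, by iterating this inclusion down the tree, $\varphi_v(\mathrm{Rist}_G(v)) \supseteq K$ for every vertex~$v$. Moreover, by Lemma~\ref{lem:subdirect}, $\psi_1(K)$ is a subdirect product of $G \times \overset{p}{\cdots} \times G$, so that $\varphi_x(K) = G$ for each first-level vertex~$x$. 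Next, since $x \ne 1$, I would let $n$ be minimal with $x \notin \mathrm{St}_G(n+1)$; then $x$ fixes every vertex of level~$n$, while for a suitable level-$n$ vertex $v$ the section $x_v$ displaces some child of~$v$. As $G$ sits inside the Sylow pro-$p$ subgroup $S$ of $\mathrm{Aut}(T)$, the automorphism $x_v$ acts on the first layer of $T_v$ as a non-trivial power $a^t$ of $a$, so $x$ carries the child $v1$ of $v$ to $v(1+t) \ne v1$.

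Now comes the main step. Given any $\kappa \in K$, pick $h \in \mathrm{Rist}_G(v1)$ with $\varphi_{v1}(h) = \kappa$; this is possible because $\varphi_{v1}(\mathrm{Rist}_G(v1)) \supseteq K$. Since $h$ is supported on $T_{v1}$, its conjugate $h^x$ is supported on $T_{v(1+t)}$, which is disjoint from $T_{v1}$; hence $[x,h] = (h^x)^{-1} h$ lies in $N$, is supported on $T_{v1} \cup T_{v(1+t)}$, fixes $v1$, and has section $\varphi_{v1}([x,h]) = \varphi_{v1}(h) = \kappa$ at $v1$. Letting $\kappa$ range over $K$ shows $\mathrm{St}_N(v1)_{v1} \supseteq K$. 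Finally, since every element of $K$ fixes the children of $v1$, any $g \in \mathrm{St}_N(v1)$ with $g_{v1} = \kappa$ also fixes $v11$ and satisfies $g_{v11} = \varphi_1(\kappa)$; as $\kappa$ ranges over $K$ and $\varphi_1(K) = G$, we obtain $\mathrm{St}_N(v11)_{v11} = G$, so $u = v11$ is the desired vertex.

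I expect the only delicate points to be the support bookkeeping in the main step — confirming that $h^x$ and $h$ have disjoint supports, so that $[x,h]$ genuinely restricts to $\kappa$ on $T_{v1}$ — and the legitimacy of the choice of $v$, namely that $x$ really displaces a first-level vertex of $T_v$, which is what forces $v1 \ne v(1+t)$ and hence the disjointness. Everything else is a direct appeal to the regular branch property of $G$ over $K$ and to the subdirectness in Lemma~\ref{lem:subdirect}, and the hypothesis $G \notin \mathscr{G}$ enters precisely through those two facts. An alternative, more computational route would induct on $\partial(x)$, reducing at each stage to shorter elements occurring as sections, but the argument above bypasses that.
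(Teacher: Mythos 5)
Your argument is correct, and it is genuinely different from the one in the paper. The paper proves this proposition by induction on the length $\partial(x)$, with an explicit case analysis: for $\partial(x)=0$ it computes the sections of $[a^i,b^{(j)}_l]$ for a generator with non-constant defining vector, for $\partial(x)=1$ it does a similar computation after conjugating $x$ into a convenient form, and for $\partial(x)>1$ it contracts via sections using \cite[Lem.~3.10]{KT}, with a separate pigeonhole argument for the case $p=3$, $\partial(x)=2$; the single-directed-generator GGS case is delegated to Pervova. You instead run the standard branch-group argument: locate the first level $n$ at which $x$ moves a vertex, so that some section $x_v$ at level $n$ acts as a non-trivial power of $a$ on the children of $v$; commutate $x$ against elements of $\mathrm{Rist}_G(v1)$, whose conjugates under $x$ are supported on the disjoint subtree $T_{v(1+t)}$, to show $\mathrm{St}_N(v1)_{v1}\supseteq K$ where $K$ is the branching subgroup ($G'$ or $\gamma_3(G)$); and then use the subdirectness of $\psi_1(K)$ from Lemma~\ref{lem:subdirect} to upgrade $K$ to all of $G$ at the vertex $v11$. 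All the ingredients you invoke (Lemma~\ref{notderivedsubgroup}, Proposition~\ref{classification}, Lemma~\ref{lem:subdirect}) are established before this proposition, so there is no circularity, and the hypothesis $G\notin\mathscr{G}$ enters exactly where you say it does. Your route is shorter, avoids the length function and the $p=3$ special case entirely, treats the GGS case uniformly rather than by citation, and the support bookkeeping you flag as delicate does check out: $h^x$ is supported on $T_{(v1)^x}=T_{v(1+t)}$, disjoint from $T_{v1}$, so $\varphi_{v1}([x,h])=\varphi_{v1}(h)=\kappa$. What the paper's computational proof buys in exchange is independence from the branch classification results of Section 3.1 and slightly more explicit control over which generators appear as sections, but for the statement as used later (in Theorem~\ref{Theorem 3.2}) only the bare conclusion is needed, so your proof suffices.
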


\begin{proof}
It suffices to find a vertex $v$ such that $a\in \text{St}_N(v)_v$ and $a^*b^{(j)}_l\in\text{St}_N(v)_v$ for all $1\le j\le p$ and $1\le l\le r_j$, where $*$ represents unknown exponents. We note that the result is true if $r_j\ne 0$ for only one $j\in \{1,\ldots,p\}$ and this $r_j=1$, by~\cite{Pervova}, so we will inherently exclude this case.

We proceed by induction on the length $\partial(x)$ of $x$ in $G$, and we will make use of the fact that $G$ is fractal without special mention.

\bigskip 

\underline{Case 1:} Suppose $\partial(x) =0$. 

Note that the non-trivial elements of length 0 are of the form $a^i$ for $i\in \{1,\ldots,p-1\}$.   We choose $1\le j\le p$ 
and $1\le l \le r_j$ such that $b_l^{(j)}$ is not defined by the constant vector. Recall that
\[
\psi_1(b_l^{(j)})= (a^{e^{(j)}_{l,j}  },\ldots,  a^{e^{(j)}_{ l,p-1} }, {b^{(j)}_l}, a^{e^{(j)}_{l,1} },\ldots, a^{e^{(j)}_{l,j-1}}    )
\]
where $b^{(j)}_l$ appears in the $(p-j+1)$st coordinate, and
\[
\psi_1(a^{-i}(b^{(j)}_l)^{-1}a^i)=(a^{-e^{(j)}_{l,j-i}},\ldots,a^{-e^{(j)}_{l,p-1}}, (b^{(j)}_l)^{-1},a^{-e^{(j)}_{l,1}},\ldots, a^{-e^{(j)}_{l,j-i-1} }   )
\]
has $(b^{(j)}_l)^{-1}$ in the $(p-j+i+1)$st coordinate. Thus
\[
\psi_1([a^i,b^{(j)}_l])=(a^{f_1},\ldots, a^{f_{p-j}}, 
{a^{f_{p-j+1}}b^{(j)}_l}, a^{f_{p-j+2}},\ldots,\qquad\qquad \qquad\qquad
\]
\[
\qquad\qquad \qquad \qquad\qquad \qquad a^{f_{p-j+i}},
(b^{(j)}_l)^{-1}a^{f_{p-j+i+1}},a^{f_{p-j+i+2}},\ldots, a^{f_{p-1}}),
\]
where
\begin{align*}
 f_1= e^{(j)}_{l,j}-e^{(j)}_{l,j-i},\,&\ldots\,, f_{p-j}=e^{(j)}_{l,p-1}-e^{(j)}_{l,p-i-1},\\
 f_{p-j+1}&=-e^{(j)}_{l,p-i},\\
 f_{p-j+2}= e^{(j)}_{l,1}-e^{(j)}_{l,p-i+1},\, &\ldots\,,\,f_{p-j+i}=e^{(j)}_{l,i-1}-e^{(j)}_{l,p-1},\\
 f_{p-j+i+1}&=e^{(j)}_{l,i},\\
 f_{p-j+i+2}=e^{(j)}_{l,i+1}-e^{(j)}_{l,1},\, &\ldots\,,\, f_{p-1}=e^{(j)}_{l,j-1}-e^{(j)}_{l,j-i-1}.
\end{align*}

If one of the above $f_k$ for $k\in \{1,\ldots,p\}
\backslash \{ p-j+1,p-j+i+1\}$ is non-zero, then an appropriate conjugation shows that $\text{St}_N(v)_v$ contains a 
non-trivial power of $a$, for $v=u_p$. In order to get 
$a^*b^{(j)}_l\in \text{St}_N({v})_{{v}}$, we consider  $[a^i,b^{(j)}_l]^{a^{j-1}}$.

If all of the above mentioned $f_k$'s are zero,  we have that $b^{(j)}_l$ is defined by the 
constant vector, which contradicts the choice of $b^{(j)}_l$. Hence for $v=u_p$, we have 
$\langle a,b^{(j)}_l\rangle \le \text{St}_N(v)_v$. By considering $[a^i,b^{(k)}_m]^{a^{k-1}}$ for $1\le k\le p$ and 
$1\le m \le r_k$, we obtain $G=\text{St}_N(v)_v$, as required.

\bigskip

\underline{Case 2:} Suppose $\partial(x)=1$.

Then we have  $x=a^{\lambda}b^{(k)}a^{\mu}$
for $\lambda,\mu\in \mathbb{F}_p$ and $b^{(k)}\in \langle \mathbf{b}^{(k)}\rangle$ for $k\in \{1,\ldots, p\}$ with $r_k\ne 0$. Conjugating by $a^{\lambda}$, we may assume
$x=b^{(k)}a^{\mu}$.

Suppose $\mu=0$. We form $\widetilde{x}$ by conjugating $x$ by an appropriate power of $a$ so that 
$\varphi_p(\widetilde{x})=a^i$
for some $i\in \{1,\ldots,p-1\}$. Then we consider $[\widetilde{x},(b^{(j)}_l)^{a^{j-1}}]$, with $j$ and $l$ as before, which gives
\[
\varphi_p([\widetilde{x},(b^{(j)}_l)^{a^{j-1}}])=[a^i,b^{(j)}_l]
\] 
and we proceed as in Case 1; remembering that as $G$ is fractal, for every $s\in \mathbb{F}_p$ there is $g_1\in \mathrm{St}_G(1)$ such that $\psi_1(g_1)=(*,\ldots,*,a^s)$.

Next suppose  $\mu\ne 0$. Now consider
\begin{align*}
\psi_1(((b^{(j)}_l)^{-a^{j-k}})^x)&=\psi_1(a^{-\mu}(b^{(k)})^{-1}(b^{(j)}_l)^{-a^{j-k}}b^{(k)}a^{\mu})\\
&=(a^{-e^{(j)}_{l,k-\mu} },\ldots,a^{-e^{(j)}_{l,p-1} },(b^{(k)})^{-1}(b^{(j)}_l)^{-1}b^{(k)},a^{-e^{(j)}_{l,1} },\ldots,a^{-e^{(j)}_{l,k-\mu-1} }),
\end{align*}
where $(b^{(k)})^{-1}(b^{(j)}_l)^{-1}b^{(k)}$ is in the $(p-k+\mu+1)$st coordinate.
Then
\[
\psi_1([x,(b^{(j)}_l)^{a^{j-k}}]) =(a^{\tilde{f}_1},\ldots, a^{\tilde{f}_{p-k}},a^{\tilde{f}_{p-k+1}} b^{(j)}_l ,
a^{\tilde{f}_{p-k+2}},\ldots, \qquad\qquad \qquad\qquad
\]
\[
\qquad\qquad \qquad \qquad\qquad \qquad 
a^{\tilde{f}_{p-k+\mu}},
(b^{(k)})^{-1}(b^{(j)}_l)^{-1}b^{(k)}a^{\tilde{f}_{p-k+\mu+1}},a^{\tilde{f}_{p-k+\mu+2}},\ldots, a^{\tilde{f}_{p-1}}),
\]
where
\begin{align*}
 \tilde{f}_1= e^{(j)}_{l,k}-e^{(j)}_{l,k-\mu},\,&\ldots\,, \tilde{f}_{p-k}=e^{(j)}_{l,p-1}-e^{(j)}_{l,p-\mu-1},\\
 \tilde{f}_{p-k+1}&=-e^{(j)}_{l,p-\mu},\\
 \tilde{f}_{p-k+2}= e^{(j)}_{l,1}-e^{(j)}_{l,p-\mu+1},\, &\ldots\,,\,\tilde{f}_{p-k+\mu}=e^{(j)}_{l,\mu-1}-e^{(j)}_{l,p-1},\\
 \tilde{f}_{p-k+\mu+1}&=e^{(j)}_{l,\mu},\\
 \tilde{f}_{p-k+\mu+2}=e^{(j)}_{l,\mu+1}-e^{(j)}_{l,1},\, &\ldots\,,\, \tilde{f}_{p-1}=e^{(j)}_{l,k-1}-e^{(j)}_{l,k-\mu-1}.
\end{align*}
We proceed as in Case 1.

\bigskip

\underline{Case 3:} Suppose $\partial(x)=m>1$.

If $x\in \text{St}_G(1)$, then by~\cite[Lem.~3.10]{KT} there exists $j\in \{1,\ldots,p\}$ such that $\partial(\varphi_j(x))<m$, and we proceed by induction.

Now if $x\not\in \text{St}_G(1)$, then $x=ya^i$ for some $y\in \text{St}_G(1)$ and 
some $i\in \{1,\ldots, p-1\}$. 
We consider $\tilde{x}=[x,b^{(k)}_l]$, for some $k\in \{1,\ldots,p\}$ and $1\le l\le r_k$. The element $\tilde{x}$ has length at most $2m+2$. Then there is a $j\in \{1,\ldots,p\}$ such that $\partial(\varphi_j(\tilde{x}))<m$: indeed, this is clear by~\cite[Lem.~3.10]{KT}  apart from the case $p=3$ and $m=2$. So suppose $p=3$ and that $x$ is of length $2$. Then 
\[
x=a^{i_1}b^{(j_1)}a^{i_2}b^{(j_2)}a^{i_3}
\]
for $i_1,i_2,i_3\in \mathbb{F}_p$ with $i_1+i_2+i_3\ne 0$ and $b^{(j_k)}\in \langle \mathbf{b}^{(j_k)}\rangle$ for $j_k\in \{1,2,3\}$ with $r_{j_k}\ne 0$, where $k\in \{1,2\}$. Equivalently,
\[
x=a^{i_1+i_2+i_3} (b^{(j_1)})^{a^{i_2+i_3}}(b^{(j_2)})^{a^{i_3}}
\]
with $i_1+i_2+i_3\ne 0$. Now consider some $b^{(l)}\in \langle \mathbf{b}^{(l)}\rangle$ for $l\in \{1,2,3\}$ with $r_l\ne 0$. To simplify notation, we write $b=(b^{(j_1)})^{a^{i_2+i_3}}$, $c=(b^{(j_2)})^{a^{i_3}}$, $d=b^{(l)}$ and $i=i_1+i_2+i_3$. Then
\[
[x,d] = c^{-1} b^{-1} d^{- a^{i}} b c d.
\]
Under $\psi_1$, each of the directed automorphisms $b,c,d$ will project an element of length $1$ in exactly one coordinate. Since there are only three coordinates, and $[x,d]$ consists of the four directed automorphisms $b$, $c$, $d$ and $d^{a^i}$, it follows by the pigeonhole principle that there is a coordinate $j\in \{1,2,3\}$ such that $\partial(\varphi_j([x,d]))$ is at most $\lfloor 4/3\rfloor=1$.
\end{proof}


\subsection{Torsion groups}

It is well known that GGS-groups are torsion if and only if the components of the defining vector sum to zero modulo $p$. As the directed generators of a multi-GGS group commute, it is clear that a multi-GGS group is torsion if and only if for every defining vector, the components sum to zero modulo $p$. The same is true for the groups in~$\mathscr{C}$, however it was pointed out to us by G.\,A. Fern\'{a}ndez-Alcober that no explicit proof of this fact exists in the literature. We provide the necessary details here.

\begin{lemma}
Let $G=\langle a, \mathbf{b}^{(1)}, \ldots, \mathbf{b}^{(p)} \rangle
  \in \mathscr{C}$ be in standard form, associated to the numerical datum $\mathbf{E} = (\mathbf{E}^{(1)}, \ldots, \mathbf{E}^{(p)})$. Then $G$ is torsion if and only if 
  \[
  \sum_{\ell =1}^p  e^{(j)}_{i,\ell} \equiv_p 0
  \]
  for all $j\in \{1,\ldots,p\}$ and $1\le i\le r_j$.
\end{lemma}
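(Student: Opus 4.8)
The plan is to reduce the statement about arbitrary multi-EGS groups to the already-understood case of GGS-groups, using the recursive/self-similar structure and the fact that the directed generators within each family commute. First I would treat the ``only if'' direction. Suppose $\sum_{\ell=1}^{p} e^{(j)}_{i,\ell}\not\equiv_p 0$ for some $j$ and some $i\le r_j$. Then the single directed generator $b^{(j)}_i$ together with $a$ generates a GGS-group $\langle a, b^{(j)}_i\rangle$ inside $G$ whose defining vector has nonzero component sum, hence by the classical result this GGS-group is not torsion; as it is a subgroup of $G$, the group $G$ is not torsion either. (One should note here that $b^{(j)}_i$ together with $a$ genuinely generates a GGS-group in standard form: this is immediate from the recursion $\psi_1(b^{(j)}_i)=(a^{e^{(j)}_{i,j}},\ldots,a^{e^{(j)}_{i,j-1}})$ with $b^{(j)}_i$ in one coordinate.)

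For the ``if'' direction, assume $\sum_{\ell=1}^{p} e^{(j)}_{i,\ell}\equiv_p 0$ for all relevant $j,i$; I would show every $g\in G$ has finite order by induction on the length $\partial(g)$ defined in the excerpt, following the standard argument for GGS- and multi-GGS groups. If $\partial(g)=0$ then $g$ is a power of $a$, hence has order dividing $p$. For the inductive step, after conjugating we may assume $g=ya^i$ with $y\in\mathrm{St}_G(1)$ or $g\in\mathrm{St}_G(1)$; if $i\ne 0$ one passes to $g^p\in\mathrm{St}_G(1)$, and a Fermat-quotient computation shows that the length $\partial$ of each of the $p$ sections $\varphi_x(g^p)$ is bounded by $\partial(g)$, with the crucial point being that when one forms $\psi_1(g^p)$ the ``$a$-exponents'' contributed along each coordinate sum, over the $p$ coordinates, to a multiple of $\sum_\ell e^{(j)}_{i,\ell}\equiv 0$, so no new $a$'s accumulate and in fact the length drops strictly in at least one coordinate. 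For $g\in\mathrm{St}_G(1)$ of positive length, \cite[Lem.~3.10]{KT} (quoted in the excerpt) already gives a coordinate $j$ with $\partial(\varphi_j(g))<\partial(g)$. In all cases, since $G$ is self-similar, each section lies in $G$, and by induction each section is torsion; then $g^p$ (or $g$) lies in $\mathrm{St}_G(1)$ with all sections torsion, whence $g$ itself is torsion. Feeding this into the tree structure (a bound on the orders of the sections yields a bound on the order of $g$, via the embedding $\psi_1$) completes the induction.

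The main obstacle I expect is the careful bookkeeping in the inductive step: one must verify that passing from $g$ to the sections of $g$ (or $g^p$) does not increase the length $\partial$, and the key mechanism for this — the vanishing of the sum $\sum_{\ell} e^{(j)}_{i,\ell}$ preventing accumulation of $a$'s — has to be made precise simultaneously for all the families $\mathbf b^{(1)},\ldots,\mathbf b^{(p)}$ appearing in $g$, not just one. This is essentially the content of the length-reduction lemmas of \cite{FZ, AlejJone} extended to the multi-family setting; the commutativity of each $\mathbf b^{(j)}$ ensures that the reduced-form combinatorics behaves exactly as in the multi-GGS case, so the argument there transfers with only notational changes. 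A secondary, minor point is the base-case bookkeeping for small $p$ (e.g. $p=3$), handled by the pigeonhole argument already used in the proof of Proposition~\ref{Proposition 2.1}.
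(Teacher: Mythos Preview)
Your overall strategy matches the paper's: the ``only if'' direction via the embedded GGS-subgroup $\langle a, b^{(j)}_i\rangle$, and the ``if'' direction by induction on $\partial(g)$, splitting into the cases $g\in\mathrm{St}_G(1)$ and $g\notin\mathrm{St}_G(1)$. The forward direction is fine.

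There is, however, a genuine gap in your inductive step. For $g\notin\mathrm{St}_G(1)$ you assert that among the sections of $g^p$ ``the length drops strictly in at least one coordinate''. This is false: already for $g=ab$ with $b$ a single directed generator, each section $g_l$ of $g^p$ is a cyclic product of the sections of $b^{a^{p-1}}\cdots b^a b$ and has length exactly $\partial(g)$. Even were your claim true, a strict drop in \emph{one} coordinate would not suffice: to conclude that $g^p$ is torsion via $\psi_1$ you need \emph{every} section to be torsion, hence (for the induction) every section to have smaller length. The same defect appears in your treatment of $g\in\mathrm{St}_G(1)$: citing \cite[Lem.~3.10]{KT} for one coordinate with $\partial(\varphi_j(g))<\partial(g)$ and then asserting ``by induction each section is torsion'' ignores the possibility, explicitly allowed by \cite[Lem.~3.10]{KT}, of a single residual section with $\partial(\varphi_j(g))=\partial(g)$.

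The fix, which is the paper's argument and whose key ingredient you have correctly identified but misapplied, is this: the vanishing of $\sum_\ell e^{(j)}_{i,\ell}$ does not force a length drop; it forces each section $g_l$ of $g^p$ to lie in $\mathrm{St}_G(1)$ (the $a$-exponent of $g_l$ modulo $G'$ is exactly this sum). One then feeds these $g_l$ back into the $\mathrm{St}_G(1)$ case. There, when one section retains length $\partial(g)$, one observes that this section is itself in $\mathrm{St}_G(1)$ and descends one further level, where \cite[Lem.~3.10]{KT} now yields a strict drop in every second-level coordinate. Your remark about a $p=3$ pigeonhole argument is not needed here; that device was specific to the different computation in Proposition~\ref{Proposition 2.1}.
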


\begin{proof}
The forward direction is obvious, based on the known result for GGS-groups. So we suppose that, for every defining vector, the components sum to zero modulo $p$. Let $g\in G$ be arbitrary. We will show that $g$ has finite order, by induction on the length $\partial(g)$. 

If $\partial(g)=0$, then $g=a^k$ for some $k\in\mathbb{F}_p$, and the result is clear. If $\partial(g)=1$, then $g\in G_j= \langle a,\mathbf{b}^{(j)} \rangle$ for some $j\in\{1,\ldots,p\}$, and the result is clear. Hence we suppose that $\partial(g)=n\ge 2$ and that the result holds for all elements $g'\in G$ with $\partial(g')<n$.

\underline{Case 1:} Suppose $g\in \text{St}_G(1)$. If, for all $j\in\{1,\ldots,p\}$ we have $\partial(\varphi_j(g))<\partial(g)$, then the result follows by induction. Therefore, we suppose otherwise, and hence, by~\cite[Lem.~3.10]{KT}, there is exactly one $j\in\{1,\ldots,p\}$ such that $\partial(\varphi_j(g))=\partial(g)$ and $\varphi_i(g)$ is a power of $a$ for every $i\ne j$. It follows that $\varphi_j(g)\in\text{St}_G(1)$. By~\cite[Lem.~3.10]{KT}, for all $k\in\{1,\ldots,p\}$ we have $\partial(\varphi_{jk}(g))<\partial(g)$, and we may proceed by induction.

\underline{Case 2:} Suppose $g\not\in \text{St}_G(1)$. Then we can write $g=a^kh$ for some $0\ne k\in\mathbb{F}_p$ and $h=\psi_1^{-1}((h_1,\ldots,h_p))\in \text{St}_G(1)$. Consider $g^p=\psi_1^{-1}((g_1,\ldots,g_p))$, where $g_l\equiv h_1\cdots h_p$ modulo $G'$ and $\partial(g_l)=\partial(g)$, for all $l\in\{1,\ldots,p\}$. Since 
\[
\text{St}_G(1)=\langle (b^{(j)}_i)^{a^k} \mid j\in\{1,\ldots,p\}, i\in\{1,\ldots,r_j\}, k\in\mathbb{F}_p \rangle
\]
the exponent sum of $a$ in the product $h_1\cdots h_p$ is zero modulo $p$, by our assumption. Hence $g_1,\ldots,g_p\in\text{St}_G(1)$, and we proceed as in Case 1.
\end{proof}


\section{Congruence subgroup property}

\subsection{Regular branch over $G'$}

We first prove one direction of Theorem~\ref{MainTheorem}(A.1):
\begin{lemma}\label{lem:no-CSP}
Let
  $G= \langle a,\mathbf{b}^{(1)}, \ldots, \mathbf{b}^{(p)} \rangle \in
  \mathscr{C}$
  be in standard form and such that $G$ is regular branch over $G'$. If  the defining vectors
  $\mathbf{E}^{(1)}, \ldots, \mathbf{E}^{(p)}$ are linearly dependent, then $G$ does not have the congruence subgroup property.
\end{lemma}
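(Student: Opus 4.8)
The strategy is the standard one for showing failure of the congruence subgroup property: produce a finite-index normal subgroup $N \trianglelefteq G$ that contains no level stabiliser $\mathrm{St}_G(n)$. A convenient candidate is a subgroup sitting strictly between $\mathrm{St}_G(1)'$ and $\mathrm{St}_G(1)$, exploiting the linear dependence of the defining vectors. Concretely, I would argue as follows. Since $G$ is regular branch over $G'$, by Lemma~\ref{assoc-multi-GGS} and Proposition~\ref{twosymmetric} we have $\psi_1(\mathrm{St}_G(1)') = G' \times \overset{p}{\cdots} \times G'$. The linear dependence of $\mathbf{E}^{(1)}, \ldots, \mathbf{E}^{(p)}$ means there is a nontrivial $\mathbb{F}_p$-linear relation among the $r$ defining vectors; translating this through the recursion $\psi_1(b^{(j)}_i) = (a^{e^{(j)}_{i,j}}, \ldots, b^{(j)}_i, \ldots, a^{e^{(j)}_{i,j-1}})$, there is a nontrivial product $w$ of the generators $b^{(j)}_i$ (and their $a$-conjugates at level one), lying in $\mathrm{St}_G(1)$ but not in $\mathrm{St}_G(1)'$, whose image $\psi_1(w)$ has all of its $p$ coordinates equal, modulo $G' \times \overset{p}{\cdots} \times G'$, to a single element of $G'$ — in fact one can arrange $\psi_1(w) \in G' \times \overset{p}{\cdots} \times G'$, i.e. $w \in \psi_1^{-1}(G' \times \cdots \times G') = \mathrm{St}_G(1)'$, or else $\psi_1(w)$ equals $(g, \ldots, g)$ modulo the diagonal for some $g \notin G'$. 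Using this, set
\[
N = \big\langle w \big\rangle^G \cdot \mathrm{St}_G(1)' \cdot \gamma_3(G) \quad \text{(or a suitable such normal closure)},
\]
which has finite index in $G$ since $\mathrm{St}_G(1)'$ already does and the quotient $\mathrm{St}_G(1)/\mathrm{St}_G(1)'$ is finite.

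The heart of the argument is then to show $\mathrm{St}_G(n) \not\subseteq N$ for every $n$. Here I would use the self-replicating structure: because $G$ is regular branch over $G'$ and $\psi_1(\mathrm{St}_G(1)') = G' \times \cdots \times G'$, a level stabiliser $\mathrm{St}_G(n)$ projects, under $\psi_{n-1}$, onto a subgroup of $\mathrm{St}_G(1) \times \cdots \times \mathrm{St}_G(1)$ that surjects in each coordinate (by super strong fractality, Proposition~\ref{pro:super}, available since $G \notin \mathscr{G}$ when the vectors are dependent but $G$ is still branch — one needs $G \notin \mathscr{G}$, which holds here). If $\mathrm{St}_G(n) \subseteq N$ held for some $n$, then passing to sections would force the ``defect'' element $w$ (or rather the obstruction it represents in $\mathrm{St}_G(1)/N\cap \mathrm{St}_G(1)$) to be trivial, contradicting $w \notin \mathrm{St}_G(1)'$. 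Equivalently, and perhaps cleaner: compute the index $|G : \mathrm{St}_G(1)' \gamma_3(G)|$ versus $|G : N|$ and track how $|G : \mathrm{St}_G(n)|$ grows; a counting argument in the spirit of \cite{FZ, FAGUA} shows the congruence quotients grow too slowly to ever land inside $N$.

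The main obstacle is making the passage ``from a linear dependence among the $\mathbf{E}^{(j)}$ to an explicit element $w \in \mathrm{St}_G(1) \setminus \mathrm{St}_G(1)'$ witnessing non-rigidity'' precise and uniform across all three branch subfamilies (Lemma~\ref{assoc-multi-GGS}, Proposition~\ref{twosymmetric}, and their interplay), since the shape of $w$ depends on which $r_j$ are nonzero and whether vectors are symmetric; one must also handle the bookkeeping of $\gamma_3(G)$ versus $G'$ carefully, as $\mathrm{St}_G(1)'/\big(\mathrm{St}_G(1)' \cap \gamma_3(G)\big)$ can be nontrivial. I expect the cleanest route is to reduce, via the fractality and the relations $\psi_1([(b^{(j)}_l)^{a^j},(b^{(k)}_m)^{a^k}]) = ([b^{(j)}_l,b^{(k)}_m],1,\ldots,1)$ already used in Lemma~\ref{assoc-multi-GGS}, to a statement purely about the abelian quotient $\mathrm{St}_G(1)/\mathrm{St}_G(1)'$ and its image in $(G/G')^p$, where linear dependence of the $\mathbf{E}^{(j)}$ directly produces a nonzero kernel element — this kernel element is exactly the obstruction to the congruence subgroup property, mirroring \cite[Thm.~1.4(3)]{KT}.
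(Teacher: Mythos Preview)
Your proposal has a genuine gap, and the overall route is more tangled than necessary.

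First, the construction of $w$ is self-contradictory as written. You want $w\in\mathrm{St}_G(1)\setminus\mathrm{St}_G(1)'$ with $\psi_1(w)\in G'\times\cdots\times G'$; but since $\psi_1(\mathrm{St}_G(1)')=G'\times\cdots\times G'$ and $\psi_1$ is injective on $\mathrm{St}_G(1)$, these two conditions force $w\in\mathrm{St}_G(1)'$. What the linear dependence actually gives you is an element $w\in\mathrm{St}_G(2)$ with $\psi_1(w)=(1,\ldots,1,g,1,\ldots,1)$ where $g\notin G'$ --- a very different shape. Second, your candidate $N=\langle w\rangle^G\cdot\mathrm{St}_G(1)'\cdot\gamma_3(G)$ is never pinned down, and the sketch ``passing to sections would force the defect to be trivial'' is not an argument: you would need to control exactly how $N$ behaves under $\psi_n$, which you have not set up. The index-counting alternative you mention is also not developed.

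The paper's proof is both simpler and sharper: it shows directly that $G'$ itself is not a congruence subgroup. From the linear dependence one extracts a generator $c\in\mathbf{b}^{(j)}$ and a product $b_{i_1}^{a^{i_1-j}}\cdots b_{i_m}^{a^{i_m-j}}$ of $a$-conjugates of generators from other families such that these two elements agree modulo $\mathrm{St}_G(2)$, while $c\not\equiv b_{i_1}\cdots b_{i_m}\pmod{G'}$ by \cite[Prop.~3.9]{KT}. One then \emph{recursively} constructs witnesses
\[
t_n\in b_{i_1}\cdots b_{i_m}\,G'\;\cap\;c\,\mathrm{St}_G(n)
\]
for every $n$: given $t_{n-1}$, use the regular branch property over $G'$ to insert $(b_{i_1}\cdots b_{i_m})^{-1}t_{n-1}\in G'$ into a single coordinate at level~$1$, and multiply by the fixed ``level-two'' product above to obtain $t_n$ with $\psi_1(c^{-1}t_n)$ supported in one coordinate equal to $c^{-1}t_{n-1}$. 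Thus two distinct $G'$-cosets are indistinguishable modulo every $\mathrm{St}_G(n)$, so no $\mathrm{St}_G(n)$ is contained in $G'$. This avoids any counting, any appeal to super strong fractality, and any case analysis across the branch subfamilies.
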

\begin{proof}
Let $J=\{j\in \{1,\ldots,p\}\mid r_j\ne 0\}$. We may without loss of generality replace directed generators by multiples of themselves, and by suitable products within their families.
Then, from the condition on the defining vectors, there is some $c\in \mathbf{b}^{(j)}$, for some $j\in J$, that can be expressed as 
\[
c\equiv b_{i_1}^{a^{i_1-j}}b_{i_2}^{a^{i_2-j}}\cdots b_{i_m}^{a^{i_m-j}} \quad \text{mod }\text{St}_G(2),
\]
for some $m\in \mathbb{N}$ and  $b_{i_1}\in \mathbf{b}^{(i_1)},\ldots ,b_{i_m}\in \mathbf{b}^{(i_m)}$ where $i_1,\ldots,i_m$ are pairwise distinct elements in $J\backslash\{j\}$. 

We  show that $G'$ is not a congruence subgroup, by recursively constructing elements
\[
t_n\in b_{i_1}b_{i_2}\cdots b_{i_m} G' \cap c\text{St}_G(n)
\]
for each $n\in \mathbb{N}$. The result then follows by the fact that $c\not \equiv b_{i_1}b_{i_2}\cdots b_{i_m}$ modulo $G'$ (cf. \cite[Prop.~3.9]{KT}).

To begin, suppose that $\psi_1(c)=(a^{e_j},\ldots, a^{e_{p-1}},c,a^{e_1},\ldots,a^{e_{j-1}})$. Then
\[
\psi_1(b_{i_1}^{a^{i_1-j}}b_{i_2}^{a^{i_2-j}}\cdots b_{i_m}^{a^{i_m-j}})=(a^{e_j},\ldots, a^{e_{p-1}},b_{i_1}b_{i_2}\cdots b_{i_m},a^{e_1},\ldots,a^{e_{j-1}}).
\]
We set 
\[
t_1=b_{i_1}b_{i_2}\cdots b_{i_m}
\]
and
\begin{align*}
t_2&=b_{i_1}^{a^{i_1-j}}b_{i_2}^{a^{i_2-j}}\cdots b_{i_m}^{a^{i_m-j}}\\
&=b_{i_1}b_{i_2}\cdots b_{i_m} [b_{i_1},a^{i_1-j}][b_{i_2},a^{i_2-j}]\cdots [b_{i_m},a^{i_m-j}]d
\end{align*}
where $d \in \gamma_3(G)$.
Now suppose $t_{n-1}\in b_{i_1}b_{i_2}\cdots b_{i_m}G'\cap c\,\text{St}_G(n-1)$. Set
\[
x_n:=\psi_1^{-1}((1,\ldots, 1,\underset{(p-j+1)\text{th coordinate}}{\underbrace{(b_{i_1}b_{i_2}\cdots b_{i_m})^{-1}t_{n-1}}},1,\ldots,1))\in G'.
\]
Then
\begin{align*}
t_n&=b_{i_1}^{a^{i_1-j}}b_{i_2}^{a^{i_2-j}}\cdots b_{i_m}^{a^{i_m-j}}x_n\\
&=\psi_1^{-1}((a^{e_j},\ldots, a^{e_{p-1}},t_{n-1},a^{e_1},\ldots,a^{e_{j-1}}))
\end{align*}
and thus
\[
\psi_1^{-1}(c^{-1}t_n)=(1,\overset{p-j}{\ldots},1,c^{-1}t_{n-1},1,\overset{j-1}{\ldots},1).
\]
Since $c^{-1}t_{n-1}\in \text{St}_G(n-1)$, we have  $c^{-1}t_n\in \text{St}_G(n)$, as required.
\end{proof}

We note that these groups also do not have the $p$-congruence subgroup property because the derived subgroup $G'$, which is of $p$-power index, does not contain any level stabiliser. Also, as mentioned in \cite{GUA2}, having the congruence subgroup property or the $p$-congruence subgroup property is independent of the branch action of the group.

We now prove the remaining direction of Theorem \ref{MainTheorem}(A.1) in two steps.

\begin{proposition}\label{pr:st-in-derived}
Let  $G= \langle a,\mathbf{b}^{(1)}, \ldots, \mathbf{b}^{(p)} \rangle \in
  \mathscr{C}$
  be in standard form and such that the defining vectors
  $\mathbf{E}^{(1)}, \ldots, \mathbf{E}^{(p)}$ are linearly independent. Write $r=r_1+\cdots +r_p$. Then $G'\ge \textup{St}_G(r+1)$.
\end{proposition}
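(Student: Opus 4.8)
The plan is to show that $G/G'$ has "enough room" to be seen already at level $r+1$, by a level-by-level counting argument that tracks how the abelianisation quotient decomposes under $\psi_1$. The key structural input is that $G$ is regular branch over $G'$: since $\mathbf{E}^{(1)},\ldots,\mathbf{E}^{(p)}$ are linearly independent, $G\notin\mathscr{G}$ and in fact $\dim\mathbf{V}\ge 1$ together with linear independence places $G$ in $\mathscr{R}$ by Proposition~\ref{classification}, so $\psi_1(\mathrm{St}_G(1)')=G'\times\overset{p}{\cdots}\times G'$ and $\psi_1(G')$ is a subdirect product of $G\times\overset{p}{\cdots}\times G$ by Lemma~\ref{lem:subdirect}(i).

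First I would set up the quotient $Q=G/G'\cong C_p^{\,1+r}$ and, more usefully, work with $\mathrm{St}_G(1)/G'$, which has order $p^r$ (the $a$-direction is killed on passing to the first-level stabiliser). The map $\psi_1$ induces a map $\overline{\psi_1}\colon \mathrm{St}_G(1)/\mathrm{St}_G(1)' \to (G/G')^{\times p}$, and the image of $\mathrm{St}_G(1)/G'$ inside $(G/G')^p$ is a subdirect product whose "diagonal" reduction records exactly the linear relations among the defining data. Concretely, for $b^{(j)}_i\in\mathrm{St}_G(1)$ one has $\psi_1(b^{(j)}_i)\equiv (a^{e^{(j)}_{i,j}},\ldots,b^{(j)}_i,\ldots,a^{e^{(j)}_{i,j-1}})$, so that on the $n$-th coordinate the abelianisation class of $\psi_1(b^{(j)}_i)$ is either $a^{*}G'$ (a known multiple of $a$) or $b^{(j)}_iG'$. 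The linear independence of the $\mathbf{E}^{(j)}$ is precisely the condition guaranteeing that $\mathrm{St}_G(1)\cap(G'\times\overset{p}{\cdots}\times G')$ (i.e. elements of $\mathrm{St}_G(1)$ whose sections are all in $G'$) is contained in $\mathrm{St}_G(1)'$, hence in $G'$ — this is the crux, and it is where Lemma~\ref{lem:no-CSP}'s obstruction is seen to vanish.

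Then I would run the induction on levels. The inductive claim is: $\psi_n^{-1}\big(G'\times\overset{p^n}{\cdots}\times G'\big)\cap\mathrm{St}_G(1) \subseteq G'\cdot\big(\text{stuff one level deeper}\big)$, organised so that after $r+1$ levels nothing remains outside $G'$. The base case is the previous paragraph: an element of $\mathrm{St}_G(1)$ all of whose level-$1$ sections lie in $G'$ is already in $G'$. For the inductive step, given $g\in\mathrm{St}_G(n+1)$, each section $g_x=\varphi_x(g)$ lies in $\mathrm{St}_G(n)$, and by fractality (indeed super strong fractality, Proposition~\ref{pro:super}, since $G\notin\mathscr{G}$) the sections range over all of $G$; one writes $g\equiv \prod (b^{(j)}_i)^{a^{*}}\pmod{G'}$ coordinatewise and pushes the congruence down one level, using $\psi_1(\mathrm{St}_G(1)')=G'\times\overset{p}{\cdots}\times G'$ to absorb the $G'$-error terms. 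Counting: the quotient $\mathrm{St}_G(1)/\mathrm{St}_G(2)$ has order $p^r$ (it is $\overline{\psi_1}$ of the $r$ directed generators modulo the diagonal), and more generally $|\mathrm{St}_G(1):\mathrm{St}_G(1)'|=p^{rp}$ by the wreath-recursion count, so the chain $G'=\mathrm{St}_G(1)'\cdot(\ldots)\supseteq\mathrm{St}_G(2)\supseteq\cdots$ stabilises after exactly $r$ further steps, giving $G'\supseteq\mathrm{St}_G(r+1)$.

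**Main obstacle.** The delicate point is the base case / the heart of the induction: proving that an element of $\mathrm{St}_G(1)$ whose first-level sections all lie in $G'$ must itself lie in $G'$, and doing so quantitatively enough to control how the bound grows with each level. This is exactly where linear independence of $\mathbf{E}^{(1)},\ldots,\mathbf{E}^{(p)}$ must be used in an essential way — it is the converse of the construction in Lemma~\ref{lem:no-CSP}, and one must check that the $a$-exponents appearing in $\psi_1(b^{(j)}_i)$, viewed as a linear functional on $\mathrm{St}_G(1)/G'$, separate points precisely when the defining matrix has full rank. Bookkeeping the exponent $r+1$ (rather than something larger) requires being careful that each descent consumes exactly one dimension of the $p^r$-dimensional "defect" space, which is where the subdirectness from Lemma~\ref{lem:subdirect}(i) does the real work.
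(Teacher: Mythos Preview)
Your overall strategy --- track the image of $\mathrm{St}_G(n)$ in $G/G'$ and show it shrinks to the trivial subgroup by level $r+1$ via a level-by-level descent --- is the same as the approach of \cite[Prop.~6]{AlejJone} that the paper invokes. However, there is a genuine gap in your identification of \emph{where} linear independence enters.

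You claim that linear independence of the $\mathbf{E}^{(j)}$ is ``precisely the condition guaranteeing that $\mathrm{St}_G(1)\cap\psi_1^{-1}\big(G'\times\overset{p}{\cdots}\times G'\big)$ is contained in $\mathrm{St}_G(1)'$''. This is not so: since $\psi_1$ is injective on $\mathrm{St}_G(1)$ and $\psi_1(\mathrm{St}_G(1)')=G'\times\overset{p}{\cdots}\times G'$ whenever $G\in\mathscr{R}$ (Lemma~\ref{assoc-multi-GGS}, Proposition~\ref{twosymmetric}), one has $\psi_1^{-1}\big((G')^p\big)=\mathrm{St}_G(1)'$ automatically, with no hypothesis on the defining vectors beyond branching over~$G'$. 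So your ``base case'' is free, and linear independence cannot be doing its work there. Relatedly, the ``$a$-exponent linear functionals'' you describe are not well-defined on $\mathrm{St}_G(1)/G'$: if $g\equiv g'\pmod{G'}$ then the sections $\varphi_l(g)$ and $\varphi_l(g')$ can differ by an arbitrary element of~$G$ (subdirectness of $\psi_1(G')$), so their $a$-exponents need not agree. These functionals live on $\mathrm{St}_G(1)/\mathrm{St}_G(1)'$, which is a much larger space.

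Where linear independence is actually used is in forcing the descent to be \emph{strict}. For $g\in\mathrm{St}_G(n+1)$, expand $g$ modulo $\mathrm{St}_G(1)'$ as a product of conjugates $\big((b^{(j)}_i)^{a^k}\big)^{\beta_{jik}}$; the condition that each section $\varphi_l(g)$ lies in $\mathrm{St}_G(1)$ gives $p$ linear equations in the $\beta_{jik}$ whose coefficients are exactly the entries of the defining vectors. It is the full rank of the $r\times(p-1)$ matrix of defining vectors that makes these constraints, iterated over $n$ levels, force $\tau(g)=\big(\sum_k\beta_{jik}\big)_{j,i}$ to lie in a subspace of $\mathbb{F}_p^{\,r}$ of codimension at least $n-1$, hence to vanish once $n=r+1$. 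Your inductive step glosses over this mechanism entirely, and the counting claim $\lvert\mathrm{St}_G(1):\mathrm{St}_G(2)\rvert=p^{\,r}$ is not correct in general (already for a non-constant GGS-group with $r=1$ this index depends on the circulant rank of the defining vector, not on~$r$).
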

\begin{proof}
This is in essence the same proof as for \cite[Prop.~6]{AlejJone}.
\end{proof}

\begin{proposition}
Let  $G= \langle a,\mathbf{b}^{(1)}, \ldots, \mathbf{b}^{(p)} \rangle \in
  \mathscr{C}$
  be as in Theorem \ref{MainTheorem}(A) and such that the defining vectors
  $\mathbf{E}^{(1)}, \ldots, \mathbf{E}^{(p)}$ are linearly independent. Then $G$ has the congruence subgroup property.
\end{proposition}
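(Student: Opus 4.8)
The plan is to upgrade Proposition~\ref{pr:st-in-derived} to the full congruence subgroup property by propagating the inclusion $\textup{St}_G(r+1)\le G'$ through the self-similar structure of $G$, in the spirit of the arguments for GGS-groups in \cite{FZ, FAGUA, AlejJone} and for EGS-groups in \cite{Pervova}. Recall first that, by passing to cores, it suffices to show that every finite-index normal subgroup $N$ of $G$ contains some level stabiliser $\textup{St}_G(n)$.

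First I would establish, by induction on $n$, that $\textup{St}_G(n)'\ge \textup{St}_G(n+r+1)$ for all $n\ge 0$. The case $n=0$ is Proposition~\ref{pr:st-in-derived}. Since $G$ is regular branch over $G'$ it lies in $\mathscr{R}$, so by Theorem~\ref{constant} it is not in $\mathscr{G}$, and hence it is super strongly fractal by Proposition~\ref{pro:super}; moreover $\psi_1(\textup{St}_G(1)')=G'\times\overset{p}{\cdots}\times G'$ by Lemma~\ref{assoc-multi-GGS} and Proposition~\ref{twosymmetric}. Combining these with the inclusion $\psi_n(\textup{St}_G(n+r+1))\subseteq \textup{St}_G(r+1)\times\overset{p^n}{\cdots}\times\textup{St}_G(r+1)$ and a routine (but careful) manipulation of the maps $\psi_n$ restricted to subtrees rooted at level $n$ vertices yields the inductive step. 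Taking $n=r+1$ then gives $G''=[\,G',G'\,]\ge \textup{St}_G(r+1)'\ge\textup{St}_G(2r+2)$, so in particular $G''$ is a congruence subgroup.

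Next, because $G$ is regular branch over $G'$, for every level $n$ vertex $u$ one has $\textup{Rist}_G(u)\supseteq \psi_n^{-1}(1\times\cdots\times G'\times\cdots\times 1)$, with $G'$ in the coordinate corresponding to $u$; consequently $\textup{Rist}_G(n)'\supseteq \psi_n^{-1}(G''\times\overset{p^n}{\cdots}\times G'')\supseteq\textup{St}_G(n+2r+2)$ by the previous paragraph. Finally I invoke the standard structural fact for branch groups that every finite-index normal subgroup $N$ of $G$ contains $\textup{Rist}_G(m)'$ for some $m$; applying this with the above gives $\textup{St}_G(m+2r+2)\le\textup{Rist}_G(m)'\le N$, which establishes the congruence subgroup property.

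The technical heart of the argument is the propagation step in the second paragraph — pushing Proposition~\ref{pr:st-in-derived} down to all levels — where both the hypothesis that $\mathbf{E}^{(1)},\dots,\mathbf{E}^{(p)}$ are linearly independent (through Proposition~\ref{pr:st-in-derived}) and the regular-branch identity $\psi_1(\textup{St}_G(1)')=G'\times\overset{p}{\cdots}\times G'$ are used essentially; the remaining ingredients are either already available in the excerpt or are classical facts about branch groups.
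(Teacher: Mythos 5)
The outer frame of your argument is fine: reducing the congruence subgroup property to ``$G''$ contains a level stabiliser'' via Grigorchuk's lemma and the computation $\textup{Rist}_G(m)'\supseteq\psi_m^{-1}(G''\times\cdots\times G'')$ is a correct, if roundabout, substitute for the paper's citation of \cite[Prop.~2.4]{FAGUA}. The problem is the inductive step in your second paragraph, which is where the entire difficulty of the proposition lives and which cannot be carried out from the ingredients you list. The decisive objection is this: every input you use --- Proposition~\ref{pr:st-in-derived}, the identity $\psi_1(\textup{St}_G(1)')=G'\times\overset{p}{\cdots}\times G'$, super strong fractality, and the branch-group generalities of your last paragraph --- holds verbatim for the groups in $\mathscr{E}$ (these have two linearly independent defining vectors and are regular branch over $G'$ by Proposition~\ref{twosymmetric}). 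So if your induction went through, it would show that $G''\ge\textup{St}_G(r+1)'\ge\textup{St}_G(2r+2)$ for $G\in\mathscr{E}$ as well, and hence that those groups have the congruence subgroup property --- contradicting Theorem~\ref{MainTheorem}(B.1), whose proof exhibits $\gamma_3(G)\supseteq G''$ as a non-congruence subgroup. Concretely, passing from level $n$ to level $n+1$ requires not just that the level-one sections of $\textup{St}_G(n+r+2)$ land in $\textup{St}_G(n)'$ (which is easy), but that $\textup{St}_G(n)'\times 1\times\cdots\times 1\subseteq\psi_1(\textup{St}_G(n+1)')$; for small $n$ this branching containment for \emph{iterated} derived subgroups of stabilisers is exactly the obstruction that separates $\mathscr{E}$ from its complement, and no manipulation of $\psi_n$ alone will produce it.

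What is actually needed is Proposition~\ref{key}, namely $\textup{St}_G(1)'\le\gamma_3(G)$ --- this is the one place the hypothesis $G\notin\mathscr{E}$ enters (compare Lemma~\ref{lem:exception}) --- together with Lemma~\ref{second_derived}, $\psi_1(G'')\ge\gamma_3(G)\times\overset{p}{\cdots}\times\gamma_3(G)$. Combining these with $\psi_1(\textup{St}_G(1)')=G'\times\overset{p}{\cdots}\times G'$ gives $\psi_1(\gamma_3(G))\ge G'\times\overset{p}{\cdots}\times G'$, whence
\[
G''\;\ge\;\psi_2^{-1}\big(G'\times\overset{p^2}{\cdots}\times G'\big)\;\ge\;\psi_2^{-1}\big(\textup{St}_G(r+1)\times\overset{p^2}{\cdots}\times\textup{St}_G(r+1)\big)\;=\;\textup{St}_G(r+3),
\]
using Proposition~\ref{pr:st-in-derived} in the last containment. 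This two-level computation replaces your induction entirely; the rest of your argument can then stand.
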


\begin{proof}
By~\cite[Prop.~2.4]{FAGUA}, it suffices to show that $G''$ contains a level stabiliser.

We have from Lemma \ref{second_derived} that
\[
\psi_1(G'')\ge \gamma_3(G)\times \overset{p}{\cdots}\times \gamma_3(G).
\]
From Proposition \ref{key}, we have 
\[
 \psi_1(\gamma_3(G))\ge G'\times \overset{p}{\cdots}\times G'.
\]
Hence 
\begin{align*}
G''&\ge \psi_2^{-1}(G'\times \overset{p^2}{\cdots}\times G')\\
&\ge \psi_2^{-1}(\text{St}_G(r+1)\times \overset{p^{2}}{\cdots}\times \text{St}_G(r+1))\\
&=\text{St}_G(r+3),
\end{align*}
making use of the previous proposition. 
\end{proof}

The above result gives many examples of non-torsion branch groups with the congruence subgroup property. Recall that a multi-EGS group $G$ is non-torsion if at least one of its defining vectors has exponents not summing to zero modulo $p$. Non-torsion branch groups with the congruence subgroup property furthermore are not locally extended residually finite (LERF), (cf. \cite[Ch.~3]{Francoeur}), where a group $G$ is said to be \emph{LERF} (or \emph{subgroup separable}) if every finitely generated subgroup of $G$ is closed in the profinite topology; equivalently, if every finitely generated subgroup is the intersection of finite index subgroups.

\begin{proof}[Proof of Theorem \ref{MainTheorem}(B.1)]
The method here is similar to the proof of Lemma~\ref{lem:no-CSP}. Let $G=\langle a,b^{(j)},  b^{(k)}\rangle$, for some distinct $j, k\in \{1,\ldots,p\}$, with  symmetric defining vectors $(e_1,\ldots,e_{p-1})$ and $(f_1,\ldots,f_{p-1})$ satisfying 
$e_i, f_i\in \{0,1\}$ with $e_i\ne f_i$ for all $1\le i\le p-1$.

We  show that $\gamma_3(G)$ is not a congruence subgroup, by recursively constructing elements
\[
t_n\in [b^{(j)},b^{(k)}] \gamma_3(G) \cap \text{St}_G(n)
\]
for each $n\in \mathbb{N}$.
The result then follows by Lemma~\ref{lem:exception}.

Let $t_1=t_2=[(b^{(j)})^{a^{j-k}},b^{(k)}]$ with
\[
\psi_1([(b^{(j)})^{a^{j-k}},b^{(k)}])=(1,\overset{p-k}\ldots,1,[b^{(j)},b^{(k)}],1,\ldots,1)\in \psi_1(\text{St}_G(2)).
\]

Now suppose $t_{n-1}\in [b^{(j)},b^{(k)}] \gamma_3(G) \cap \text{St}_G(n-1)$. Hence $t_{n-1}[b^{(k)},b^{(j)}]\in\gamma_3(G)$ and there exists $x_n\in \gamma_3(G)$ such that
\[
\psi_1(x_n)=(1,\overset{p-k}\ldots, 1,t_{n-1}[b^{(k)},b^{(j)}],1,\ldots,1).
\]
Then $t_n=x_n[(b^{(j)})^{a^{j-k}},b^{(k)}]$ satisfies
\[
\psi_1(t_n)=(1,\overset{p-k}\ldots, 1,t_{n-1},1,\ldots,1)\in \psi_1(\text{St}_G(n)),
\]
and  $t_n\equiv [b^{(j)},b^{(k)}]$ mod $\gamma_3(G)$, as required.
\end{proof}

Likewise the groups in Theorem~\ref{MainTheorem}(B) do not possess the $p$-congruence subgroup property.


\subsection{Regular branch over $\gamma_3(G)$}

\begin{proposition}
 Let $G$ be as in Theorem~\ref{MainTheorem}(C). Then $\textup{St}_G(5)\le \gamma_3(G)$.
\end{proposition}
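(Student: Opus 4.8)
The plan is to reduce the statement, using the regular branch structure over $\gamma_3(G)$, to an explicit computation in the finite $2$-step nilpotent quotient $\overline G:=G/\gamma_3(G)$, following the strategy used for GGS-groups in \cite{FZ, FAGUA}. First I record the setup. Since $G$ is in the situation of Theorem~\ref{MainTheorem}(C) it is not regular branch over $G'$, so by Proposition~\ref{classification} every defining vector of $G$ is symmetric and the vectors span a $1$-dimensional subspace; moreover $G\notin\mathscr{G}$ (a branch group has no constant defining vector, by Theorem~\ref{constant}) and $G\notin\mathscr{E}\subseteq\mathscr{R}$. Hence Proposition~\ref{key} applies and yields $\textup{St}_G(1)'\le\gamma_3(G)$. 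Two further facts are used throughout. Since $\gamma_3(G)\times\overset{p}{\cdots}\times\gamma_3(G)\subseteq\psi_1(\gamma_3(G))$, iteration gives $\gamma_3(G)\times\overset{p^n}{\cdots}\times\gamma_3(G)\subseteq\psi_n(\gamma_3(G))$ for all $n$, and because $\psi_1$ is injective on $\textup{St}_{\textup{Aut}(T)}(1)$ while $\gamma_3(G)\le G'\le\textup{St}_G(1)$, this gives
\[
(\star)\qquad \psi_n^{-1}\bigl(\gamma_3(G)\times\overset{p^n}{\cdots}\times\gamma_3(G)\bigr)\le\gamma_3(G)\qquad\text{for every }n.
\]
Also, by Lemma~\ref{lem:subdirect}(ii), $\psi_1(\gamma_3(G))$ is a subdirect product of $G\times\overset{p}{\cdots}\times G$.

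Next I would kill the derived subgroups of the level stabilisers. The section at a first-level vertex of an element of $\textup{St}_G(n+1)$ lies in $\textup{St}_G(n)$, and a section of a commutator is the commutator of the corresponding sections, so $\textup{St}_G(n+1)'\le\psi_1^{-1}\bigl(\textup{St}_G(n)'\times\overset{p}{\cdots}\times\textup{St}_G(n)'\bigr)$. Combining this with $\textup{St}_G(1)'\le\gamma_3(G)$ and $(\star)$, an induction gives $\textup{St}_G(n)'\le\gamma_3(G)$ for every $n\ge1$; in particular the image of $\textup{St}_G(2)$ in $\overline G$ is abelian, and since $G'/\gamma_3(G)$ is abelian we also get $G''\le\gamma_3(G)$. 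Thus it remains to close a three-level gap: to prove $\textup{St}_G(5)\le\gamma_3(G)$, equivalently that the image $\overline{\textup{St}_G(5)}$ in the finite class-$2$ group $\overline G$ is trivial.

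For this last stage I would argue as in \cite{FZ, FAGUA}. Because all defining vectors are symmetric and proportional, after replacing the directed generators by suitable powers one has $b^{(j)}\equiv (b^{(k)})^{a^{k-j}}\pmod{\textup{St}_G(2)}$ for all relevant indices $j,k$; hence $G/\textup{St}_G(2)$, and with it both $\textup{St}_G(1)/\textup{St}_G(2)$ and $\textup{St}_G(2)\gamma_3(G)/\gamma_3(G)$, coincide with the corresponding quotients of a single symmetric GGS-group, whose orders are known explicitly. One then traces the descending chain $\overline{\textup{St}_G(2)}\ge\overline{\textup{St}_G(3)}\ge\overline{\textup{St}_G(4)}\ge\overline{\textup{St}_G(5)}$ in $\overline G$: using that $\psi_1$ transfers membership in $\gamma_3(G)$ between a subgroup of $\textup{St}_G(1)$ and the tuple of its first-level sections --- via $(\star)$, the subdirect-product property of $\psi_1(\gamma_3(G))$, and $\gamma_3(G)\times\overset{p}{\cdots}\times\gamma_3(G)\subseteq\psi_1(\gamma_3(G))$ --- each step of this chain is forced to drop strictly inside $\overline G$, and the explicit description of the small quotients above shows that three steps suffice, so that $\overline{\textup{St}_G(5)}=1$, i.e. $\textup{St}_G(5)\le\gamma_3(G)$.

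The genuine obstacle is precisely this last stage, and in particular obtaining the sharp constant $5$ rather than some larger bound: the recursion of the second stage controls only $\textup{St}_G(n)'$, and converting this into a bound on $\textup{St}_G(n)$ itself costs three levels, so one cannot avoid explicit knowledge of the small congruence quotients of $G$ --- equivalently, of $\gamma_3(G)$ and of $\textup{St}_G(1)'$ --- exactly as in the GGS-group treatments \cite{FZ, FAGUA}. Once $\textup{St}_G(5)\le\gamma_3(G)$ is in hand, the congruence subgroup property asserted in Theorem~\ref{MainTheorem}(C) follows by a further, now-standard, argument (producing a level stabiliser inside $G''$ and invoking \cite[Prop.~2.4]{FAGUA}), and hence $\widehat G$ equals the closure of $G$ in $\textup{Aut}(T)$.
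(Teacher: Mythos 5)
Your setup is sound: the hypotheses of Theorem~\ref{MainTheorem}(C) do force $G\notin\mathscr{G}$ and $G\notin\mathscr{E}$, so Proposition~\ref{key} gives $\textup{St}_G(1)'\le\gamma_3(G)$, and your observation $(\star)$ together with the induction $\textup{St}_G(n)'\le\gamma_3(G)$ is correct. But that second stage is tangential, and the third stage --- which is the entire content of the proposition --- is not a proof. You assert that each step of the chain $\overline{\textup{St}_G(2)}\ge\cdots\ge\overline{\textup{St}_G(5)}$ in $\overline G=G/\gamma_3(G)$ "is forced to drop strictly" and that "three steps suffice", but no mechanism forcing a strict drop is given, and nothing in what precedes pins down the orders of these quotients. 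Worse, the proposed reduction to a single symmetric GGS-group is invalid for computations modulo $\gamma_3(G)$: the congruence $b^{(j)}\equiv (b^{(k)})^{a^{k-j}}$ holds only modulo $\textup{St}_G(2)$, whereas by \cite[Prop.~3.9]{KT} the directed generators are already independent modulo $G'$, so $\textup{St}_G(2)\gamma_3(G)/\gamma_3(G)$ does \emph{not} coincide with the corresponding quotient of a GGS-group; the multi-generator case genuinely needs separate treatment (the paper handles it by using Proposition~\ref{key} to split each relevant section into a product of contributions from the two-generator subgroups $G_{kl}=\langle a,b^{(k)},b^{(l)}\rangle$).

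The missing idea is a concrete three-level section analysis. The paper takes $x\in\textup{St}_G(5)$ and passes to sections $x\mapsto h\mapsto g$ with $g\in\textup{St}_G(3)$ and first-level sections $g_i\in\textup{St}_G(2)$; Pervova's description of $\textup{St}_G(2)$ modulo $G'$ gives $g_i\equiv (c^{-1}b)^{l_i}[a,b]^{k_i}[a,c]^{m_i}\pmod{\gamma_3(G)}$, and explicit commutator identities (e.g.\ $\psi_1([b,c])=([a,c],1,\ldots,1,[b,a])$ and its conjugates) reduce $\psi_1(g)$ to a normal form with a single residual exponent $k$ on $[a,b]$. The decisive step, absent from your proposal, is that $k$ must vanish: otherwise one manufactures $\psi_1^{-1}((1,\ldots,1,[a,b]^{k}))\in G$, which would make $G$ regular branch over $G'$, contradicting the standing hypothesis of case (C). The same contradiction argument is applied again one level up (forcing $\alpha_1+\cdots+\alpha_p=0$), and a final identity of the form $\psi_1^{-1}((1,\ldots,1,[b,a][a,c]))\in\gamma_3(G)$ kills the last level. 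Without this repeated use of "not regular branch over $G'$" inside the computation --- you only use that hypothesis once, at the very start, to route around $\mathscr{E}$ --- there is no way to obtain the constant $5$, or indeed any bound.
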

\begin{proof}
We may assume that $G$ is not a GGS-group, as the result already holds by \cite{FAGUA}.
 Suppose $x\in \textup{St}_G(5)$ with $\psi_1(x)=(x_1,\ldots,x_p)$. Working modulo 
 $\gamma_3(G)$, we will show that  $x_s\equiv 1$ for all $1\le s\le p$. 
 
 Fix $s\in \{1,\ldots,p\}$ and write $h=x_s\in \textup{St}_G(4)$. Then $\psi_1(h)=(h_1,\ldots,h_p)$
with  $h_i\in \textup{St}_G(3)$ for all  $1\le i\le p$. 

 Fix $t\in \{1,\ldots,p\}$ and write $g=h_t\in \textup{St}_G(3)$. Then for 
 \[
 \psi_1(g)=(g_1,\ldots, g_p)
 \]
 we have $g_{i}\in \text{St}_G(2)$ for all 
 $1\le i\le p$. 
 
 For notational convenience, we will first prove the result for the EGS-group $G=\langle a,b,c\rangle$ with 
 symmetric defining vector
  $(e_1,\ldots,e_{p-1})$, where  $e_1=1$ (cf. \cite[Lem.~3.1]{KT}) with $b=b^{(1)}$ and $c=b^{(p)}$.
    The proof then generalises 
 easily to the wider class of multi-EGS groups  with a single symmetric defining vector.

 From the proof of \cite[Lem.~3.6]{Pervova} (which also applies in the symmetric defining vector case),
 it follows that 
 \[
  g_{i}\in (c^{-1}b)^{l_{i}}G',
 \]
 for $1\le i\le p$ with $l_i\in \mathbb{Z}$. Further, as $[b,c]\in \gamma_3(G)$, we have
 $1=c^{-p}b^p\equiv  (c^{-1}b)^p$ mod $\gamma_3(G)$. Hence we may assume 
 $l_i\in \mathbb{F}_p$ for $1\le i\le p$. 

 First 
 we note that $G'/\gamma_3(G)$ is normally generated by 
 $[a,b]$ and $[a,c]$. Therefore
 \[
  g_{i}\equiv (c^{-1}b)^{l_{i}}[a,b]^{k_{i}}[a,c]^{m_{i}}
  \quad\text{mod }\gamma_3(G)\\
   \]
 where $m_{i},k_{i}\in \mathbb{F}_p$ with $1\le i\le p$.

  Next observe that 
 \begin{align*}
 \psi_1([b,c])&=([a,c],1,\ldots,1,[b,a])\\
   \psi_1([b,b^{a}])&=([a,b],1,\ldots, 1,[b,a])\\
    \psi_1([b,b^{a}]^a)&=([b,a],[a,b],1,\ldots, 1)\\
  &\,\,\,\vdots \\
    \psi_1([b,b^{a}]^{a^{p-1}})&=(1,\ldots, 1,[b,a], [a,b]),  
 \end{align*}
 and similarly for $\psi_1([c,c^a]^{a^j})$ for $0\le j\le p-1$. Indeed we also have 
 \begin{equation}\label{eq:final}
  \psi_1^{-1}((1,\ldots, 1, [b,a][a,c]))\in \gamma_3(G).
 \end{equation}

 Hence, 
 noting that $G$ is regular branch over $\gamma_3(G)$, we have
\begin{equation}\label{g}
\psi_1(g)\equiv \big((c^{-1}b)^{l_{1}},\, \ldots\,, \, (c^{-1}b)^{l_{p-1}} \, ,\,(c^{-1}b)^{l_{p}}[a,b]^{k} \big)
  \quad\text{mod }\psi_1(\gamma_3(G))
\end{equation}
for $k\in\mathbb{F}_p$. We claim that $k=0$. Indeed,  if $g\in G$ is such that (\ref{g}) holds, then there exists $g'\in G$ such that $\psi_1(g')= \big((c^{-1}b)^{l_{1}},\, \ldots\,, \, (c^{-1}b)^{l_{p-1}} \, ,\,(c^{-1}b)^{l_{p}}[a,b]^{k} \big)$. Then, multiplying $g'$ with 
\[
(c^{-1}b^a)^{l_{1}}(c^{-a}b^{a^2})^{l_{2}}\cdots (c^{-a^{p-1}}b)^{l_{p}} 
\]
gives the element $\psi_1^{-1}((1, \ldots, 1, [a,b]^{k}))\in G$,
which would imply that $G$ is regular branch over $G'$ if $k\ne 0$, a contradiction.
Thus
\begin{align*}
 \psi_1(g)&\equiv \big( (c^{-1}b)^{l_1} ,\ldots,  (c^{-1}b)^{l_p}\big) \quad \text{mod }\psi_1(\gamma_3(G)).
\end{align*}

Then the fact 
\[
 \psi_1^{-1}((c^{-1}b,1,\ldots,1,b^{-1}c))= c^{-1}b^a b^{-1}c^{a^{-1}}\equiv [b,a][a,c]\quad\text{mod }\gamma_3(G)
\]
gives that
\begin{align*}
 g&\equiv \psi_1^{-1}(( 1 ,\ldots,  1, (c^{-1}b)^{l}))[b,a]^{l-l_p}[a,c]^{l-l_p} \quad \text{mod }\gamma_3(G)\\
 &\equiv (c^{-1}b^a)^{la^{-1}} [b,a]^{l-l_p}[a,c]^{l-l_p} \quad \text{mod }\gamma_3(G),
\end{align*}
where $l=l_1+\cdots+l_p$.

Recall that $g=h_t$, for $t\in \{1,\ldots,p\}$. Thus, by (\ref{eq:final}), for some $\alpha_1,\ldots, \alpha_p\in \mathbb{F}_p$,
\begin{align*}
 \psi_1(h)&\equiv \big( (c^{-1}b^a)^{\alpha_1} ,\ldots,(c^{-1}b^a)^{\alpha_p } \big)\quad \text{mod }\psi_1(\gamma_3(G))\\
 &\equiv \big( (c^{-1}b[b,a])^{\alpha_1} ,\ldots,(c^{-1}b[b,a])^{\alpha_p } \big)\quad \text{mod }\psi_1(\gamma_3(G))\\
 &\equiv \big( (c^{-1}b)^{\alpha_1} ,\ldots,(c^{-1}b)^{\alpha_{p-1} },(c^{-1}b)^{\alpha_p } [b,a]^{\alpha_1+\cdots+\alpha_p}\big)\quad \text{mod }\psi_1(\gamma_3(G)).
\end{align*}
In order to avoid getting the same sort of contradiction, we must have $\alpha_1+\cdots+\alpha_p= 0$. Hence
\begin{align*}
 \psi_1(h) &\equiv \big( (c^{-1}b)^{\alpha_1} ,\ldots,(c^{-1}b)^{\alpha_{p-1} },(c^{-1}b)^{\alpha_p } \big)\quad \text{mod }\psi_1(\gamma_3(G))\\
 &\equiv \big( 1,\ldots,1,(c^{-1}b)^{\alpha_1+\cdots+\alpha_p }\big)\psi_1(([b,a][a,c])^{\beta})\quad \text{mod }\psi_1(\gamma_3(G))\\
 &\equiv \psi_1(([b,a][a,c])^{\beta})\quad \text{mod }\psi_1(\gamma_3(G)),
\end{align*}
for some $\beta\in \mathbb{F}_p$.

Recall that $h=x_s$, for $x\in \{1,\ldots,p\}$. Thus,  for some $\beta_1,\ldots, \beta_p\in \mathbb{F}_p$,
\begin{align*}
 \psi_1(x)&\equiv \big( ([b,a][a,c])^{\beta_1} ,\ldots,([b,a][a,c])^{\beta_p} \big)\quad \text{mod }\psi_1(\gamma_3(G))\\
 &\equiv 1\quad \text{mod }\psi_1(\gamma_3(G)).
\end{align*}
Thus we have that $x\in \gamma_3(G)$.

For the general case, when $G$ has more than 2 directed generators, for $g_i\in \text{St}_G(2)$, we have 
\[
g_i\equiv (b^{(j_1)})^{\beta_1}(b^{(j_2)})^{\beta_2}\cdots (b^{(j_r)})^{\beta_r} \quad \text{mod }G',
\]
for some $r\in \mathbb{N}_{\ge 3}$, $j_1,\ldots,j_r\in \{1,\ldots, p\}$ with $j_k\ne j_{k+1}$ for $1\le k\le r-1$, and $\beta_1,\ldots, \beta_r\in \mathbb{F}_p\backslash\{0\}$ such that $\beta_1+\cdots +\beta_r = 0$.

Hence, using Proposition~\ref{key},  for each $1\le i\le p$, we have
\[
 g_{i} \equiv \prod_{k\ne l,\, r_k,r_l\ne 0} g_{i}^{(kl)} \quad \text{mod }\gamma_3(G),
\]
where $g_{i}^{(kl)}\in \text{St}_{G_{kl}}(2)$ for $G_{kl}=\langle a, b^{(k)},b^{(l)}\rangle$. One then applies the above argument for each $g_{i}^{(kl)}$.
\end{proof}

\begin{proof}
[Proof of Theorem \ref{MainTheorem}(C)]
Let $G$ be as specified. 
We have $\gamma_3(G)\ge \text{St}_G(1)'$ by Proposition \ref{key}, and therefore 
$\gamma_4(G)\ge \gamma_3(\text{St}_G(1))=\psi_1^{-1}(\gamma_3(G)\times \overset{p}\cdots \times \gamma_3(G))$. By the previous 
proposition, we see that $\psi_1(\gamma_4(G))\ge \text{St}_G(5)\times \overset{p}\cdots \times \text{St}_G(5)
$.
The result now follows
from the fact that $\psi_1(\gamma_3(G)')\ge \gamma_4(G)\times \overset{p}\cdots \times \gamma_4(G)$ (compare Lemmata~\ref{notderivedsubgroup} and \ref{lem:subdirect}(ii)), making use of~\cite[Prop.~2.4]{FAGUA}.
\end{proof}

\section{The profinite completion}

Now for the above groups $G$ which have the congruence subgroup property, we have that the profinite completion of $G$ coincides 
with the closure of $G$ in $\text{Aut}(T)$. This section describes the profinite completion of the regular branch multi-EGS 
groups without the congruence 
subgroup property; compare Theorem \ref{MainTheorem}(A)  and (B).

Let $n\in \mathbb{N}$. Denote by $K_n$ the subgroup of $G$ satisfying 
$\psi_n(K_n)=G' \times \overset{\,\,\,p^n}{\cdots} \times G'$, and by $M_n$ the subgroup of $G$ satisfying 
$\psi_n(M_n)=\gamma_3(G) \times \overset{\,\,\,p^n}{\cdots} \times \gamma_3(G)$.
Parts (A.2) and (B.2) of Theorem \ref{MainTheorem}  are deduced  from the following:

\begin{theorem}\label{Theorem 3.2}
Let $G$ be a multi-EGS group that is regular branch over $G'$. Then for every non-trivial normal subgroup $N$ in $G$, there is an $n\in\mathbb{N}$ such that $N$ contains $M_n$. Furthermore, if $G\not \in \mathscr{E}$, then $N$ contains $K_n$.
\end{theorem}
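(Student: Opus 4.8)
The plan is to run the standard branch-group argument underlying just-infiniteness (compare Grigorchuk's theorem and its incarnation for EGS-groups in \cite{Pervova}), feeding in Proposition~\ref{Proposition 2.1} as the key input and using Proposition~\ref{key} versus Lemma~\ref{lem:exception} to explain the role of $\mathscr{E}$. Fix $1\ne x\in N$ and set $N_0=\langle x\rangle^G\le N$. Since $G$ is regular branch over $G'$ it is in particular branch, so $G\notin\mathscr{G}$, and Proposition~\ref{Proposition 2.1} yields a vertex $u$, say of level $m$, with $\varphi_u(\text{St}_{N_0}(u))=G$. The goal is to show that $\text{Rist}_{N_0}(u)$ already contains every automorphism supported on $T_u$ whose section at $u$ lies in $\gamma_3(G)$; transporting this around the $m$th layer gives $M_m\le N_0$, and comparing $\gamma_3(G)$ with $G'$ via Proposition~\ref{key} upgrades $M_m$ to $K_{m+1}$ when $G\notin\mathscr{E}$.

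For the first step I would fix $h\in\text{Rist}_G(u)$ and $g\in\text{St}_{N_0}(u)$ and consider $[h,g]$. By normality of $N_0$ in $G$ one has $[h,g]=(g^h)^{-1}g\in N_0$. Since $g$ fixes the vertex $u$, it maps $T_u$ to itself and permutes the remaining $m$th-level subtrees among themselves, while $h$ acts trivially off $T_u$; a direct check on vertices then shows that $[h,g]$ is trivial off $T_u$, so that $[h,g]\in\text{Rist}_{N_0}(u)$ with $\varphi_u([h,g])=[\varphi_u(h),\varphi_u(g)]$. Since $G$ is regular branch over $G'$ we have $G'\times\overset{p}{\cdots}\times G'\subseteq\psi_1(G')$, and pushing this down to level $m$ gives $G'\subseteq\varphi_u(\text{Rist}_G(u))$; combined with $\varphi_u(\text{St}_{N_0}(u))=G$, letting $h$ and $g$ vary yields $\varphi_u(\text{Rist}_{N_0}(u))\supseteq[G',G]=\gamma_3(G)$. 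As $\varphi_u$ is injective on $\text{Rist}_{\text{Aut}(T)}(u)$, this says precisely that $\text{Rist}_{N_0}(u)$ contains every element supported on $T_u$ with section in $\gamma_3(G)$. Now $N_0\trianglelefteq G$, the group $G$ is spherically transitive on the $m$th layer, and multi-EGS groups are self-similar, so all sections of elements of $G$ lie in $G$ and normalise $\gamma_3(G)$; conjugating, the same statement holds with $u$ replaced by any level-$m$ vertex $v$. Multiplying these mutually commuting, disjointly supported subgroups over all such $v$ gives $\psi_m^{-1}(\gamma_3(G)\times\overset{p^m}{\cdots}\times\gamma_3(G))=M_m\le N_0\le N$, which is the first assertion.

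For the second assertion suppose $G\notin\mathscr{E}$; then $G\in\mathscr{C}\setminus(\mathscr{E}\cup\mathscr{G})$, so Proposition~\ref{key} gives $\text{St}_G(1)'\le\gamma_3(G)$, while regular branchness over $G'$ gives $\psi_1(\text{St}_G(1)')=G'\times\overset{p}{\cdots}\times G'$ (the content of Lemmata~\ref{assoc-multi-GGS} and~\ref{twosymmetric}, which by Proposition~\ref{classification} cover every such $G$). Hence $\psi_1(\gamma_3(G))\supseteq G'\times\overset{p}{\cdots}\times G'$, and injectivity of $\psi_1$ on $\text{St}_G(1)$ forces $\gamma_3(G)\supseteq K_1$; unwinding the definitions, this propagates to $M_m\supseteq K_{m+1}$, so $N\supseteq K_{m+1}$. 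I expect the main obstacle to be the bookkeeping in the second paragraph: one must verify carefully that the commutators $[h,g]$ are genuinely supported on $T_u$ only --- which really uses that $g$ fixes the \emph{vertex} $u$, not merely some level stabiliser --- and that the subgroup extracted at $T_u$ transports correctly under normality to the remaining level-$m$ subtrees, where self-similarity is needed. The conceptual point is that the $M_n$-conclusion holds for every $G$ regular branch over $G'$, whereas the $K_n$-conclusion hinges on $\gamma_3(G)\supseteq K_1$, and this fails precisely for $G\in\mathscr{E}$ by Lemma~\ref{lem:exception}.
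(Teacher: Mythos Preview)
Your proposal is correct and follows essentially the same route as the paper's proof: invoke Proposition~\ref{Proposition 2.1} to obtain a vertex $u$ with $\varphi_u(\mathrm{St}_{N_0}(u))=G$, observe that $[\mathrm{St}_{N_0}(u),\mathrm{Rist}_G(u)]\le\mathrm{Rist}_{N_0}(u)$ and that $\varphi_u(\mathrm{Rist}_G(u))\ge G'$ by regular branchness, conclude $\varphi_u(\mathrm{Rist}_{N_0}(u))\ge\gamma_3(G)$, and transport by conjugation to get $M_{|u|}\le N$; then for $G\notin\mathscr{E}$ use Proposition~\ref{key} together with $\psi_1(\mathrm{St}_G(1)')=G'\times\cdots\times G'$ to obtain $\gamma_3(G)\ge K_1$ and hence $K_{|u|+1}\le N$. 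Your write-up is more explicit than the paper's (in particular your verification that $[h,g]$ is supported on $T_u$, and your remark that self-similarity is what makes the conjugation step preserve $\gamma_3(G)$ in each coordinate), but the argument is the same.
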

\begin{proof} It suffices to prove the theorem for normal closures of non-trivial elements $x\in G$.
Hence let $N$ denote the normal closure in $G$ of a non-trivial element $x\in G$.
By Proposition \ref{Proposition 2.1}, there is a vertex $v$ such that $\text{St}_N(v)_v=G$. 
Observe that 
\[
[\text{St}_N(v),\text{Rist}_{G}(v)]\le \text{Rist}_N(v).
\]
Since $G$ is regular branch over $G'$, we have $\text{Rist}_{G}(v)_v\ge G'$.
Hence
\[
\text{Rist}_N(v)_v \ge \varphi_v([\text{St}_N(v),\text{Rist}_{G}(v)])\ge \gamma_3(G),
\]
and $N\ge M_{|v|}$. By conjugation, the first statement follows.

Suppose now that $G\not \in \mathscr{E}$. By Proposition \ref{key}, we have $\gamma_3(G)\ge \text{St}_G(1)'=K_1$. 
Therefore conjugation gives $N\ge K_{|v|+1}$,
as required.
\end{proof}

\begin{proof}[Proof of Corollary~\ref{justinfinite}]
For the groups with the congruence subgroup property, the result follows from \cite[Prop.~2.4]{FAGUA}. For the remaining groups, the result follows from Theorem \ref{Theorem 3.2} and from the fact that $\gamma_3(G)\times \overset{p^n}{\cdots} \times \gamma_3(G)$ has
finite index in $G$ for every $n\in\mathbb{N}$.
\end{proof}

We observe below that all branch multi-EGS groups have the weak congruence subgroup property, that is, every finite-index subgroup contains $\text{St}_G(n)'$, for some $n\in \mathbb{N}$; compare \cite{Segal}.

\begin{corollary}
Let $G\in \mathscr{C}\backslash \mathscr{G}$ be a multi-EGS group. Then $G$ has the weak congruence subgroup property.
\end{corollary}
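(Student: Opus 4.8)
The plan is to derive this from Theorem~\ref{Theorem 3.2} together with the dichotomy established earlier: every branch multi-EGS group is regular branch either over $G'$ or over $\gamma_3(G)$ (and the latter case, by Theorem~\ref{MainTheorem}(C), has the full congruence subgroup property). So I would split into the two cases.

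First, if $G\in\mathscr{C}\setminus\mathscr{G}$ is regular branch over $\gamma_3(G)$ but not over $G'$, then by Theorem~\ref{MainTheorem}(C) it has the congruence subgroup property, hence certainly the weak congruence subgroup property: any finite-index subgroup contains some $\textup{St}_G(n)$, and $\textup{St}_G(n)\ge\textup{St}_G(n)'$. Actually one should be slightly careful here: the weak congruence subgroup property as stated requires containing $\textup{St}_G(n)'$ for some $n$, which is implied by containing $\textup{St}_G(n)$, so this case is immediate. (One also uses implicitly that a branch multi-EGS group not regular branch over $G'$ or $\gamma_3(G)$ does not occur — this is part of the classification discussed in the introduction and Proposition~\ref{classification}; the groups in $\mathscr{G}$ are excluded by hypothesis since they are not branch, cf.\ Theorem~\ref{constant}.)

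Second, and this is the substantive case, suppose $G$ is regular branch over $G'$. Let $H\le G$ have finite index. Then the normal core $N=\bigcap_{g\in G}H^g$ is a non-trivial (since $G$ is just infinite by Corollary~\ref{justinfinite}, or since $H$ has finite index and $G$ is infinite — in any case $N\ne 1$ as $G$ is residually finite and infinite) normal subgroup of finite index, and $N\le H$. By Theorem~\ref{Theorem 3.2}, there is an $n\in\mathbb{N}$ with $M_n\le N\le H$, where $\psi_n(M_n)=\gamma_3(G)\times\overset{p^n}{\cdots}\times\gamma_3(G)$. It therefore suffices to show that $M_n\supseteq\textup{St}_G(m)'$ for some $m$. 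Since $G$ is regular branch over $G'$, Lemma~\ref{second_derived} gives $\psi_1(G'')\ge\gamma_3(G)\times\overset{p}{\cdots}\times\gamma_3(G)$, and iterating (using that $G$ is self-similar and $\gamma_3(G)\supseteq\textup{St}_G(1)'$ when $G\notin\mathscr{E}$, or directly that $\psi_k$ of the $k$-th term of a suitable subnormal series lies in the appropriate product) one obtains $\psi_n(\textup{St}_G(n)'')\le\gamma_3(G)\times\overset{p^n}{\cdots}\times\gamma_3(G)$ — wait, the cleaner route is: $\textup{St}_G(n)$ contains $\psi_n^{-1}(G'\times\cdots\times G')=K_n$ with finite index bounded uniformly, and $\psi_n(K_n')=G''\times\cdots\times G''$, so by Lemma~\ref{second_derived}, $\psi_n(K_n')\ge(\gamma_3(G)\times\overset{p}{\cdots}\times\gamma_3(G))\times\overset{p^n}{\cdots}\times(\cdots)$, i.e.\ $\psi_{n+1}(K_n')\ge\gamma_3(G)\times\overset{p^{n+1}}{\cdots}\times\gamma_3(G)=\psi_{n+1}(M_{n+1})$, giving $\textup{St}_G(n)'\supseteq K_n'\supseteq M_{n+1}$. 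Hmm, the indices need care but the mechanism is exactly Lemma~\ref{second_derived} applied at level $n$.

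The main obstacle I anticipate is purely bookkeeping: matching up the level indices so that $\textup{St}_G(m)'$ is genuinely sandwiched inside $M_n$ for a suitable $m$ depending on $n$. The conceptual content is entirely contained in Theorem~\ref{Theorem 3.2} (every nontrivial normal subgroup contains some $M_n$) and Lemma~\ref{second_derived} ($G''$ maps onto $\gamma_3(G)$ in each coordinate), so no new ideas are needed; one must only verify that $\textup{St}_G(m)'$, which contains $\psi_m^{-1}((G')'\times\cdots)=\psi_m^{-1}(G''\times\overset{p^m}{\cdots}\times G'')$ after passing through $K_m\le\textup{St}_G(m)$, sits below $\psi_m^{-1}(\gamma_3(G)\times\cdots)$ at one level deeper, i.e.\ contains $M_{m+1}$, and conversely choose $m$ so that $M_{m}\subseteq$ the given normal subgroup. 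Combining, $H\supseteq N\supseteq M_n\supseteq\textup{St}_G(n)'$ — reading the containment in the direction actually needed — completes the argument; I would double-check the direction of this last containment against the precise statement $\psi_n(M_n)=\gamma_3(G)\times\cdots$ versus $\psi_n(\textup{St}_G(n)')\le G''\times\cdots\le\gamma_3(G)\times\cdots$, which shows $\textup{St}_G(n)'\subseteq M_n$, hence $H\supseteq M_n\supseteq\textup{St}_G(n)'$ is the wrong way; rather one wants $M_n\subseteq H$ and $M_n\supseteq\textup{St}_G(m)'$, the latter being the delicate point requiring $\textup{St}_G(m)'$ to be \emph{small}, which follows because $\textup{St}_G(m)\supseteq K_m$ has finite index and so $\textup{St}_G(m)'$ need not directly relate — the correct approach is instead to note $M_n\supseteq\textup{St}_G(N)'$ for large $N$ via: $M_n=\psi_n^{-1}(\gamma_3(G)^{p^n})\supseteq\psi_n^{-1}((\textup{St}_G(2)')^{p^n})$ is false too. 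I would resolve this by instead using that $M_n$ itself contains $\psi_{n+2}^{-1}(\text{something})$ via $\gamma_3(G)\supseteq\textup{St}_G(1)'$ (Proposition~\ref{key}, valid off $\mathscr{E}$; for $\mathscr{E}$-groups one uses $\gamma_3(G)\supseteq K_1$ directly), so $M_n\supseteq\psi_{n}^{-1}(\textup{St}_G(1)'\times\cdots)=\psi_{n+1}^{-1}(G'\times\cdots)$... and then apply Lemma~\ref{second_derived} once more to get $\supseteq\textup{St}_G(n+3)'$ or so. This index-chasing is the only real work; everything else is citation.
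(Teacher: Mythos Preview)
Your overall strategy matches the paper's: dispose of the CSP case, then for groups regular branch over $G'$ invoke Theorem~\ref{Theorem 3.2} and show the resulting subgroup contains some $\textup{St}_G(m)'$. For $G\in\mathscr{R}\setminus\mathscr{E}$ your route via $M_n$ and Proposition~\ref{key} does work once the index-chasing is cleaned up: $\gamma_3(G)\ge\textup{St}_G(1)'$ gives $M_n\ge\psi_n^{-1}\big((\textup{St}_G(1)')^{p^n}\big)\ge\textup{St}_G(n+1)'$, since $\psi_n(\textup{St}_G(n+1)')\le(\textup{St}_G(1)')^{p^n}$. The paper is slightly more direct here: it uses the stronger conclusion $N\ge K_n$ available from Theorem~\ref{Theorem 3.2} in the non-$\mathscr{E}$ case, together with the one-line induction $K_n\ge\textup{St}_G(n)'$ (base case $K_1=\textup{St}_G(1)'$), so Proposition~\ref{key} is not needed at this point.

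There is, however, a genuine gap in your treatment of $G\in\mathscr{E}$. You write ``for $\mathscr{E}$-groups one uses $\gamma_3(G)\supseteq K_1$ directly'', but $K_1=\textup{St}_G(1)'$ and Lemma~\ref{lem:exception} establishes precisely that $\textup{St}_G(1)'\not\le\gamma_3(G)$ for $G\in\mathscr{E}$. So your bridge from $M_n$ down to some $\textup{St}_G(m)'$ collapses in exactly this case, and your appeal to Lemma~\ref{second_derived} does not help either: that lemma gives $G''\ge M_1$, which is the wrong direction. The paper's fix is different: for $G\in\mathscr{E}$ the two defining vectors are linearly independent, so Proposition~\ref{pr:st-in-derived} applies and yields $G'\ge\textup{St}_G(3)$; hence $\gamma_3(G)\ge G''\ge\textup{St}_G(3)'$, and therefore $\psi_n(M_n)\ge(\textup{St}_G(3)')^{p^n}\ge\psi_n(\textup{St}_G(n+3)')$, giving $M_n\ge\textup{St}_G(n+3)'$. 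This is the missing ingredient.
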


\begin{proof}
 Indeed, if $G$ has the congruence subgroup property then the result is clear. So we first suppose that $G\notin\mathscr{E}$ is regular branch over $G'$ and suppose $N\trianglelefteq G$ is a normal subgroup of finite index. Then, by the previous result, there exists an $n\in \mathbb{N}$ such that $N\ge K_n$. 
 
 We show by induction that $K_n\ge \text{St}_G(n)'$ for all $n\in \mathbb{N}$. For $n=1$, we in fact have $K_1=\text{St}_G(1)'$. Next, suppose the statement is true for some $n\in \mathbb{N}$. Then 
 \[
     \psi_1(K_{n+1})=K_n\times \overset{p}\cdots\times K_n 
     \ge \text{St}_G(n)'\times \overset{p}\cdots\times\text{St}_G(n)'
     \ge \psi_1(\text{St}_G(n+1)').
 \]
 Hence the result for this case.
 
 Lastly, we suppose that $G\in\mathscr{E}$ and let $N\trianglelefteq G$ is a normal subgroup of finite index. By Theorem~\ref{Theorem 3.2}, there exists an $n\in \mathbb{N}$ such that $N\ge M_n$. Since $\gamma_3(G)\ge G''$, 
 and $G''\ge \text{St}_G(3)'$ by Proposition~\ref{pr:st-in-derived}, we obtain
 \[
 \psi_n(M_n)\ge G''\times \overset{p^n}\cdots \times G'' \ge \text{St}_G(3)' \times \overset{p^n}\cdots\times \text{St}_G(3)'\ge \psi_n(\text{St}_G(n+3)').
 \]
 Hence the result.
\end{proof}


\section{Multi-EGS groups with constant defining vector}

Here we prove that a multi-EGS group $G\in \mathscr{G}$ with constant defining vector is weakly regular branch  
but not branch. The corresponding result for a GGS-group with constant defining vector was proved in \cite{FAGUA}, hence we
 assume in the sequel that $G$ is not 
 of the form $\langle a,b^{(j)}\rangle$, for some $j\in\{1,\ldots,p\}$.

Let $G=\langle a, b^{(1)},\ldots, b^{(p)} \rangle \in \mathscr{G}$ be in standard form. Let $J$ be the set of indices 
$j\in \{1,\ldots,p\}$ such that $r_j\ne 0$. Now we set $K=\langle b^{(j)}a^{-1} \mid j\in J\rangle ^G$.
Note that upon replacing $b^{(j)}$ with a suitable power, we may assume that 
$\psi_1((b^{(j)})^{a^j})=(b^{(j)},a,\ldots,a)$ for all $j\in J$. We write $y^{(j)}_i=(b^{(j)}a^{-1})^{a^i}$ for $j\in J$ and 
$i\in \{0,1,\ldots,p-1\}$.

\begin{lemma}\label{structure}
 Let $G\in \mathscr{G}$ with $K$ as above. Then
 
 \begin{enumerate}
 \item[(i)] $K=\langle y^{(j)}_i \mid 0\le i\le p-1,\, j\in J\rangle$,
 
 \item[(ii)] $G'\le K$ and $|G:K|=p$,
 
 \item[(iii)] $K'\times \overset{p}\cdots \times K' \le \psi_1(K')\le \psi_1(G')\le K\times \overset{p}\cdots \times K$,
 
 \item[(iv)] $K'$ is generated, modulo $\psi_1^{-1}(K'\times \overset{p}\cdots \times K')$, by $[y^{(j)}_i,y^{(k)}_l]^c$ where $0\le i,l\le p-1$, and $j,k\in J$ with $c\in 
 \langle b^{(j)} \mid j\in J\rangle $.
 \end{enumerate}
\end{lemma}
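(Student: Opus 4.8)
The plan is to begin with the two structural parts (ii) and (i). For (ii), from $b^{(j)}a^{-1}\in K$ one gets $b^{(j)}\equiv a\pmod K$ for every $j\in J$, so $G/K=\langle aK\rangle$ is cyclic, hence abelian, which forces $G'\le K$; the index is then read off inside the abelianisation $G/G'\cong C_p^{\,1+|J|}$ of \cite[Prop.~3.9]{KT}, in which $K/G'$ is precisely the subspace spanned by the $|J|$ linearly independent vectors $b^{(j)}G'-aG'$, $j\in J$, whence $|G:K|=p$. For (i), put $L=\langle y^{(j)}_i\mid 0\le i\le p-1,\ j\in J\rangle\le K$; conjugation by $a$ permutes the $y^{(j)}_i$ cyclically in the index $i$, and since $b^{(k)}=y^{(k)}_0a$, conjugation by $b^{(k)}$ is conjugation by $y^{(k)}_0\in L$ followed by conjugation by $a$. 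Thus $L$ is normalised by the generators of $G$, so $L\trianglelefteq G$ and $L=K$.

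Next I would establish the chain in (iii) from the right. The inclusion $\psi_1(K')\le\psi_1(G')$ is immediate from $K\le G$. For $\psi_1(G')\le K\times\overset{p}{\cdots}\times K$, note $G'\le\textup{St}_G(1)$ and that $G'$ is normally generated by the $[a,b^{(j)}]$ and $[b^{(j)},b^{(k)}]$; since $G/K$ is abelian it suffices to check that the $\psi_1$-images of these commutators lie in $K\times\overset{p}{\cdots}\times K$, which is a routine computation from $\psi_1\big((b^{(j)})^{a^j}\big)=(b^{(j)},a,\dots,a)$ and the constancy of the defining vectors — every coordinate that occurs is a power of $a$ or equals $y^{(j)}_0{}^{\pm1}\in K$. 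Because $\psi_1$ is a homomorphism on $\textup{St}_G(1)$, this already gives $\psi_1(G'')=\psi_1(G')'\le K'\times\overset{p}{\cdots}\times K'$, equivalently $G''\le\psi_1^{-1}(K'\times\overset{p}{\cdots}\times K')$; this is the mechanism I will use to absorb error terms.

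Parts (iv) and the remaining inclusion $K'\times\overset{p}{\cdots}\times K'\le\psi_1(K')$ I would prove in tandem. For (iv): expanding $[k_1,k_2]$ with $k_1,k_2\in K$ via $[uv,w]=[u,w]^v[v,w]$ and its mirror, and using (i), writes any element of $K'$ as a product of conjugates $[y^{(j)}_i,y^{(k)}_l]^g$ with $g\in K$; choosing $c=\prod_{m\in J}(b^{(m)})^{\lambda_m}\in\langle b^{(m)}\mid m\in J\rangle$ and $s\in\mathbb Z$ with $g\equiv ca^s\pmod{G'}$ (possible since $gG'\in K/G'$), one rewrites $[y^{(j)}_i,y^{(k)}_l]^g$ as $[y^{(j)}_{i+s},y^{(k)}_{l+s}]^{c}$ up to two successive errors lying in $[K',G']\le G''\le\psi_1^{-1}(K'\times\overset{p}{\cdots}\times K')$, which yields (iv). For the left-hand inclusion of (iii), the key observation (here one uses that $G$ is not $\langle a,b^{(j)}\rangle$, so $|J|\ge2$) is that $(b^{(j)})^{a^j}(b^{(k)})^{-a^k}=y^{(j)}_j(y^{(k)}_k)^{-1}\in K$ has $\psi_1$-image of the form $(\,\cdot\,,1,\dots,1)$, and more generally that suitable products of the $(b^{(m)})^{a^m}$ and their $a$-conjugates have $\psi_1$-images supported on boundedly many coordinates; commutating such elements lands one in $K'$ with section concentrated in the first coordinate, and this, together with (iv) and the coordinate-permuting action of $a$ on $\psi_1(K')$, delivers $K'\times\overset{p}{\cdots}\times K'\le\psi_1(K')$.

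The main obstacle is precisely this last inclusion — the weak branching property $K'\times\overset{p}{\cdots}\times K'\le\psi_1(K')$ — because $K\not\le\textup{St}_G(1)$ and the given generators $b^{(j)}a^{-1}$ of $K$ do not lie in $\textup{St}_G(1)$, so sections cannot be read off directly but must be assembled from products such as $(b^{(j)})^{a^j}(b^{(k)})^{-a^k}$. Carrying this out while only controlling things modulo $\psi_1^{-1}(K'\times\overset{p}{\cdots}\times K')$, and checking that the accumulated error terms genuinely lie in $G''$, is the delicate part; everything else reduces to the commutator identities above together with the base computations of $\psi_1$ on $[a,b^{(j)}]$ and $[b^{(j)},b^{(k)}]$, and the normal-closure and abelianisation arguments of the first paragraph.
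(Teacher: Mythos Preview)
Parts (i), (ii), the right-hand chain in (iii), and (iv) of your plan are correct and essentially match the paper's arguments; your route to $G'\le K$ in (ii) via ``$G/K$ is cyclic hence abelian'' is in fact a little cleaner than the paper's explicit commutator check, and your error-absorption mechanism $[K',G']\le G''\le\psi_1^{-1}(K'\times\overset{p}\cdots\times K')$ for (iv) is exactly what the paper uses.

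The genuine gap is in the left inclusion $K'\times\overset{p}\cdots\times K'\le\psi_1(K')$. Your appeal to (iv) here is circular: (iv) only gives generators of $K'$ modulo $M:=\psi_1^{-1}(K'\times\overset{p}\cdots\times K')$, so passing from $(\sigma,1,\ldots,1)\in\psi_1(K')$ for the generators $\sigma$ to $(\kappa,1,\ldots,1)\in\psi_1(K')$ for arbitrary $\kappa\in K'$ still requires $(\mu,1,\ldots,1)\in\psi_1(K')$ for $\mu\in M$, which is not easier than the original claim. Separately, your highlighted element $(b^{(j)})^{a^j}(b^{(k)})^{-a^k}$ has $\psi_1$-image supported on a \emph{single} coordinate with value $y^{(j)}_0(y^{(k)}_0)^{-1}$; since for the constant defining vector every section of an element of $\textup{St}_G(1)$ is congruent to the \emph{same} power of $a$ modulo $K$, conjugation can only shift all indices simultaneously, so commutating such elements (and their $a$-conjugates) yields at best $[y^{(j)}_i,y^{(k)}_i]$ with equal indices, never the mixed $[y^{(j)}_i,y^{(k)}_l]$ for $i\ne l$ --- and in the case $\lvert J\rvert=2$ your single-coordinate elements are mutually inverse and give nothing at all.

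The paper proceeds directly, without (iv). For $j=k$ it cites \cite{FZ}. For $j\ne k$ it uses the \emph{two}-coordinate element $b^{(j)}(b^{(k)})^{-1}=y^{(j)}_0(y^{(k)}_0)^{-1}\in K$, whose $\psi_1$-image carries $y^{(j)}_0$ at position $p{-}j{+}1$ and $(y^{(k)}_0)^{-1}$ at position $p{-}k{+}1$; conjugating by a suitable $(b^{(j)})^{ia^*}$ shifts both indices to $i$, yielding $g_i$. The decisive trick is then to commutate $g_i$ against $h:=(g_l^{-1})^{a^{k-j}}$: the $a^{k-j}$-shift slides the $y^{(k)}_l$-entry of $g_l^{-1}$ onto position $p{-}j{+}1$, so that $\psi_1([g_i,h])$ has a single nontrivial coordinate equal to $[y^{(j)}_i,y^{(k)}_l]$. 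This shift-and-commutate step, together with the $j=k$ citation, is the missing idea; once you have each basic commutator in one coordinate, normality of $K'$ in $G$ and fractality finish the job without any use of (iv).
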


\begin{proof}
 (i) It suffices to check that $\langle y^{(j)}_i \mid 0\le i\le p-1, \, j\in J\rangle$ is normal in $G$.
 Now this follows from the fact  $
 (y^{(j)}_i)^{b^{(k)}}= (y^{(j)}_{i+1})^{y^{(k)}_1}$ for $0\le i\le p-1$  and $j,k\in J$.

 (ii) First, to show that $G'\le K$, it suffices to observe that $K$ is normal in $G$, and that for $j,k\in J$,
 \[[a,b^{(j)}a^{-1}]=[a,b^{(j)}]^{a^{-1}}\in K\quad \text{and}\quad 
 [a^{-1},b^{(k)}][b^{(k)},b^{(j)}a^{-1}]=[b^{(k)},b^{(j)}]^{a^{-1}}\in K.
 \]
 Hence $|G:K|=|G/G' : K/G'|$ and as $a\equiv b^{(j)}$ modulo $K$ for all $j\in J$, it follows that $|G:K|=p$, as required.

(iii) We observe that for distinct $j, k\in J$,
 \[
  \psi_1([a,b^{(j)}])=(1,\ldots, 1,a^{-1}b^{(j)}, (b^{(j)})^{-1}a,1,\ldots,1)\in K\times \cdots \times K
 \]
 and
 \begin{align*}
  \psi_1(&[b^{(j)},b^{(k)}])\\
  &=(1,\ldots, 1, [b^{(j)},a],1,\ldots,1,[a,b^{(k)}],1,\ldots,1)\in G'\times \cdots \times G'\\
  &\qquad \qquad \qquad \qquad \qquad \qquad \qquad \qquad \qquad \,\,\,\quad\le K\times \cdots \times K.
 \end{align*}
 Since $G'=\langle [a,b^{(j)}],[b^{(j)},b^{(k)}] \mid j ,k\in J,\, j\ne k\rangle^G$ and $K$ is normal in $G$, it then follows that 
 $\psi_1(G')\le K\times \cdots \times K$.
 
 For the remaining inclusion $\psi_1(K')\ge K'\times \cdots \times K'$, it follows from \cite{FZ} that 
 $([y^{(j)}_i,y^{(j)}_l],1,\ldots,1)\in \psi_1(K')$ for $j\in J$ and $0\le i,l\le p-1$. Hence it suffices to show  that
 $([y^{(j)}_i,y^{(k)}_l],1,\ldots,1)\in \psi_1(K')$ for distinct $j, k\in J$ and $0\le i,l\le p-1$. 
 
 Let $j, k\in J$ be distinct elements. Without loss of generality we suppose that $j>k$. We observe that 
 $y^{(j)}_0(y^{(k)}_0)^{-1}=b^{(j)}(b^{(k)})^{-1}\in K$ and 
 \[
 \psi_1(y^{(j)}_0(y^{(k)}_0)^{-1})=(1,\ldots,1,y^{(j)}_0,1,\ldots, 1,
 (y^{(k)}_0)^{-1},1,\ldots,1).
 \]
 For $0\le i\le p-1$, let $g_i$ be a conjugate of $y^{(j)}_0(y^{(k)}_0)^{-1}$ 
 by an appropriate $(b^{(j)})^{ia^*}$ such that 
 $\psi_1(g_i)=(1,\ldots,1,y^{(j)}_i,1,\ldots, 1,
 (y^{(k)}_i)^{-1},1,\ldots,1)$. Then, using   $h:=(g_l^{-1})^{a^{k-j}}$, we obtain
 \[
  \psi_1([g_i,h])=(1,\ldots, 1, [y^{(j)}_i, y^{(k)}_l],1,\ldots,1),
 \]
and the result follows.

(iv) We consider $[y^{(j)}_i, y^{(k)}_l]^g$ for $g\in G$ and write $g=ha^{\alpha}c$ with $h\in G'$ and 
$c\in  \langle b^{(j)} \mid j\in J\rangle $. Thus
\begin{align*}
 [y^{(j)}_i, y^{(k)}_l]^g&= ([y^{(j)}_i, y^{(k)}_l][y^{(j)}_i, y^{(k)}_l,h])^{a^{\alpha}c}\\
 &\equiv [y^{(j)}_i, y^{(k)}_l]^{a^{\alpha}c}\quad \text{mod }\psi_1^{-1}(K'\times \cdots \times K')\\
 &= [y^{(j)}_{i+\alpha}, y^{(k)}_{l+\alpha}]^{c},
\end{align*}
where the equivalence come from the fact $\psi_1(G'')\le K'\times \cdots \times K'$.
\end{proof}

In particular, Lemma \ref{structure}(iii) shows that $G$ is weakly regular branch over $K'$.

\smallskip

Let $j\in J$. For $G_j=\langle a,b^{(j)}\rangle$ we set $K_j=\langle b^{(j)}a^{-1}\rangle ^{G_j}$.

\begin{lemma}\label{Kcommutator}
Let $j\in J$. Then $K'\cap K_j=K_j'$.
\end{lemma}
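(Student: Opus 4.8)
The inclusion $K_j'\subseteq K'\cap K_j$ is immediate: since $b^{(j)}a^{-1}\in K$ and $K\trianglelefteq G$, the group $K$ contains every $G_j$-conjugate of $b^{(j)}a^{-1}$, hence $K_j\le K$; thus $K_j'\le K'$, and trivially $K_j'\le K_j$. The content of the statement is the reverse inclusion, and I would reformulate it as the assertion that the natural surjection $K_j/K_j'\to K_jK'/K'\le K/K'$ is injective.

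To prove this I would exploit the module structure on $V:=K/K'$. Since $K\trianglelefteq G$, conjugation makes $\langle a\rangle$ act on $V$, so $V$ is a module over $\mathbb{Z}[\langle a\rangle]$, and by Lemma~\ref{structure}(i) together with $(y_0^{(k)})^{a^{i}}=y_i^{(k)}$ it is generated, as a module, by the classes $\overline{y_0^{(k)}}$ with $k\in J$; in particular the image of $K_j$ in $V$ is the cyclic submodule $\mathbb{Z}[\langle a\rangle]\,\overline{y_0^{(j)}}$. After the normalisation $\psi_1((b^{(j)})^{a^{j}})=(b^{(j)},a,\dots,a)$ fixed just before Lemma~\ref{structure}, expanding the relation $1=(b^{(j)})^{p}=(y_0^{(j)}a)^{p}$ gives $y_0^{(j)}y_{p-1}^{(j)}\cdots y_1^{(j)}=1$, whence
\[
(1+a+\cdots+a^{p-1})\,\overline{y_0^{(j)}}=\overline{y_0^{(j)}}+\overline{y_1^{(j)}}+\cdots+\overline{y_{p-1}^{(j)}}=0 \quad\text{in }V.
\]
Consequently the image of $K_j$ in $V$ is a quotient of $M:=\mathbb{Z}[\langle a\rangle]/(1+a+\cdots+a^{p-1})\cong\mathbb{Z}[\zeta_p]$, via the map $M\to V$ sending $1$ to $\overline{y_0^{(j)}}$.

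Now $K_j=\langle b^{(j)}a^{-1}\rangle^{G_j}$ is precisely the subgroup that plays the role of $K$ for the GGS-group $G_j$ with constant defining vector, whose structure is analysed in \cite{FAGUA}. The plan is to combine three ingredients: the upper bound just obtained; the description of $K_j/K_j'$ read off from \cite{FAGUA} (in particular that the canonical epimorphism $M\to K_j/K_j'$, $1\mapsto\overline{b^{(j)}a^{-1}}$, is an isomorphism, equivalently $K_j/K_j'\cong\mathbb{Z}[\zeta_p]$ with $\langle a\rangle$ acting in the evident way); and a descent under $\psi_1$ — computing $\psi_1$ of a hypothetical additional relator $\prod_i(y_i^{(j)})^{c_i}\in K'$ and invoking the sandwich $K'\times\overset{p}{\cdots}\times K'\le\psi_1(K')\le K\times\overset{p}{\cdots}\times K$ of Lemma~\ref{structure}(iii) together with the generating set of $K'$ modulo $\psi_1^{-1}(K'\times\overset{p}{\cdots}\times K')$ from Lemma~\ref{structure}(iv), so as to reduce to the corresponding statement for the single GGS-group $G_j$. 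Since both epimorphisms $M\to K_j/K_j'$ and $M\to(\text{image of }K_j\text{ in }V)$ send $1$ to the class of $b^{(j)}a^{-1}$ and the first is an isomorphism, injectivity of $K_j/K_j'\to V$ is equivalent to the assertion that no relation among $\overline{y_0^{(j)}},\dots,\overline{y_{p-1}^{(j)}}$ holds in $K/K'$ beyond the norm relation already exhibited. Establishing exactly this — that passing from $K_j$ to the larger group $K$ creates no new relations among these classes — is the step I expect to require the most care; everything else is bookkeeping with the module structure on $K/K'$ and the cited facts about $G_j$. Granting it, $K_j/K_j'\to K/K'$ is injective and therefore $K'\cap K_j=K_j'$.
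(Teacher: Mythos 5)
The easy inclusion and the module-theoretic framing are both fine: $K_j\le K$ since $K\trianglelefteq G$ contains $b^{(j)}a^{-1}$, and the reverse inclusion is indeed equivalent to injectivity of $K_j/K_j'\to K/K'$; your norm relation $(1+a+\cdots+a^{p-1})\,\overline{y_0^{(j)}}=0$ is correctly derived from $(b^{(j)})^p=1$. But the proposal stops exactly at the point where the lemma actually lives. The entire content of the statement is that passing from $K_j$ to $K$ introduces no new relations among $\overline{y_0^{(j)}},\dots,\overline{y_{p-1}^{(j)}}$, and for that you offer only a programme (``a descent under $\psi_1$ \dots\ so as to reduce to the corresponding statement for $G_j$'') which you explicitly flag as the step requiring the most care and then grant. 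As written this is a reduction of the lemma to an unproved claim of essentially the same difficulty, not a proof; moreover the descent you sketch is delicate, because Lemma~\ref{structure}(iv) only controls $K'$ modulo $\psi_1^{-1}(K'\times\cdots\times K')$, so one has to recurse through all levels of the tree, and it is not clear the bookkeeping closes up. (A smaller point: you also lean on $M\to K_j/K_j'$ being an isomorphism, whereas what is actually cited in this paper from \cite{FAGUA} is only that $K_j/K_j'$ is torsion-free.)

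The paper's own argument avoids all of this with a short support/section observation. Take $x\in K'\cap K_j$ and write $x$ as a product of conjugates of the commutators $[y_i^{(m)},y_l^{(k)}]$ with $m,k\in J$. If $x\notin K_j'$, this expression must genuinely involve some $y_i^{(m)}$ with $m\ne j$, and then by \cite[Lem.~3.10]{KT} some section of $x$ at a vertex $u$ equals $b^{(m)}$. But $K_j\le G_j=\langle a,b^{(j)}\rangle$ is self-similar in the sense that every section of every element of $K_j$ lies in $G_j$, and $b^{(m)}\notin G_j$ --- contradiction. So the missing ``no new relations'' step is exactly what the section argument supplies, and it does so without any module computation. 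If you want to salvage your approach, the cleanest fix is to replace the proposed $\psi_1$-descent by this section/support argument.
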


\begin{proof}
As $x$ is in $K'$, this implies that we can write $x$ as a product of conjugates of the commutators $[y^{(m)}_i,y^{(k)}_l]$, 
where $0\le i,l\le p-1$ and $m,k\in J$. If this product involves  elements $y^{(m)}_i$ for $m\ne j$ (in other words, 
if $x\not \in K_j'$), then by~\cite[Lem.~3.10]{KT} there exists a vertex $u$ such that the section of $x$ at $u$ is $b^{(m)}$. However, this element cannot then be in $K_j$. Hence the result.
\end{proof}

\begin{lemma}\label{order}
The elements $y^{(j)}_i$, for $j\in J$ and $0\le i\le p-1$, are of infinite order in $G$ and also in $K/K'$.
\end{lemma}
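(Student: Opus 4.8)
The plan is to reduce the statement to the case of GGS-groups with constant defining vector, where the relevant structural facts are available from \cite{FAGUA}, and then to exploit the cyclic $\langle a\rangle$-symmetry among the $y^{(j)}_i$.

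Fix $j\in J$. Since $y^{(j)}_i=(b^{(j)}a^{-1})^{a^i}$ lies in $K_j\le K$ for every $i$, and $K'\cap K_j=K_j'$ by Lemma~\ref{Kcommutator}, the inclusion $K_j\hookrightarrow K$ induces an injection $K_j/K_j'\hookrightarrow K/K'$. It therefore suffices to prove that each $y^{(j)}_i$ has infinite order in $K_j/K_j'$; the assertion for $G$ then follows a fortiori, since an element whose image in $K/K'$ has infinite order already has infinite order in $K\le G$.

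So I would work inside the GGS-group $G_j=\langle a,b^{(j)}\rangle$ with constant defining vector. Recall from \cite{FAGUA} (cf.\ also Lemma~\ref{structure}(ii),(iii)) that $G_j$ is weakly regular branch over $K_j'$, that $G_j$ is \emph{not} branch, and that $|G_j:K_j|=p$. I claim $K_j/K_j'$ is infinite. Indeed, if $|K_j:K_j'|<\infty$, then either $K_j'=1$, in which case $K_j$ would be a finitely generated infinite abelian group (it has index $p$ in the infinite group $G_j$) that is also finite, which is absurd; or $K_j'\ne 1$, in which case $G_j$ would be weakly regular branch over the nontrivial subgroup $K_j'\le\mathrm{St}_{G_j}(1)$ of finite index, hence regular branch over $K_j'$, hence branch, contradicting \cite{FAGUA}. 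Thus $K_j/K_j'$ is an infinite finitely generated abelian group, generated by the images $\bar y^{(j)}_0,\dots,\bar y^{(j)}_{p-1}$ (cf.\ Lemma~\ref{structure}(i)), so it has positive torsion-free rank. Now conjugation by $a$ normalises $K_j$, hence induces an automorphism of $K_j/K_j'$ that cyclically permutes $\bar y^{(j)}_0,\dots,\bar y^{(j)}_{p-1}$, and in turn an automorphism of the free abelian quotient of $K_j/K_j'$ by its (finite, characteristic) torsion subgroup that cyclically permutes the images of the $\bar y^{(j)}_i$. Since these images generate that quotient, which is nonzero, none of them is zero; hence every $\bar y^{(j)}_i$ has infinite order in $K_j/K_j'$. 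Running this over all $j\in J$ completes the proof.

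The step I expect to require the most care is the reduction just described rather than any calculation: one must make sure that ``weakly regular branch over $K_j'$'', ``not branch'', and $|G_j:K_j|=p$ are all legitimately imported from the GGS setting, and the degenerate possibility $K_j'=1$ must be excluded separately. As a cross-check on the $G$-statement alone, one can instead verify directly that $\psi_1\big((b^{(j)}a^{-1})^{p}\big)$ has coordinates a permutation of $y^{(j)}_0,\dots,y^{(j)}_{p-1}$; since these elements are all conjugate and $G$ is residually-(finite $p$), the usual order-doubling argument then gives infinite order in $G$ at once, although it does not by itself settle the statement in $K/K'$.
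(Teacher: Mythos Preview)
Your proof is correct and shares the paper's key reduction: both use $K'\cap K_j=K_j'$ (Lemma~\ref{Kcommutator}) to pass from $K/K'$ to $K_j/K_j'$. The difference lies only in the endgame inside $K_j/K_j'$. The paper simply invokes \cite[Prop.~3.4]{FAGUA}, which asserts that $K_j/K_j'$ is torsion-free, and the conclusion is immediate. You instead establish only that $K_j/K_j'$ is infinite (via weakly regular branch over $K_j'$ together with ``$G_j$ not branch'' from \cite{FAGUA}) and then use the cyclic $\langle a\rangle$-action permuting the generators $\bar y^{(j)}_0,\ldots,\bar y^{(j)}_{p-1}$ to deduce that none of them is torsion. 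Your route trades one specific structural import from \cite{FAGUA} (torsion-freeness of $K_j/K_j'$) for two more qualitative ones and a short symmetry argument; the paper's path is shorter, but yours shows that the full torsion-free statement is not strictly needed for this lemma.
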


\begin{proof}
 The first part follows as in \cite[Lem.~3.3]{FAGUA}. For the second, note that if $(y^{(j)}_i)^n\in K'$ for some 
 $n\in\mathbb{N}$ then $(y^{(j)}_i)^n\in K'\cap K_j=K_j'$ by Lemma~\ref{Kcommutator}. The result now follows from the fact that $K_j/K_j'$ is 
 torsion-free (cf. \cite[Prop.~3.4]{FAGUA}).
\end{proof}

\begin{lemma}\label{star}
 Let $j\in J$. For every $g\in K_j$ we have $gg^a\cdots g^{a^{p-1}}\in K_j'$. Furthermore, if $h\in K_j'$ with 
 $\psi_1(h)=(h_1,\ldots,h_p)$, then $h_p\cdots h_1\in K_j'$.
\end{lemma}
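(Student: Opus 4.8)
The plan is to reduce everything to the GGS-group $G_j=\langle a,b^{(j)}\rangle$ and then exploit the recursive structure of $K_j$ together with the abelianisation maps. First, recall that (after replacing $b^{(j)}$ by a power, as arranged before Lemma~\ref{structure}) we have $\psi_1\big((b^{(j)})^{a^j}\big)=(b^{(j)},a,\ldots,a)$, and $y^{(j)}_i=(b^{(j)}a^{-1})^{a^i}$. Since $K_j=\langle b^{(j)}a^{-1}\rangle^{G_j}$, part (i) of Lemma~\ref{structure} (applied with $J=\{j\}$) gives $K_j=\langle y^{(j)}_0,\ldots,y^{(j)}_{p-1}\rangle$, and by Lemma~\ref{structure}(ii) $K_j$ has index $p$ in $G_j$ with $G_j'\le K_j$; in particular $K_j/K_j'$ is generated by the images of the $y^{(j)}_i$. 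For the first assertion I would argue on $K_j/K_j'$, which is abelian. Write $\pi\colon K_j\to K_j/K_j'$. Because $K_j\trianglelefteq G_j$ with $G_j/K_j=\langle \bar a\rangle$ of order $p$, conjugation by $a$ induces an order-dividing-$p$ automorphism $\sigma$ of $K_j/K_j'$, and $\pi(g)\pi(g^a)\cdots\pi(g^{a^{p-1}})=(1+\sigma+\cdots+\sigma^{p-1})\pi(g)$. So the claim $gg^a\cdots g^{a^{p-1}}\in K_j'$ is exactly the statement that the norm element $1+\sigma+\cdots+\sigma^{p-1}$ annihilates $K_j/K_j'$. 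It suffices to check this on the generators $\pi(y^{(j)}_i)$; but $\sigma$ permutes these generators cyclically, $\sigma\big(\pi(y^{(j)}_i)\big)=\pi(y^{(j)}_{i+1})$ (indices mod $p$), so $(1+\sigma+\cdots+\sigma^{p-1})\pi(y^{(j)}_i)=\sum_{k=0}^{p-1}\pi(y^{(j)}_k)$ is independent of $i$; hence it suffices to show this single element, call it $w:=y^{(j)}_0y^{(j)}_1\cdots y^{(j)}_{p-1}$, lies in $K_j'$. Now $w$ telescopes: $y^{(j)}_0y^{(j)}_1\cdots y^{(j)}_{p-1}=(b^{(j)}a^{-1})(b^{(j)}a^{-1})^a\cdots(b^{(j)}a^{-1})^{a^{p-1}}$, and a direct computation (or using $b^{(j)}a^{-1}\in\mathrm{St}(1)$-coset bookkeeping) shows $w$ equals $b^{(j)}\cdot(b^{(j)})^{a}\cdots(b^{(j)})^{a^{p-1}}\cdot a^{-p}=b^{(j)}(b^{(j)})^{a}\cdots(b^{(j)})^{a^{p-1}}$ modulo the relation $a^p=1$, times a correction in $G_j'$; either way $w\equiv \big(b^{(j)}\big)^{1+a+\cdots+a^{p-1}}\bmod G_j'$. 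Applying $\psi_1$ to $b^{(j)}(b^{(j)})^a\cdots(b^{(j)})^{a^{p-1}}$ and using that the defining vector is constant $(1,\ldots,1)$, one sees each coordinate is $a^{p-1}\cdot b^{(j)}$ up to a $G_j$-commutator, which forces $w\in G_j'\cap K_j=K_j'$ — here I use $G_j'\le K_j$ from Lemma~\ref{structure}(ii) and, more precisely, that the image of $w$ in $G_j/G_j'\cong C_p\times C_p$ (Proposition~\cite[3.9]{KT}) is trivial since the total $a$-exponent and $b^{(j)}$-exponent of $w$ are each $p\equiv 0$. I expect the cleanest route is actually just this abelianisation count in $G_j/G_j'$: $w$ has $a$-exponent $-p\equiv 0$ and $b^{(j)}$-exponent $p\equiv 0$, so $w\in G_j'$; and $w\in K_j$ by construction, so $w\in G_j'\cap K_j$, and then Lemma~\ref{Kcommutator} (with the roles adjusted: $G_j'\cap K_j$... ) — more carefully, one checks $G_j'\cap K_j=K_j'$ directly from $|K_j/K_j'|$ being the ``obvious'' free abelian quotient, i.e. $K_j/K_j'\cong\mathbb{Z}^p$ modulo the relation $\prod y^{(j)}_i\in K_j'$... which is circular, so instead I use the Fabrykowski--Zorn-type computation of $G_j/G_j'$ and $K_j$ from \cite{FAGUA,FZ}. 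Thus the first statement holds.

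For the second statement, let $h\in K_j'$ with $\psi_1(h)=(h_1,\ldots,h_p)$. By Lemma~\ref{structure}(iii) applied to $G_j$ (i.e. with $J=\{j\}$), $\psi_1(K_j')\le K_j\times\cdots\times K_j$, so each $h_i\in K_j$. The element $h$ is a product of $G_j$-conjugates of commutators $[y^{(j)}_i,y^{(j)}_l]$; it suffices to prove $h_p h_{p-1}\cdots h_1\in K_j'$ for $h$ of this form, since the map $h\mapsto h_ph_{p-1}\cdots h_1$ descends to a homomorphism $\psi_1(K_j')\to K_j/K_j'$ (this needs checking: the ``reversed product of coordinates'' map $K_j^{\,p}\to K_j/K_j'$, $(h_1,\ldots,h_p)\mapsto \overline{h_p\cdots h_1}$, is a homomorphism because the target is abelian, and it is well defined on $\psi_1(K_j')$ since $\psi_1$ is injective on $\mathrm{St}(1)$). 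For a generator $[y^{(j)}_i,y^{(j)}_l]$ with $i\ne l$, one computes $\psi_1$ explicitly using $\psi_1(y^{(j)}_i)$: each $y^{(j)}_i$ is a directed automorphism whose section at exactly one vertex of level $1$ is $b^{(j)}$ and at the others a power of $a$, with the powers summing (across all $p$ coordinates, for the constant vector) in a controlled way. The commutator $[y^{(j)}_i,y^{(j)}_l]$ then has coordinates that are, in all but two positions, trivial or a commutator $[a^*,a^*]=1$, and in the two relevant positions are $[b^{(j)},a^*]$ and $[a^*,b^{(j)}]$ (inverse to each other in $G_j/G_j'$). Hence the reversed product of the coordinates of $[y^{(j)}_i,y^{(j)}_l]$ is trivial modulo $G_j'\cap K_j=K_j'$. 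Since conjugating $h$ by $a$ cyclically shifts the coordinates and conjugating by $b^{(j)}$ alters each coordinate within its $K_j$-coset by a commutator, neither operation changes the reversed coordinate product modulo $K_j'$, so the claim extends from generators to all of $K_j'$.

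I expect the main obstacle to be making the ``coordinate product is trivial mod $K_j'$'' computations clean: the naive product of all $p$ coordinates of a directed automorphism like $y^{(j)}_i$ is not in $K_j$, so one must track the $a$-exponents carefully and use the constant defining vector to see the exponents telescope. The second, more technical point is justifying that $G_j'\cap K_j=K_j'$ and that $K_j/K_j'$ is the expected abelian group — this is where I would lean on \cite[Prop.~3.4]{FAGUA} and \cite{FZ} (as already invoked in Lemma~\ref{order}) rather than reproving it. Everything else is routine verification using $\psi_1$ and the fact, from Lemma~\ref{structure}, that $K_j'\times\cdots\times K_j'\le\psi_1(K_j')$ and $\psi_1(K_j')\le K_j\times\cdots\times K_j$.
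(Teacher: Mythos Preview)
The paper's proof is a one-line citation: ``This is just \cite[Lem.~4.3 and 4.4]{FZ}.'' So the result is imported wholesale from the GGS literature; there is nothing to compare at the level of argument.

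Your attempt at an independent proof has a genuine error. You repeatedly invoke the identity $G_j'\cap K_j=K_j'$, but this is false: by Lemma~\ref{structure}(ii) (applied with $J=\{j\}$) we have $G_j'\le K_j$, so $G_j'\cap K_j=G_j'$, and $G_j'\ne K_j'$ because $K_j/K_j'$ is infinite (Lemma~\ref{order}) while $K_j/G_j'$ is a subgroup of $G_j/G_j'\cong C_p\times C_p$ and hence finite. Consequently, showing that your element $w=y^{(j)}_0\cdots y^{(j)}_{p-1}$ lies in $G_j'$ (which is easy from exponent counts) does \emph{not} place it in $K_j'$; the whole difficulty of the first statement is precisely to bridge this gap, and that is the content of \cite[Lem.~4.3]{FZ}. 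The same issue recurs in your treatment of the second statement: landing the reversed coordinate product in $G_j'$ is not enough. You eventually acknowledge that you would ``lean on \cite[Prop.~3.4]{FAGUA} and \cite{FZ}'' to get the structure of $K_j/K_j'$ --- but once you have those references in hand, the lemma is already proved there, which is exactly what the paper does.
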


\begin{proof}
 This is just \cite[Lem.~4.3 and 4.4]{FZ}.
\end{proof}

\begin{proof}[Proof of Theorem \ref{constant}]
 We proceed as in~\cite[Thm.~3.7]{FAGUA}. Let 
 \[
 L=\psi_1^{-1}(K'\times \overset{p}\cdots \times K').
 \]
 By Lemma \ref{structure}(iii), we have $L\subseteq \text{Rist}_{G'}(1)$. 
 We prove that equality holds by considering $g\in \text{Rist}_{G'}(x)$ for $x\in X$ and showing that $g\in L$. 
 From the definition of the rigid stabiliser of a vertex, all components of $\psi_1(g)$ are trivial, except possibly the 
 component corresponding to position $x$, say $h$. We observe that $h\in K$, since $\psi_1(G')\subseteq K\times \overset{p}\cdots 
 \times K$ by Lemma \ref{structure}(iii). As $g\in K$, we may write $g$ as $(\prod_{j\in J}g^{(j)})\cdot k$ with 
 $g^{(j)}\in K_j$ and $k\in K'$. Likewise, writing $\tilde{g}=gg^a\cdots g^{a^{p-1}}$, we obtain
 \[
  \tilde{g}=\big(\prod_{j\in J} \tilde{g}^{(j)} \big)\cdot \tilde{k}
 \]
for $\tilde{k}\in K'$ and $\tilde{g}^{(j)}=g^{(j)}(g^{(j)})^a\cdots (g^{(j)})^{a^{p-1}}$. From Lemma \ref{star}, we have 
$\tilde{g}^{(j)}\in K_j'\le K'$ for all $j\in J$, hence $\tilde{g}\in K'$. Now $\psi_1(\tilde{g})=(h,\ldots,h)$ and we may write
$h=(\prod_{j\in J}h^{(j)})\cdot l$ for $l\in K'$ and $h^{(j)}\in K_j$. Thus
\[
\psi_1(\tilde{g})=(h,\ldots,h)\equiv \prod_{j\in J}(h^{(j)},\ldots,h^{(j)})\quad \text{mod }K'\times \overset{p}\cdots \times K'.
\]
For each $j\in J$, write $k^{(j)}$ for $\psi_1^{-1}(h^{(j)},\ldots,h^{(j)})$. Clearly $k^{(j)}\in K'\cap K_j=K_j'$. From \cite[Prop.~3.4]{FAGUA}, we have that $K_j/K_j'$ is torsion-free and hence, taking the second part of Lemma~\ref{star} into account, it follows that $h^{(j)}\in K_j'\le K'$ 
for all $j\in J$. Thus $\psi_1(g)\in K'\times \overset{p}\cdots \times K'$ and $g\in L$, as required.

Now suppose on the contrary that $G$ is a branch group. Then $|G:\text{Rist}_G(1)|$ is finite and by 
\cite[Lem.~3.6]{FAGUA}, we have $|G':\text{Rist}_{G'}(1)|=|G':L|$ is also finite. As $L\le K'$, it follows that $G/K'$ 
is finite. However Lemma \ref{order} implies that $K/K'$ is infinite, a contradiction.
\end{proof}


\end{document}